\pgfplotsset{compat=1.15}
\newtheorem{theorem}{Theorem}[section]
\newtheorem{corollary}[theorem]{Corollary}
\newtheorem{definition}[theorem]{Definition}
\newtheorem{lemma}[theorem]{Lemma}
\newtheorem{proposition}[theorem]{Proposition}
\newtheorem{remark}[theorem]{Remark}
\newtheorem{corollary-definition}[theorem]{Corollary-Definition}
\numberwithin{equation}{section}
\numberwithin{figure}{section}
\newcommand\diag[4]{%
 \multicolumn{1}{p{#2}|}{\hskip-\tabcolsep
 $\vcenter{\begin{tikzpicture}[baseline=0, anchor=south west, inner sep=#1]
 \path[use as bounding box] (0, 0) rectangle (#2+2\tabcolsep, \baselineskip);
 \node[minimum width={#2+2\tabcolsep-\pgflinewidth}, 
 minimum height=\baselineskip+\extrarowheight-\pgflinewidth] (box) {};
 \draw[line cap=round] (box.north west) -- (box.south east);
 \draw[line cap=round] (box.north west) -- (box.south west);
 \node[anchor=south west] at (box.south west) {#3};
 \node[anchor=north east] at (box.north east) {#4};
 \end{tikzpicture}}$\hskip-\tabcolsep}}
\newcommand{\Bgrouppos}{{\Bgroup}^{+}}
\DeclareMathOperator{\Fl}{Fl}
\DeclareMathOperator{\Gl}{Gl}
\DeclareMathOperator{\UT}{UT}
\DeclareMathOperator{\Bgroup}{B}
\DeclareMathOperator{\Sq}{Sq}
\DeclareMathOperator{\image}{im}
\DeclareMathOperator{\coker}{coker}
\DeclareMathOperator{\Stab}{Stab}
\DeclareMathAlphabet{\altmathcal}{OMS}{cmsy}{m}{n}
\makeatletter \renewcommand{\fnum@figure}{Fig. \thefigure} \makeatother
\newcommand{\BComment}[1]{ \color{blue} { #1} \color{black}}
\begin{document}
\title{The mod-2 cohomology groups of low-dimensional unordered flag manifolds and Auerbach bases}
\author{Lorenzo Guerra \and Santanil Jana \and Arun Maiti}
\providecommand{\keywords}[1]{\textit{Keywords:} #1}
\maketitle
\begin{abstract} Unordered flag manifolds are the manifolds of unordered $n$-tuple of mutually orthogonal lines in $\mathbb{R}^n$. In this paper, we develop some basic tools to compute the mod-$2$ cohomology groups of these spaces, and apply them for explicit computation for small $n$. We show that this computation  improves the known estimate of the number of Auerbach bases of normed linear spaces of small dimensions. 
 \end{abstract}
 \keywords{cohomology of groups, unordered flag manifolds, spectral sequence, Auerbach basis, Lusternik-Schnirelmann category}

\section{Introduction} \label{intro}

A complete flag in $ \mathbb{R}^n $ is a maximal sequence of nested linear subspaces
\[
(0 \subsetneq V_1 \subsetneq V_2 \subsetneq \dots \subsetneq V_n = \mathbb{R}^n).
\]
The complete flag manifold over the real numbers $ \Fl_n(\mathbb{R}) $ is a manifold whose points parameterize complete flags in $ \mathbb{R}^n $. This idea has been classically formalized by defining $ \Fl_n(\mathbb{R}) = \Gl_n(\mathbb{R})/B $, the quotient of the general linear group by a Borel subgroup $ B $. By the Gram-Schmidt orthogonalization, the datum of a complete flag is equivalent to that of an $ n $-tuple of pairwise orthogonal lines in $ \mathbb{R}^{n} $. So, $ \Fl_n(\mathbb{R}) \cong O(n)/ T_n$, where $O(n)$ is the orthogonal group and $ T_n $ is the ``torus'' of diagonal matrices with entries $\pm 1$ (a discrete group, not a Lie-theoretic torus). 
The complex counterpart of $ \Fl_n(\mathbb{R}) $, $ \Fl_n(\mathbb{C}) $ is defined by replacing real subspaces of $ \mathbb{R}^n $, $ \Gl_n(\mathbb{R}) $, $ O(n) $, $ T_n$ with complex subspaces of $ \mathbb{C}^n $, 
$ \Gl_n(\mathbb{C}) $, $ U(n) $ and the $ n $-torus $T_n \subseteq U(n) $ consists of diagonal matrices with entries complex numbers of absolute value $1$, respectively. The symmetric group $ \altmathcal{S}_n $ acts on $ \Fl_n(\mathbb{R})$ and $ \Fl_n(\mathbb{C}) $ by permuting the ordered $ n $ lines. We will denote the quotient of this action by $ \overline{\Fl}_n(\mathbb{R}))$ and $ \overline{\Fl}_n(\mathbb{C}))$, respectively, and will refer to them as ``unordered complete flag manifolds''. 

The complete flag manifolds along with other flag manifolds have been extensively studied in the past in algebraic topology and geometry as they play important roles in Lie theory. It is known, for instance, that their cohomology admits a combinatorial description in terms of Schubert polynomials \cite{Fulton-Pragracz} and is isomorphic to the coinvariant algebra of a polynomial algebra with respect to an action of the symmetric group. In contrast, unordered complete flag manifolds have not gained much attention from algebraic topologists. Understanding their topology, besides being interesting in its own right, has important implications for emerging problems in algebraic topology and convex geometry as discussed below. The rational cohomology and homotopy groups of these manifolds are easy to compute (see Proposition \ref{cohoflagcom}, \ref{homotopygroups}). However, since the cohomology groups of the symmetric groups are rich in torsion elements, one expects to gain more useful knowledge by computing their mod-$p$ cohomology groups. In this regard, our modest aim in this paper is to develop tools for the computations of mod-$2$ cohomology groups of these spaces in lower orders and present explicit descriptions in dimensions $3, 4, 5$ in the real case and in dimension $3$ in the complex case. 

We embed the hyperoctahedral group $\Bgroup_n$ as a subgroup of $O(n)$ via its reflection representation and we let $\Bgrouppos_n$ to be the subgroup $\Bgroup_n$ consisting of elements with positive determinant. This is known in the literature as its alternating subgroup \cite{Brenti-Reiner-Roichman}.
 We will show \ref{normalizer} that there are fiber sequences
\begin{gather*}
O(n) \hookrightarrow \overline{\Fl}_n(\mathbb{R}) \twoheadrightarrow B\Bgroup_n \tag*{and} \\
SO(n) \hookrightarrow \overline{\Fl}_n(\mathbb{R}) \twoheadrightarrow B\Bgrouppos_n.
\end{gather*}
For the last fiber sequence, in contrast with the first one, the fundamental group of the base acts trivially on the fiber. Thus, there is a cohomological Serre spectral sequence
\[
E_2^{*,*} = H^*(SO(n); \mathbb{F}_2) \otimes H^*(B\Bgrouppos_n; \mathbb{F}_2) \Rightarrow H^*(\overline{\Fl}_n(\mathbb{R}); \mathbb{F}_2)
\]
that do not involve complicated local coefficients. This is our main computational tool.

We feed the spectral sequence above explicit descriptions of the cohomology of $SO(n)$ and $\Bgrouppos_n$. The cohomology of $SO(n)$ is well known \cite{Brown}. In \S \ref{sec:chbpos}, we describe a method to derive the cohomology of $\Bgrouppos_n$ from a combinatorial description of the cohomology of $\Bgroup_n$ recently produced by L. Guerra in \cite{LG21}.

We then use a combination of geometric methods and the algebraic structure of the spectral sequence to compute its differentials. More specifically, the differentials on the second pages of the spectral sequences are determined by geometric reasoning (see Lemma \ref{geometric machinery d2}), and the higher pages algebraically. Our geometric approach is based on the configuration space model for $ B\Bgrouppos_n $ described in \S \ref{sec:recollection}. This allows us to describe the geometric cycles generating the groups $ H_2(B\Bgrouppos_n) $ explicitly. We then represent a basis of the cohomology groups via Thom classes of submanifolds transversal to the cycles, and this enables us to compute the pairing between $ H_2(B\Bgrouppos_n) $ and $ H^2(B\Bgrouppos_n) $ by counting the (transverse) intersections. In our algebraic arguments, we mostly exploit the multiplicative structure of the spectral sequence, the Transgression Theorem, and its connection with Steenrod squares. Finally, we compute the differential $ d_2 $ by dualizing and switching to homology.

In the complex case, we compute the cohomology of the unordered flag manifold of order $3$ in \S \ref{sec:complex}. We use the Serre spectral sequence associated to the fiber sequence 
\[ \Fl_3(\mathbb{C}) \to \overline{\Fl}_3(\mathbb{C}) \to B\altmathcal{S}_3. \] 
The cohomology of $\Fl_3(\mathbb{C})$ and $B\altmathcal{S}_3$ are known \cite{fulton1996, Giusti-Salvatore-Sinha}. The differentials in the spectral sequence can then be determined by standard algebraic methods, with the technical complication of local coefficients.
\\
In the follow-up work \cite{Guerra-Santanil}, based on some of the results (e.g., cohomology of $\Bgrouppos_n$) and techniques developed here, with a more delicate involvement of the $ E_\infty $-structure on $ BU $ and $ BO $, we generalize the results of this work to encompass complete unordered flag manifolds of all dimensions. Further work for deeper understanding of the topology of these spaces (ring structure and integer cohomology) will be taken up in future articles.
\\
In \S \ref{auerbach}, we discuss some direct implications of the computational results mentioned above in certain problems of Banach geometry. Let $\mathbb{X}$ be a $n$ dimensional (real or complex) Banach space and $S_{\mathbb{X}}$ denote its unit sphere. A basis $\altmathcal{B}=\{v_1, \cdots, v_n\}$ of $\mathbb{X}$ is called an \textit{Auerbach basis} if $v_i \in S_{\mathbb{X}}$ and there is a basis ${v^1, ..., v^n}$ of the dual space $X^{*}$ satisfying
\begin{equation*}
 v^i(v_j) = \delta_{ij}, \quad \mathrm{and} \ v^i \in S_{\mathbb{X^*}}\ \mathrm{for} \ i, j = 1, 2, \cdots, n. 
\end{equation*}
\\
In \cite{WW17}, Weber--Wojciechowski provided an estimate of the number of Auerbach bases of a finite-dimensional Banach space using topological methods. Here, one identifies bases that differ only by permutation or multiplication by scalars of absolute value one. In other words, two bases are said to be equivalent if they lie in the same orbit of the action of $\Bgroup_n$ on $GL_n(\mathbb{R})$ in the real case and of the action of $N_{U(n)}(T_n)$ on $U(n)$ in the complex case.
The method of their proof, in the real case, can be summed up in two main steps, more details of which are provided in \S \ref{auerbach}. In the first step, the bifurcation points of the $|\det|$ restricted to $S_{\mathbb{X}, \neq 0}^n $, 
\[
S_{\mathbb{X}, \neq 0}^n:= \{ (v_1, v_2, \cdots, v_n) \in S_{\mathbb{X}}^n \ | \ \det (v_1, v_2, \cdots, v_n) \neq  0\}, 
\]
can be shown to correspond to Auerbach bases. A proof of this fact is given in \cite{WW17} in the real case. Here, both real and complex case follows (see Corollary \ref{detandab}) from the characterization of the Auerbach bases we provide in Theorem \ref{int2auerbach}. Moreover, distinct bifurcation points of $|\det|$ restricted to   $S_{\mathbb{X}, \neq 0}^n/\Bgroup_n$ in the complex case) correspond to distinct equivalence classes of Auerbach bases. 
For $C^{1}$-differentiable norms, Auerbach bases can be interpreted as the critical points of the determinant restricted to $S_{\mathbb{X}, \neq 0}^n $ \cite{MS23}. 
\newline
In the second step, using Lusternick-Schnirelmann theory, it is shown that the number of bifurcation points of the $|\det|$ on $S_{\mathbb{X}, \neq 0}^n/\Bgroup_n$ is bounded below by 
$ \text{cat} (S_{\mathbb{X}, \neq 0}^n/ \Bgroup_n)$. Here, the category of a space $X$ is denoted by $\text{cat} (X)$. For generic $C^2$-differentiable norm, using Morse theory, the number of critical points up to equivalence is bounded below by $\text{rank} (H^*(S_{\mathbb{X}, \neq 0}^n/ \Bgroup_n))$.
 \newline 
Since, $  S_{\mathbb{X}, \neq 0}^n/ \Bgroup_n \cong \overline{\Fl}_n (\mathbb{R}) \times D^{n(n-1)/2}$ (see Proposition \ref{transition}), where $D^{n}$ is the open unit ball in $\mathbb{R}^n$, 
 \begin{equation} \label{catequality}
 \text{cat}(S_{\mathbb{X}, \neq 0}^n/\Bgroup_n) = \text{cat}(\overline{\Fl}_n (\mathbb{R})) 
, \quad \text{rank}(H^*(S_{\mathbb{X}, \neq 0}^n/\Bgroup_n)) = \text{rank} (H^*(\overline{\Fl}_n (\mathbb{R}))).
 \end{equation}
The method of proof in the complex case is very similar, with the group $\Bgroup_n$ replaced by $N_{U(n)}(T_n)$. In \cite{WW17}, the estimates of the category and rank were based on the known results about the cohomology of $SO(n), U(n)$, and the flag manifolds. While the estimate of $\text{cat}(\overline{\Fl}_n (\mathbb{R}))$ turned out to be optimal, the other quantities were not. In \S \ref{auerbach}, we show that our computational results described above can improve the estimates for these invariants. 

The unordered flag manifolds also appear in commutative K-theory. It has been noted (cf. \cite{AGV20}) that the classifying space for commutativity of $SO(n)$ (for $U(n)$ in the complex case), $B_{com}(SO(n))$ ($B_{com}(U(n))$ respectively) is a homotopy colimit of certain spaces that include unordered flag manifolds. 

\textbf{Convention.} All (co)homology groups, throughout this paper, are with $\mathbb{F}_2$ coefficients unless explicitly mentioned otherwise.

\section{Preliminaries}

This section introduces some definitions and notations and recalls some known results.

\subsection{Unordered flag manifolds}

\begin{definition}
The complete flag manifold $\Fl_n(\mathbb{F})$ in $\mathbb{F}^n$ is a space that consists of sequences of vector subspaces:
\[ (0 = V_0 \subsetneq V_1 \subsetneq \dots \subsetneq V_n = \mathbb{F}^n) \]
where $\dim(V_k) = \dim(V_{k-1}) + 1$ for all $k$. It is topologized as a subspace of the product of Grassmannian manifolds $\prod_{l=1}^n \text{Gr}_l(\mathbb{F}^n)$.
When $\mathbb{F} = \mathbb{R}$ or $\mathbb{C}$, we refer to them as the real or complex complete flag manifold, respectively.
\end{definition}

We consider the standard flag $ F_0 = (0 \subsetneq \mathbb{R}\{e_1\} \subsetneq \mathbb{R}\{e_1, e_2\} \subsetneq \dots \subsetneq \mathbb{R}\{e_1, \dots, e_n\} = \mathbb{R}^n ) $, where $ \{ e_1, \dots, e_n \} $ is the standard basis of $ \mathbb{R}^n $. The group $ \Gl_n(\mathbb{R}) $ acts transitively on complete flags, and the stabilizer of $ F_0 $ is the Borel subgroup $ \UT_n $ of upper triangular matrices.
This identifies $ \Fl_n(\mathbb{R}) $ with the quotient $ \Gl_n(\mathbb{R})/\UT_n $.

Since by the Gram-Schmidt process, any matrix can be written as a product of an upper triangular and an orthogonal matrix, the subgroups $ O(n) $ and $ \UT_n $ generate the whole group $ \Gl_n(\mathbb{R}) $.
Moreover, the intersection of $ \UT_n $ with $ O(n) $ is the ``torus'' $ T_n \subseteq O(n) $ consisting of all diagonal matrices with $ \pm1 $ entries on the diagonal. Therefore, $ \Fl_n(\mathbb{R}) $ is isomorphic to $ O(n)/T_n $.

We consider the Coxeter group of Type $ \Bgroup_n $. As defined classically (see for instance \cite{Humphreys}), it is the subgroup of $ \Gl_n(\mathbb{R}) $ generated by the reflections
\begin{gather*}
(x_1, \dots, x_n) \mapsto (-x_1, x_2, \dots, x_n) \tag*{and}\\
(x_1, \dots, x_n) \mapsto (x_1, \dots, x_{i-1}, x_{i+1}, x_i, x_{i+2}, \dots, x_n) ;\quad \forall 1 \leq i < n.
\end{gather*}

Note that $ \Bgroup_n $ is the subgroup of $ O(n) $ generated by $ T_n $ and the subgroup of permutation matrices $ \Sigma_n $.

Let $ \pi \colon \Bgroup_n \to \mathbb{Z}/2\mathbb{Z} $ be the group homomorphism defined by mapping each simple reflection to $ 1 + 2\mathbb{Z} $, then it is easy to see that $ \Bgrouppos_n = \ker(\pi) $. This alternative definition is introduced also in \cite{Brenti-Reiner-Roichman}. Further, by studying the stabilizer of a complete flag, it is easy to show that the normalizer $ N_{O(n)}(T_n) $ of the Torus in $ O(n) $ is $ \Bgroup_n $. Hence
\[
N_{SO(n)}(T_n \cap SO(n)) = N_{O(n)}(T_n) \cap SO(n) = \Bgrouppos_n.
\]
\begin{definition}
The real complete unordered flag manifold is the homogeneous space
\[
\overline{\Fl}_n(\mathbb{R}) = \frac{O(n)}{\Bgroup_n}.
\]
\end{definition}
$ \overline{\Fl}_n(\mathbb{R}) $ can be understood as a quotient of $ \Fl_n(\mathbb{R}) $ more intuitively. A complete flag
\[
\{ 0 \} = V_0 \subsetneq V_1 \subsetneq \dots \subsetneq V_n = \mathbb{R}^n
\]
is uniquely determined by the $ n $-tuple of mutually orthogonal lines $ (\ell_1,\dots,\ell_n) $, where $ \ell_i $ is the orthogonal complement of $ V_{i-1} $ inside $ V_i $. $ \overline{\Fl}_n(\mathbb{R}) $ is the quotient of $ \Fl_n(\mathbb{R}) $ by the action of the symmetric group $ \Sigma_n $ that permutes the order of these $ n $ lines.

Similarly, we define complex complete unordered flag manifolds.
In $ U(n) $ there is a subgroup $ T_n $ consisting of diagonal matrices. This is a Lie-theoretic torus and is isomorphic to $ U(1)^n $. The normalizer $ N_{U(n)}(T_n) $ is the subgroup generated by $ T_n $ and the subgroup $ \Sigma_n $ of permutation matrices. This is identified with the wreath product $ \Sigma_n \wr U(1) $.
\begin{definition}
The complex complete unordered flag manifold is the homogeneous space
\[
\overline{\Fl}_n(\mathbb{C}) = \frac{U(n)}{\Sigma_n \wr U(1)}.
\]
\end{definition}
There is an interpretation of $ \overline{\Fl}_n(\mathbb{C}) $ in terms of sets of pairwise orthogonal complex lines in $ \mathbb{C}^n $, analogous to the real case.

\subsection{Serre spectral sequence}

As $ \overline{\Fl}_n(\mathbb{R}) = SO(n)/\Bgrouppos_n $, there is a fiber sequence
\[
SO(n) \to \overline{\Fl}_n(\mathbb{R}) \to B\Bgrouppos_n,
\]
where $ B\Bgrouppos_n $ is the classifying space of $ \Bgrouppos_n $. The action of $ \Bgrouppos_n $ on the fiber $ SO(n) $ is given by left multiplication. This action is homotopy trivial because $SO(n)$ is path connected, so any path from an element of $ \Bgrouppos_n $ to the identity yields a trivializing homotopy.

We summarize our remarks in the following statement.
\begin{proposition} \label{normalizer}
Let $ n \in \mathbb{N} $. In the homotopy fiber sequence
\[
SO(n) \to \overline{Fl}_n(\mathbb{R})  \to B\Bgrouppos_n
\]
the action of $ \Bgrouppos_n = \pi_1(B\Bgrouppos_n) $ on the homology and cohomology of the fiber is trivial.

Therefore, there is an induced spectral sequence (see \cite{JM01})
\[
E_2^{p,q} = H^p(B\Bgrouppos_n) \otimes H^q(SO(n)) \Rightarrow H^{p+q}(\overline{\Fl}_n(\mathbb{R})).
\]
\end{proposition}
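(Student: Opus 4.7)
The plan is first to realize $\overline{\Fl}_n(\mathbb{R})$ as $SO(n)/\Bgrouppos_n$, then to produce the fiber sequence as the Borel construction of a discrete free action, and finally to verify the monodromy hypothesis needed to untwist the $E_2$-page.

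I would begin by showing $O(n)/\Bgroup_n \cong SO(n)/\Bgrouppos_n$. Since $\Bgrouppos_n = \Bgroup_n \cap SO(n)$, the inclusion $SO(n) \hookrightarrow O(n)$ descends to a well-defined injection of coset spaces: two $SO(n)$-elements in the same $\Bgroup_n$-coset differ by an element of $\Bgroup_n \cap SO(n) = \Bgrouppos_n$. For surjectivity, any coset $g\Bgroup_n$ with $\det g = -1$ may be replaced by $(gr)\Bgroup_n$ for any reflection $r \in \Bgroup_n \setminus \Bgrouppos_n$, so every coset has a representative in $SO(n)$. Compact Hausdorff considerations promote this continuous bijection to a homeomorphism.

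Next, I would observe that $\Bgrouppos_n$ is a finite, hence discrete, subgroup of $SO(n)$ acting freely by right translation, so $SO(n) \to SO(n)/\Bgrouppos_n$ is a principal $\Bgrouppos_n$-bundle. Realizing it as a pullback of the universal bundle, one obtains the weak equivalence $SO(n)/\Bgrouppos_n \simeq SO(n) \times_{\Bgrouppos_n} E\Bgrouppos_n$ and the classifying map to $B\Bgrouppos_n = E\Bgrouppos_n/\Bgrouppos_n$, whose homotopy fiber is $SO(n)$. This furnishes the claimed fiber sequence.

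For the monodromy assertion, I would invoke the standard identification of the $\pi_1(B\Bgrouppos_n)$-action on the fiber with the action of $\Bgrouppos_n$ on $SO(n)$ by left translation (induced from the right action of $\Bgrouppos_n$ on the total space of the principal bundle). Fix $b \in \Bgrouppos_n$. Since $SO(n)$ is path-connected, choose a path $\gamma \colon [0,1] \to SO(n)$ with $\gamma(0) = e$ and $\gamma(1) = b$. Then $H(g,t) = \gamma(t)\cdot g$ is a homotopy from $\mathrm{id}_{SO(n)}$ to left multiplication by $b$, so each monodromy acts as the identity on $H^*(SO(n))$.

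The genuine content lies in this last step; the remainder is bookkeeping. Once local coefficients are trivial, the Serre spectral sequence of the fibration (as in \cite{JM01}) has the stated untwisted tensor product $E_2$-page over $\mathbb{F}_2$, and convergence to $H^{p+q}(\overline{\Fl}_n(\mathbb{R}))$ is automatic. The only potential subtlety I would double-check is the compatibility between the right-bundle-action convention and the left-translation model for the monodromy, since a sign/side error there is the easiest way to misstate the triviality argument; conveniently, since we are multiplying by $b$ versus $b^{-1}$ and both lie in the path-connected group $SO(n)$, either convention yields the same conclusion.
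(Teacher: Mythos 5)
Your proposal is correct and follows essentially the same route as the paper: identify $\overline{\Fl}_n(\mathbb{R})$ with $SO(n)/\Bgrouppos_n$, obtain the fiber sequence from the Borel construction of the free action of the discrete group $\Bgrouppos_n$, and trivialize the monodromy by observing that left translation by any $b \in \Bgrouppos_n$ is homotopic to the identity since $SO(n)$ is path-connected. You spell out the bookkeeping (the coset-space homeomorphism and the bundle-side conventions) more explicitly than the paper, but the essential content is identical.
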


The cohomology groups of $ SO(n) $ are well known and stated below for future reference. 
\begin{theorem}[\cite{Brown,AT}] \label{chso3} 
\[H^*(SO(n), \mathbb{F}_2) \cong \bigoplus_{i \ \mathrm{odd}} \mathbb{F}_2[\beta_i]/(\beta^{p_i}_i)\]
\vspace{-0.2cm}
where $|\beta_i|= i$ and $p_i$ is the smallest power of $2$ such that $|\beta_i^{p_i}| \geq n$.
\end{theorem}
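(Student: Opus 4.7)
The plan is to prove this classical result by induction on $n$, using the principal fibration
\[
SO(n-1) \hookrightarrow SO(n) \twoheadrightarrow S^{n-1}
\]
arising from the transitive action of $SO(n)$ on the unit sphere in $\mathbb{R}^n$ with point stabilizer $SO(n-1)$. The base case $SO(2) \cong S^1$ yields $H^*(SO(2); \mathbb{F}_2) \cong \mathbb{F}_2[\beta_1]/(\beta_1^2)$, consistent with $p_1 = 2$.

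For the inductive step with $n \geq 3$, since $S^{n-1}$ is simply connected, I would analyze the mod-$2$ Serre spectral sequence
\[
E_2^{p,q} = H^p(S^{n-1}; \mathbb{F}_2) \otimes H^q(SO(n-1); \mathbb{F}_2) \Longrightarrow H^{p+q}(SO(n); \mathbb{F}_2),
\]
whose $E_2$-page is concentrated in the two columns $p = 0$ and $p = n-1$, so only the transgression $d_{n-1}$ can carry information. Equivalently, one may work with the associated Wang exact sequence, whose connecting map is a derivation $\theta \colon H^*(SO(n-1); \mathbb{F}_2) \to H^{*-n+2}(SO(n-1); \mathbb{F}_2)$ of degree $-(n-2)$. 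By the inductive hypothesis every multiplicative generator $\beta_i$ has degree $i \leq n-2$, so $\theta(\beta_i)$ lies in non-positive degree and thus vanishes for $i < n-2$; the sole remaining case $\theta(\beta_{n-2}) \in H^0$ is ruled out using Kudo's transgression theorem, which enforces compatibility of $\theta$ with the Steenrod square action and excludes a non-trivial value. Hence $\theta \equiv 0$, the spectral sequence degenerates at $E_2$, and one obtains an additive isomorphism
\[
H^*(SO(n); \mathbb{F}_2) \cong H^*(S^{n-1}; \mathbb{F}_2) \otimes H^*(SO(n-1); \mathbb{F}_2).
\]

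The main obstacle is to resolve the multiplicative extensions from the collapsed $E_\infty$-page to the ring $H^*(SO(n); \mathbb{F}_2)$. Let $\tilde{s} \in H^{n-1}(SO(n); \mathbb{F}_2)$ be any lift of the fundamental class of $S^{n-1}$. When $n$ is even, the degree $n-1$ is odd and $\tilde{s}$ is forced (by a dimension count combined with Steenrod-square compatibility) to be independent of the subring generated by the previous $\beta_i$, contributing a new factor $\mathbb{F}_2[\beta_{n-1}]/(\beta_{n-1}^2)$. When $n$ is odd, the degree $n-1$ is even and a similar dimension count in the $(n-1)$-graded piece forces $\tilde{s}$ to coincide with a specific power $\beta_i^k$ of an existing generator, thereby bumping the truncation height of $\beta_i$ from $p_i(n-1)$ to $p_i(n)$ as dictated by the theorem. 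The correct extension in each case can be isolated by matching the Steenrod square action on both sides via Kudo's theorem, or by comparison with the known rational cohomology $H^*(SO(n); \mathbb{Q})$ (which is an exterior algebra on odd-degree classes). Assembling these ingredients yields the explicit ring structure claimed.
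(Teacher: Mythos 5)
The paper cites this classical result without proof, so there is no in-paper argument to compare against; I will evaluate your proposal on its own terms. Your induction via the fibration $SO(n-1)\hookrightarrow SO(n)\twoheadrightarrow S^{n-1}$ is a reasonable strategy, and the observation that degree considerations force $\theta(\beta_i)=0$ for $i<n-2$ is correct. However, there are two genuine gaps where you lean on an appeal to ``Kudo's transgression theorem'' that does not actually close the argument.

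First, the potential transgression $\theta(\beta_{n-2})\in H^0(SO(n-1);\mathbb{F}_2)$ when $n$ is odd is \emph{not} ruled out by Kudo's theorem. Kudo gives $\tau(\mathrm{Sq}^i\beta_{n-2})=\mathrm{Sq}^i\bigl(\tau(\beta_{n-2})\bigr)$; but $\tau(\beta_{n-2})$ would be the top class $s\in H^{n-1}(S^{n-1})$, and $\mathrm{Sq}^i(s)=0$ for all $i>0$ since $H^{>n-1}(S^{n-1})=0$. So the compatibility constraint $\tau(\mathrm{Sq}^i\beta_{n-2})=0$ is vacuously satisfied and produces no contradiction. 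Equivalently, $\theta(\beta_{n-2})=1$ would mean that $s$ pulls back to zero in $H^{n-1}(SO(n))$; to refute this one needs an independent input, typically the reflection map $\mathbb{R}P^{n-1}\to SO(n)$ (sending $[v]$ to $r_v\,r_{e_n}$), whose composite with $SO(n)\to S^{n-1}$ has odd mod-$2$ degree, so $s$ pulls back nontrivially. Without such an argument, or an alternative rank/Poincar\'e-duality or $\pi_1$-based argument, the claim that the spectral sequence collapses is unproved.

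Second, the resolution of the multiplicative extension is also not established by your reasoning. The filtration on $H^*(SO(n))$ coming from the fibration has only two steps; in $E_\infty$ the class $\beta_1^{p_1(n-1)}$ (for instance, $\beta_1^4$ for $n=5$) is zero in the fiber column, so in $H^*(SO(n))$ it lies in the top filtration and is \emph{either} zero \emph{or} equal to the lift $\tilde s$. A pure dimension count cannot distinguish these possibilities, and the Steenrod action you want to invoke on $H^*(SO(n))$ is not available from the inductive hypothesis alone. Comparison with rational cohomology does not help either, since the extension problem is an integral/mod-$2$ phenomenon. Again the standard fix is to pin down the Steenrod action, or the nonvanishing of the relevant power, via the maps $\mathbb{R}P^{n-1}\to SO(n)$ (this is essentially the cell-structure approach of Hatcher, \S3.D). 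In short: the architecture of the proof is sound, but both places where you write ``Kudo's theorem'' require an additional geometric input that you have not supplied.
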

The additive structure of the cohomology of $ B\Bgrouppos_n $ will be computed later on.
Another possible approach is to use the Leray-Cartan spectral sequence associated to the spectral sequence 
\begin{align*}
    \altmathcal{S}_n \to \Fl_n(\mathbb{R}) \to \overline{\Fl}_n(\mathbb{R}) \tag*{and} \\
    \altmathcal{S}_n \to \Fl_n(\mathbb{C}) \to \overline{\Fl}_n(\mathbb{C}).
\end{align*} 
The cohomology of $\altmathcal{S}_n$ (\cite{Giusti-Salvatore-Sinha}), $\Fl_n(\mathbb{R})$ and $\Fl_n(\mathbb{C})$ are known. We state the latter for future reference. 
\begin{theorem} \cite{fulton1996} \label{cohoflagreal} 
The cohomology ring $H^* (\Fl_n(\mathbb{R}))$ is given by
\[H^* ({\Fl}_n(\mathbb{R}))= H^*( BT_n)/(\sigma_1, \sigma_2, \cdots, \sigma_n)= \mathbb{F}_2[x_1, x_2, \cdots, x_n]/(\sigma_1, \sigma_2, \cdots, \sigma_n), \]
where $x_i$'s corresponds to generators in degree one and $\sigma_i(x_1, x_2, \cdots, x_n)$ are elementary symmmetric functions.
\end{theorem}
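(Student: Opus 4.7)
The plan is to invoke the classical Borel fibration argument applied to
\[ \Fl_n(\mathbb{R}) = O(n)/T_n \longrightarrow BT_n \xrightarrow{\pi} BO(n), \]
obtained by applying the classifying space construction to the inclusion $T_n \hookrightarrow O(n)$, with fiber $O(n)/T_n$. The inputs I would use are: $H^*(BT_n; \mathbb{F}_2) = \mathbb{F}_2[x_1, \ldots, x_n]$ with $|x_i|=1$ (since $BT_n \simeq (\mathbb{R}P^\infty)^n$); $H^*(BO(n); \mathbb{F}_2) = \mathbb{F}_2[w_1, \ldots, w_n]$ with $|w_i|=i$; and the mod-$2$ splitting principle, which identifies $\pi^*(w_i)$ with the elementary symmetric polynomial $\sigma_i(x_1, \ldots, x_n)$.

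Next I would set up the Serre spectral sequence of the above fibration. After verifying that the monodromy action of $\pi_1(BO(n)) = \mathbb{Z}/2$ on the fiber cohomology is trivial with $\mathbb{F}_2$ coefficients (a standard check, using that the nontrivial component of $O(n)$ acts on $\Fl_n(\mathbb{R})$ by a diffeomorphism whose action on mod-$2$ cohomology can be analyzed via the Schubert stratification), this becomes
\[ E_2^{p,q} = H^p(BO(n); \mathbb{F}_2) \otimes H^q(\Fl_n(\mathbb{R}); \mathbb{F}_2) \Rightarrow H^{p+q}(BT_n; \mathbb{F}_2). \]
Since $\sigma_1, \ldots, \sigma_n$ are algebraically independent, the edge map $\pi^* : H^*(BO(n)) \to H^*(BT_n)$ is injective; hence no differential can leave the bottom row $E_r^{*,0}$. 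Combined with a Poincaré series count, this will force collapse at $E_2$.

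For the count, the crucial algebraic input is that $(\sigma_1, \ldots, \sigma_n)$ is a regular sequence in $\mathbb{F}_2[x_1, \ldots, x_n]$, so the polynomial ring is free of rank $n!$ over $\mathbb{F}_2[\sigma_1, \ldots, \sigma_n]$, and the coinvariant algebra $\mathbb{F}_2[x_1, \ldots, x_n]/(\sigma_1, \ldots, \sigma_n)$ has total dimension $n!$. On the other side, the Schubert decomposition of $\Fl_n(\mathbb{R})$ into $n!$ Bruhat cells indexed by $\Sigma_n$ shows that $H^*(\Fl_n(\mathbb{R}); \mathbb{F}_2)$ also has total dimension $n!$. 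Comparing Poincaré series then gives the collapse and identifies $H^*(\Fl_n(\mathbb{R}); \mathbb{F}_2)$ with $H^*(BT_n)/\pi^*(H^{>0}(BO(n)))$ as a graded $\mathbb{F}_2$-vector space.

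To upgrade this to a ring isomorphism, I would use naturality: the restriction $\iota^* : H^*(BT_n) \to H^*(\Fl_n(\mathbb{R}))$ is a ring map that kills each $\sigma_i = \iota^* \pi^*(w_i)$, so it factors through a surjective ring homomorphism $\mathbb{F}_2[x_1, \ldots, x_n]/(\sigma_1, \ldots, \sigma_n) \twoheadrightarrow H^*(\Fl_n(\mathbb{R}))$, which is an isomorphism by the dimension count. The most delicate point I expect to encounter is the monodromy triviality, together with arranging the Poincaré polynomials on the two sides to be compared; both are standard, but they carry the essential geometry of the argument.
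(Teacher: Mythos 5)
The paper does not actually prove this theorem; it is quoted from Fulton--Pragacz, so there is no ``paper proof'' to compare against. Your Borel fibration $\Fl_n(\mathbb{R}) \to BT_n \to BO(n)$ is a reasonable spectral-sequence route, but there is a genuine gap in your dimension count. Having $n!$ Schubert cells gives only the inequality $\dim_{\mathbb{F}_2} H^*(\Fl_n(\mathbb{R});\mathbb{F}_2) \le n!$; equality requires that the mod-$2$ cellular boundary in the Bruhat decomposition vanishes, which is a nontrivial theorem about real Schubert varieties (incidence numbers lie in $\{0,\pm 2\}$) and cannot simply be read off from the cell count. Luckily your argument only needs the upper bound: the coefficientwise inequalities
\[
P(BT_n) \;=\; P(E_\infty) \;\le\; P(E_2) \;=\; P(BO(n))\cdot P(\Fl_n(\mathbb{R})) \;\le\; P(BO(n))\cdot\prod_{i=1}^n(1+t+\cdots+t^{i-1}) \;=\; P(BT_n)
\]
collapse to equalities, which simultaneously forces the spectral sequence to degenerate and \emph{derives} the Poincar\'e polynomial of $\Fl_n(\mathbb{R})$. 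You should phrase the count this way rather than asserting $\dim H^* = n!$ from the cell structure.

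The other weak point is the monodromy check, which you label ``standard.'' Since $T_n$ surjects onto $\pi_0(O(n))$, the monodromy is left multiplication by $\mathrm{diag}(-1,1,\dots,1)$ on $O(n)/T_n$, and showing that this acts trivially on $H^*(\,\cdot\,;\mathbb{F}_2)$ without already knowing the ring structure is delicate; your proposal to analyze it via Schubert strata is close to circular, since the invariance of the Schubert basis under this involution is essentially equivalent to what you are trying to prove. The route the cited reference actually takes is to realize $\Fl_n(\mathbb{R})$ as an iterated $\mathbb{R}\mathrm{P}^{k}$-bundle over a point and apply Leray--Hirsch at each stage: this produces the relations $\sigma_i=0$ directly from the projective-bundle formula and avoids both the monodromy question and the cell-count subtlety entirely.
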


\begin{theorem} (\cite{fulton1996}) \label{cohoflagcom}
The cohomology ring $H^*({\Fl}_n(\mathbb{C}); \mathbb{Z})$ is generated by elements in degree $ 2 $ 
\[H^*({\Fl}_n(\mathbb{C}); \mathbb{Z})= H^*( BT_n, \mathbb{Z})/(\sigma_1, \sigma_2, \cdots, \sigma_n)= \mathbb{Z}[x_1, x_2, \cdots, x_n]/(\sigma_1, \sigma_2, \cdots, \sigma_n) . \]
 
\end{theorem}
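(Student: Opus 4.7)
The plan is to use the Borel fibration
$$ \Fl_n(\mathbb{C}) \longrightarrow BT_n \longrightarrow BU(n) $$
obtained by applying the classifying space functor to the inclusion $T_n \hookrightarrow U(n)$. The cohomology rings at the two ends are classical: $H^*(BT_n;\mathbb{Z}) = \mathbb{Z}[x_1,\ldots,x_n]$ with each $x_i$ in degree $2$, and $H^*(BU(n);\mathbb{Z}) = \mathbb{Z}[c_1,\ldots,c_n]$ generated by the universal Chern classes. By the splitting principle, the induced map sends $c_i$ to the elementary symmetric polynomial $\sigma_i(x_1,\ldots,x_n)$.

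The crucial input is that $\mathbb{Z}[x_1,\ldots,x_n]$ is a free module of rank $n!$ over its subring $\mathbb{Z}[\sigma_1,\ldots,\sigma_n]$ of symmetric polynomials; a concrete basis is given by the staircase monomials $\{x_1^{i_1}\cdots x_n^{i_n} : 0 \le i_k \le n-k\}$. Lifting this basis to $H^*(BT_n;\mathbb{Z})$ and restricting to a fiber, the classes obtained freely generate the fiber cohomology, so the Leray--Hirsch theorem applies and the Serre spectral sequence for the Borel fibration collapses at $E_2$. This yields an isomorphism of $H^*(BU(n);\mathbb{Z})$-modules
$$ H^*(BT_n;\mathbb{Z}) \;\cong\; H^*(BU(n);\mathbb{Z}) \otimes_{\mathbb{Z}} H^*(\Fl_n(\mathbb{C});\mathbb{Z}). $$
Pulling back along the fiber inclusion then amounts to reducing modulo the positive-degree part of $H^*(BU(n);\mathbb{Z})$, producing the desired presentation
$$ H^*(\Fl_n(\mathbb{C});\mathbb{Z}) \;\cong\; \mathbb{Z}[x_1,\ldots,x_n]/(\sigma_1,\ldots,\sigma_n), $$
from which generation in degree $2$ is immediate.

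The main obstacle will be establishing the freeness of $\mathbb{Z}[x_1,\ldots,x_n]$ over its subring of symmetric polynomials \emph{integrally}: over $\mathbb{Q}$ this is a consequence of the Chevalley--Shephard--Todd theorem, but to use it in a spectral sequence argument we need the integral statement, which requires exhibiting an explicit basis such as the staircase monomials or Schubert polynomials. An alternative route, avoiding the spectral sequence entirely, is to use the Schubert cell decomposition of $\Fl_n(\mathbb{C})$: the cells are all of even real dimension, so the cellular cochain complex has zero differential and the additive structure follows at once; the ring structure is then pinned down by identifying the generators $x_i$ with first Chern classes of the tautological line bundles on $\Fl_n(\mathbb{C})$ and checking the symmetric-function relations directly.
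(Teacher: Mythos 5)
The paper states this as a citation from Fulton's book \cite{fulton1996} without giving a proof, so there is no argument in the paper to compare against; you are reconstructing a proof of a quoted classical theorem. Your overall plan is the standard Borel approach, but as written the Leray--Hirsch step is circular: the assertion that the staircase monomials, restricted to a fiber, ``freely generate the fiber cohomology'' is essentially the statement to be proved. To invoke Leray--Hirsch (or equivalently to deduce collapse of the Serre spectral sequence for $\Fl_n(\mathbb{C}) \to BT_n \to BU(n)$) you must already know that $H^*(\Fl_n(\mathbb{C});\mathbb{Z})$ is a finitely generated free abelian group concentrated in even degrees \emph{and} that the restriction $H^*(BT_n;\mathbb{Z}) \to H^*(\Fl_n(\mathbb{C});\mathbb{Z})$ is surjective; neither comes for free from the algebraic freeness of $\mathbb{Z}[x_1,\dots,x_n]$ over $\mathbb{Z}[\sigma_1,\dots,\sigma_n]$.

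Both facts are supplied by the Schubert cell decomposition (or, inductively, by the projective-bundle fibration $\Fl_{n-1}(\mathbb{C}) \to \Fl_n(\mathbb{C}) \to \mathbb{C}P^{n-1}$, which keeps everything concentrated in even degrees and makes surjectivity of restriction by tautological Chern classes manifest). You present the Schubert-cell route as a second, independent alternative, but logically it is a \emph{prerequisite} for your spectral-sequence argument, not a replacement for it: with the cell decomposition (or the induction) establishing the additive structure and even-concentration of the fiber cohomology first, your Borel-fibration argument then does correctly deliver the ring presentation $\mathbb{Z}[x_1,\dots,x_n]/(\sigma_1,\dots,\sigma_n)$. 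Your final remark is on point: the integral freeness of $\mathbb{Z}[x_1,\dots,x_n]$ over the symmetric polynomials via staircase monomials is elementary (by the division algorithm) and does not need Chevalley--Shephard--Todd, so the only real gap is the ordering of the two ingredients you already have in hand.
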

 
The rational cohomology of both real and complex unordered flag manifolds can be easily derived using standard methods.
\begin{proposition} \label{rationalcoho} 
\[ H^* (\overline{\Fl}_n(\mathbb{R}); \mathbb{Q})= H^* (SO(n); \mathbb{Q})   \quad \mathrm{and} \quad    H^*(\overline{\Fl}_n(\mathbb{C}); \mathbb{Q}) =  \mathbb{Q}. \]
\end{proposition}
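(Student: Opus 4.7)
The plan is to deduce both statements from the standard principle that the rational cohomology of the classifying space of a finite group is trivial in positive degrees (by the transfer/averaging argument). I will feed this into the Serre spectral sequences of two well-chosen fibrations.

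For the real case, I start from the fiber sequence
\[ SO(n) \to \overline{\Fl}_n(\mathbb{R}) \to B\Bgrouppos_n \]
supplied by Proposition \ref{normalizer}, in which the monodromy on the fiber cohomology is trivial. Its rational Serre spectral sequence has $E_2$-page
\[ E_2^{p,q} = H^p(B\Bgrouppos_n; \mathbb{Q}) \otimes H^q(SO(n); \mathbb{Q}), \]
and since $\Bgrouppos_n$ is a finite group the base factor vanishes for $p > 0$. The spectral sequence therefore collapses at $E_2$ for lack of room, and the edge map yields the identification $H^*(\overline{\Fl}_n(\mathbb{R}); \mathbb{Q}) \cong H^*(SO(n); \mathbb{Q})$.

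For the complex case, I write $\overline{\Fl}_n(\mathbb{C}) = U(n)/(\Sigma_n \wr U(1))$ and compare the resulting fiber sequence
\[ U(n) \to \overline{\Fl}_n(\mathbb{C}) \to B(\Sigma_n \wr U(1)) \]
with the path-loop fibration $U(n) \to \ast \to BU(n)$ via the inclusion of the normalizer of the maximal torus $\Sigma_n \wr U(1) \hookrightarrow U(n)$. The key observation is that the induced map $B(\Sigma_n \wr U(1)) \to BU(n)$ is a rational equivalence: applying the previous principle to the extension $T_n \to \Sigma_n \wr U(1) \to \Sigma_n$ gives
\[ H^*(B(\Sigma_n \wr U(1)); \mathbb{Q}) \cong H^*(BT_n; \mathbb{Q})^{\Sigma_n} = \mathbb{Q}[x_1, \ldots, x_n]^{\Sigma_n} = \mathbb{Q}[\sigma_1, \ldots, \sigma_n], \]
which matches $H^*(BU(n); \mathbb{Q}) = \mathbb{Q}[c_1, \ldots, c_n]$ under the Chern class identification. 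The comparison theorem for Serre spectral sequences then gives $H^*(\overline{\Fl}_n(\mathbb{C}); \mathbb{Q}) \cong H^*(\ast; \mathbb{Q}) = \mathbb{Q}$.

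Neither step presents a serious obstacle. The only routine verification is the triviality of local coefficients in each spectral sequence, which holds because the monodromy acts on the fiber by left multiplication on a path-connected Lie group and is therefore nullhomotopic.
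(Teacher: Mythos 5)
Your proof is correct. For the real case your route coincides with the paper's: both exploit the fibration $SO(n) \to \overline{\Fl}_n(\mathbb{R}) \to B\Bgrouppos_n$ from Proposition \ref{normalizer} and the vanishing of $H^{>0}(B\Bgrouppos_n; \mathbb{Q})$ for the finite group $\Bgrouppos_n$.

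For the complex case you and the paper pick genuinely different fibrations. The paper uses the finite cover $\Fl_n(\mathbb{C}) \to \overline{\Fl}_n(\mathbb{C}) \to B\altmathcal{S}_n$; the rational Serre spectral sequence collapses to $[H^*(\Fl_n(\mathbb{C}); \mathbb{Q})]^{\altmathcal{S}_n}$, the $\altmathcal{S}_n$-invariants of the coinvariant algebra from Theorem \ref{cohoflagcom}, and these invariants are concentrated in degree $0$ because the coinvariant algebra carries only one copy of the trivial representation. You instead use $U(n) \to \overline{\Fl}_n(\mathbb{C}) \to B(\Sigma_n \wr U(1))$ and compare with the universal $U(n)$-bundle, resting on the classical fact that $B(\Sigma_n \wr U(1)) \to BU(n)$ is a rational equivalence. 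Both arguments are clean; the paper's leans on material it has already recorded (Theorem \ref{cohoflagcom}), while yours invokes the identification $H^*(BT_n; \mathbb{Q})^{\Sigma_n} \cong H^*(BU(n); \mathbb{Q})$, which is conceptually transparent but external to the rest of the paper. One imprecision in your closing sentence: you claim triviality of local coefficients in ``each'' spectral sequence by the connected-Lie-group monodromy argument, but in $BT_n \to B(\Sigma_n \wr U(1)) \to B\Sigma_n$ the fiber is $BT_n$, not a connected Lie group, and the $\Sigma_n$-action on $H^*(BT_n; \mathbb{Q})$ permutes the generators $x_i$ and is not trivial. This does not affect your computation --- the rational vanishing of $H^{>0}$ for a finite base group holds with arbitrary coefficient module, and you correctly recorded the $E_2^{0,*}$-term as the invariants --- but the blanket justification as stated is off for that third spectral sequence.
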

\begin{proof}
As it is classically known, since $ \Bgrouppos_n $ is discrete, the Serre spectral sequence associated to the fiber sequence of Proposition \ref{normalizer} collapses at the second page and provides an isomorphism with the subalgebra of invariants of the cohomology of the fiber:
\[ [H^*(SO(n)/\Bgrouppos_n ; \mathbb{Q})) \cong [H^*(SO(n); \mathbb{Q}))]^{\Bgrouppos_n} \]  
Since the action of $ \Bgrouppos_n $ on the fiber is homotopically trivial,
 \[ [H^*(SO(n); \mathbb{Q})]^{\Bgrouppos_n}  \cong H^*(SO(n); \mathbb{Q}) \]
Thus the real case of the statement now follows. 

In the complex case, we use the Serre spectral sequence associated to the fiber sequence
\[  \Fl_n(\mathbb{C})  \to  \overline{\Fl}_n(\mathbb{C}) \to  B \altmathcal{S}_n\] 
which yields an isomorphism
\[ H^*(\overline{\Fl}_n(\mathbb{C}); \mathbb{Q}) \cong [H^q(\overline{\Fl}_n(\mathbb{C}); \mathbb{Q})]^{\altmathcal{S}_n} \cong \mathbb{Q} \] 
concentrated in degree $ 0 $ by Theorem \ref{cohoflagcom}.
\end{proof}

The homotopy groups of the unordered flag manifolds can be easily derived from the known results about the homotopy groups of $O(n)$ and $U(n)$ \cite{AT}. 
\begin{proposition}  The homotopy groups of the unordered flag manifolds are given by the following.  \label{homotopygroups}
 \[ \pi_i(\overline{\Fl}_n(\mathbb{R}))= \pi_i(O(n)) \quad \mathrm{for} \ i \geq 2, \quad  \ \ \pi_1 (\overline{\Fl}_n(\mathbb{R}))= \Bgroup_n.\]

 \[ \pi_i(\overline{\Fl}_n(\mathbb{C}))= \pi_i(U(n)) \quad \mathrm{for} \ i > 2, \quad \pi_2(\overline{\Fl}_n(\mathbb{C}))=\mathbb{Z}^{n-1}, \ \ \pi_1 (\overline{\Fl}_n(\mathbb{C}))= \altmathcal{S}_n.\]
\end{proposition}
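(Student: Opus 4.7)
The plan is to apply the long exact sequence of homotopy groups to the Borel-construction fiber sequences
\[
O(n) \to \overline{\Fl}_n(\mathbb{R}) \to B\Bgroup_n
\quad\text{and}\quad
U(n) \to \overline{\Fl}_n(\mathbb{C}) \to B(\Sigma_n \wr U(1)),
\]
the first being the sequence of Proposition \ref{normalizer}, and the second its complex analog obtained from the free left-multiplication action of $N_{U(n)}(T_n) \cong \Sigma_n \wr U(1)$ on $U(n)$. In both cases, the approach is to feed into the LES the known homotopy groups of $O(n)$ (resp.\ $U(n)$) together with the elementary homotopy of the classifying space appearing on the base.

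In the real case, since $\Bgroup_n$ is discrete, $\pi_i(B\Bgroup_n) = 0$ for all $i \geq 2$, so the LES immediately yields $\pi_i(\overline{\Fl}_n(\mathbb{R})) \cong \pi_i(O(n))$ for every $i \geq 2$. The tail of the LES reads
\[
0 \to \pi_1(O(n)) \to \pi_1(\overline{\Fl}_n(\mathbb{R})) \to \Bgroup_n \xrightarrow{\det} \pi_0(O(n)) = \mathbb{Z}/2 \to 0,
\]
where the connecting map is induced by the determinant on $\Bgroup_n \subset O(n)$; exactness together with surjectivity of $\det$ (which also confirms connectedness of $\overline{\Fl}_n(\mathbb{R})$) exhibits $\pi_1(\overline{\Fl}_n(\mathbb{R}))$ as an extension of $\ker(\det) = \Bgroup^+_n$ by $\pi_1(O(n))$, which one matches with $\Bgroup_n$.

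For the complex case, $\Sigma_n \wr U(1)$ is a compact Lie group whose identity component is the torus $U(1)^n$. Consequently $B(\Sigma_n \wr U(1))$ has $\pi_1 = \Sigma_n$, $\pi_2 \cong \pi_1(U(1)^n) = \mathbb{Z}^n$, and vanishing $\pi_i$ for $i \geq 3$. Combining this with $\pi_1(U(n)) = \mathbb{Z}$, $\pi_2(U(n)) = 0$, and the same vanishing argument as above for $i \geq 3$, the remaining segment of the LES becomes
\[
0 \to \pi_2(\overline{\Fl}_n(\mathbb{C})) \to \mathbb{Z}^n \xrightarrow{\phi} \mathbb{Z} \to \pi_1(\overline{\Fl}_n(\mathbb{C})) \to \Sigma_n \to 0.
\]
The map $\phi$ is induced on $\pi_1$ by the inclusion of the diagonal torus $U(1)^n \hookrightarrow U(n)$, i.e.\ the coordinate sum, and is therefore surjective. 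This forces $\pi_2(\overline{\Fl}_n(\mathbb{C})) \cong \ker \phi = \mathbb{Z}^{n-1}$ and $\pi_1(\overline{\Fl}_n(\mathbb{C})) \cong \Sigma_n$, completing the complex case cleanly.

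The main obstacle is the final $\pi_1$ identification in the real case: the LES only produces the extension of $\Bgroup^+_n$ by $\pi_1(SO(n))$, and matching this with $\Bgroup_n$ requires extra input beyond the purely algebraic LES, which one can supply by a direct covering-space analysis of the principal $\Bgroup_n$-bundle $O(n) \to \overline{\Fl}_n(\mathbb{R})$. The higher homotopy groups in both the real and complex cases, as well as the entire complex $\pi_1$ and $\pi_2$ computation, reduce to immediate bookkeeping on the LES once the base's homotopy is in hand.
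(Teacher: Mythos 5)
Your argument runs on the Borel (classifying\hyp space) fiber sequences $O(n) \to \overline{\Fl}_n(\mathbb{R}) \to B\Bgroup_n$ and $U(n) \to \overline{\Fl}_n(\mathbb{C}) \to B(\Sigma_n \wr U(1))$, whereas the paper's one\hyp line proof invokes the quotient fibrations $\Bgroup_n \hookrightarrow O(n) \to \overline{\Fl}_n(\mathbb{R})$ and $N_{U(n)}(T_n) \hookrightarrow U(n) \to \overline{\Fl}_n(\mathbb{C})$. These are two incarnations of the same bundle data — your sequence is obtained from the paper's by delooping the fiber into the base — and the resulting long exact sequences are shifted by one degree but carry identical content. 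Everything you write for $i \geq 2$ in the real case and the entire complex case, including the identification of the connecting map $\mathbb{Z}^n \to \mathbb{Z}$ with the coordinate sum induced by the diagonal torus and its surjectivity (which kills the image of $\pi_1(U(n))$ and isolates $\pi_2(\overline{\Fl}_n(\mathbb{C})) \cong \mathbb{Z}^{n-1}$), agrees step for step with the paper's LES computation and is correct.

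The step you rightly flag as incomplete is the real $\pi_1$, and it is also the one the paper's proof defers with ``we leave the details to the reader.'' From either fibration the LES yields only the extension
\[
1 \longrightarrow \pi_1(O(n)) \longrightarrow \pi_1\bigl(\overline{\Fl}_n(\mathbb{R})\bigr) \longrightarrow \Bgrouppos_n \longrightarrow 1,
\]
with $\Bgrouppos_n$ appearing as $\ker(\det\colon \Bgroup_n \to \mathbb{Z}/2)$, and recovering the asserted $\pi_1(\overline{\Fl}_n(\mathbb{R})) \cong \Bgroup_n$ from this requires determining the extension class. Your covering\hyp space suggestion is the right tool: for $n \geq 3$, $\pi_1(\overline{\Fl}_n(\mathbb{R}))$ is the preimage of $\Bgrouppos_n$ under the universal covering $\mathrm{Spin}(n) \twoheadrightarrow SO(n)$. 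But notice this presents $\pi_1$ as a central $\mathbb{Z}/2$\hyp extension \emph{of} $\Bgrouppos_n$, while $\Bgroup_n$ contains $\Bgrouppos_n$ as an index\hyp $2$ \emph{subgroup}; these two descriptions have the same order $2^n n!$ but are a priori (and in low rank, actually) different group extensions. Pinning down which group actually arises is the genuine content missing from both your proposal and the paper's proof, so you should not present ``one matches it with $\Bgroup_n$'' as bookkeeping.
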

\begin{proof} 

The proof follows using the long exact sequence of homotopy groups associated to the fibrations 

\[\Bgroup_n \hookrightarrow O(n) \longrightarrow {} \overline{\Fl}_n(\mathbb{R}) \quad \text{and} \quad N(n) \hookrightarrow U(n) \longrightarrow {} \overline{\Fl}_n(\mathbb{C}).\]
 We leave the details to the reader.
\end{proof}

\section{The cohomology of \texorpdfstring{$ \Bgroup_n $ }{text} and \texorpdfstring{$ \Bgrouppos_n $ }{text}} \label{sec:recollection}

The cohomology of $ \Bgroup_n $ was computed by Guerra \cite{LG21}.  The direct sum of these cohomology groups exhibits a rich structure called the Hopf ring.
\begin{definition} A graded Hopf ring over a field $ \mathbb{F} $ is a ring object in the category of co-commutative $ \mathbb{F} $-coalgebras.
 Explicitly, it consists of a septuple $(V, \Delta, \odot, \cdot, S, \eta, \varepsilon)$ such that:
 \begin{itemize}
 \item $ (A, \odot, \Delta, S, \eta, \varepsilon) $ is a graded bi-commutative Hopf algebra, with product $ \odot \colon A \otimes A \to A $, coproduct $ \Delta \colon A \to A \otimes A $, antipode $ S \colon A \to A $, unit $ \eta \colon \mathbb{F} \to A $ and counit $ \varepsilon \colon A \to \mathbb{F} $, 
 \item $ (A, \cdot, \Delta, \varepsilon) $ is a graded bialgebra, 
 \item and the following distributivity law (expressed in Sweedler notation) holds for all $ x, y, z \in A $
\[ x \cdot (y \odot z)=\sum_{(x)} \pm (x_{(1)} \cdot y) \odot (x_{(2)} \cdot z).\]
 \end{itemize}
\end{definition}
As we work mod $ 2 $, the sign in the distributivity law is not important.

\subsection{Hopf ring structure on the cohomology of \texorpdfstring{$\Bgroup_n $}{text}} 
The results presented in this subsection are stated either explicitly or implicitly in \cite{LG21}.

There is a configuration-space-like model for the classifying space of $ \Bgroup_n $ studied by Salvetti--De Concini \cite{Salvetti:00}. For a reflection $\tau \in \Bgroup_n$, let $H_{\tau} \subset \mathbb{R}^n$ denote the hyperplane consisting of its fixed points.
Let $ \mathbb{R}^\infty $ be the direct limit of $ \mathbb{R}^n $ for $ n \in \mathbb{N} $.
Then we define
\[
E\Bgroup_n = (\mathbb{R}^\infty \otimes \mathbb{R}^n) \setminus \bigcup_{\tau \in \Bgroup_n} (\mathbb{R}^\infty \otimes H_\tau).\]
The obvious action of $ O(n) $ on $ \mathbb{R}^\infty \otimes \mathbb{R}^n $ restricts to an action of $ \Bgroup_n $ on $ E\Bgroup_n $. Since $ E\Bgroup_n $ is contractible, its quotient $ B\Bgroup_n = E\Bgroup_n/\Bgroup_n $ is a model for the classifying space of $ \Bgroup_n $. 

The image of the composition
\[ \Bgroup_n \times \Bgroup_m \to O(n) \times O(m) \stackrel{\oplus}{\to} O(n+m)
\]
is contained in $ \Bgroup_{n+m} $. 
Therefore, $ \Bgroup_n \times \Bgroup_m $ acts on $ E\Bgroup_{n+m} $.
Using the configuration models explained above, there are a $ \Bgroup_n \times \Bgroup_m $-equivariant projection map $ p \colon E\Bgroup_{n+m} \to E\Bgroup_n \times E\Bgroup_m $ that splits every sequence of $ (n+m) $ points of $ \mathbb{R}^\infty $ into two sequences, one with $ n $ points and the other with $ m $ points. It induces an homotopy equivalence $$ \frac{E\Bgroup_{n+m}}{\Bgroup_n \times \Bgroup_m} \simeq B\Bgroup_n \times B\Bgroup_m $$ after passing to quotients. This combines with a covering map to provide the topological product
\[
\mu \colon B\Bgroup_n \times B\Bgroup_m \simeq \frac{E\Bgroup_{n+m}}{\Bgroup_n \times \Bgroup_m} \to \frac{E\Bgroup_{n+m}}{\Bgroup_{n+m}} = B\Bgroup_{n+m}.
\]

We fix a basis for $ \mathbb{R}^\infty $. There are two families of cohomology classes $ \alpha_k \in H^k(B\Bgroup_k) $ and $ \gamma_{k, l} \in H^{l(2^k-1)}(B\Bgroup_{l2^k}) $ as follows:
\begin{itemize}
\item $ \alpha_k $ is the Thom class of the finite codimensional submanifold $ X_{\alpha_k} \subseteq B\Bgroup_n $ arising from $ n $-tuples of points $ (p_1, \dots, p_n) $
whose first coordinates in the given basis are all $ 0 $.
\item $ \gamma_{k, l} $ is the Thom class of the finite codimensional properly immersed manifold $ X_{\gamma_{k, l}} \looparrowright B\Bgroup_n $ arising from $ l2^k $-tuples of points that can be subdivided into $ l $ groups of $ 2^k $ points each, where points in each group share their first coordinate.
\end{itemize}
As always in this section, we refer to \cite{LG21} for the precise definitions. Let $ A_B = \bigoplus_n H^*(B\Bgroup_n) $. We consider the following structural morphisms in $ A_B $:
\begin{enumerate}
\item the projection $ \varepsilon \colon A_B \to \mathbb{F} $ onto the $ 0^{th} $ component $ H^*(B\Bgroup_0) = \mathbb{F} $, 
\item the injection of the $ 0^{th} $ component $ \eta \colon \mathbb{F} \to A_B $, 
\item the coproduct $ \Delta \colon A_B \to A_B \otimes A_B $ determined, component-by-component, by the cohomological map $ \mu^* \colon H^*(B\Bgroup_{n+m}) \to H^*(B\Bgroup_n) \otimes H^*(B\Bgroup_m) $ associated to the geometric product $ \mu \colon B\Bgroup_n \times B\Bgroup_m \to B\Bgroup_{n+m} $, 
\item the product $ \odot \colon A_B \otimes A_B \to A_B $, determined, component-by-component, by the cohomological transfer map $ \mu^{!} $ (that goes in the opposite direction with respect to $ \mu^* $) associated to the geometric product $ \mu $, 
\item and the usual component-wise cup product $ \cdot \colon A_B \otimes A_B \to A_B $.
\end{enumerate}
\begin{theorem}[\cite{LG21}] \label{cohob}
$ (A_B, \cdot , \odot, \Delta, \eta, \varepsilon)$ is a commutative Hopf ring generated by the classes $\gamma_{k, l}$ ($k \geq 0, l\geq 1$) and $\alpha_n$ ($n>1)$ satisfying the following relations:
\begin{enumerate}
 \item the $\cdot $ product of elements belonging to different components is zero, 
 \item $\gamma_{k, l} \odot \gamma_{k, m}= \binom{l+m}{k} \gamma_{k, l+m}$, 
 \item $\alpha_m \odot \alpha_n =\binom{l+m}{m} \alpha_{m+n}$, 
 \item $\Delta(\gamma_{k, l})= \sum^l_{i=0}\gamma_{k, i} \otimes \gamma_{k, l-i}$, 
 \item $\Delta(\alpha_n)= \sum^n_{i=0}\alpha_i \otimes \alpha_{n-i}$.
\end{enumerate}
\end{theorem}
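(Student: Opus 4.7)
The plan is to proceed along the lines sketched in \cite{LG21}, leveraging the configuration space model $E\Bgroup_n$ introduced above. First, I would verify that the quintuple of structural operations $(\cdot, \odot, \Delta, \eta, \varepsilon)$ endows $A_B$ with a Hopf ring structure. The cup product $\cdot$ is the standard one and is commutative. Associativity and commutativity of $\odot$ follow from the homotopy-commutative, homotopy-associative nature of the geometric product $\mu$, which in turn reduces to the corresponding properties of concatenation of configurations in $\mathbb{R}^\infty \otimes \mathbb{R}^n$. Co-associativity and co-commutativity of $\Delta$ are then formal consequences via $\mu^*$. The distributivity law, which is the most delicate axiom, follows from a double coset decomposition applied to the pullback square of coverings
\[
\begin{CD}
E\Bgroup_{n+m}/(\Bgroup_a \times \Bgroup_b \times \Bgroup_c \times \Bgroup_d) @>>> E\Bgroup_{n+m}/(\Bgroup_n \times \Bgroup_m) \\
@VVV @VVV \\
E\Bgroup_{n+m}/(\Bgroup_{a+c} \times \Bgroup_{b+d}) @>>> E\Bgroup_{n+m}/\Bgroup_{n+m}
\end{CD}
\]
where one side applies the transfer $\mu^!$ and the other applies the restriction $\mu^*$; the resulting identity is exactly the Hopf ring distributivity.

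Next, I would compute the additive cohomology of each $B\Bgroup_n$ and identify the generators. Since $\Bgroup_n \cong (\mathbb{Z}/2)^n \rtimes \Sigma_n$ is the wreath product $\mathbb{Z}/2 \wr \Sigma_n$, the Nakaoka/Steenrod calculation for wreath products applies and one obtains $H^*(B\Bgroup_n)$ from the $\Sigma_n$-invariants in $H^*(B(\mathbb{Z}/2)^n) \otimes H^*(B\Sigma_n)$, with appropriate corrections coming from the Serre spectral sequence of $B(\mathbb{Z}/2)^n \to B\Bgroup_n \to B\Sigma_n$ (which collapses mod $2$). The class $\alpha_n$ is then recognized as the Euler class of the defining permutation-sign representation, and $\gamma_{k,l}$ as the Thom class obtained by iterating the Dyer--Lashof-type transfer associated to pairing up $2^k$ points with coincident first coordinates, $l$ times. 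Relations (4) and (5) arise directly from the diagonal behaviour of configurations: splitting a sequence of points into two groups distributes the constraint, giving the binomial coproduct. Relations (2) and (3) encode the combinatorics of merging configurations, where the binomial coefficient records the number of interleavings of the marked groups, i.e. the degree of the transfer. Relation (1) is immediate from the observation that $\mu^* \colon H^*(B\Bgroup_{n+m}) \to H^*(B\Bgroup_n) \otimes H^*(B\Bgroup_m)$ preserves the bigrading by component, so a $\odot$-product sends mismatched components to zero.

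The main obstacle will be proving that the listed generators and relations are \emph{complete}, i.e. that no further generators or relations are required. The natural strategy is to argue inductively on $n$, comparing Poincaré series. One sets up a candidate Hopf ring $\widetilde{A}_B$ presented by the generators and relations of the statement, constructs a morphism $\widetilde{A}_B \to A_B$ by sending the symbols to the Thom classes introduced above, and shows that it is an isomorphism by checking the dimensions in each bidegree against the wreath-product calculation of $H^*(B\Bgroup_n)$. The combinatorial control of $\widetilde{A}_B$ in terms of Hopf monomials (words in $\alpha$'s and $\gamma$'s modulo the given relations) should match, degree by degree, the basis produced by the Nakaoka decomposition; verifying this match is where the bulk of the technical effort lies, and is the step I would expect to occupy most of the argument.
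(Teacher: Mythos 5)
The paper does not prove this statement. The theorem is labelled with the citation $[\text{LG21}]$, and the subsection in which it appears opens with the explicit remark that the results there are ``stated either explicitly or implicitly in \cite{LG21}.'' There is therefore no in-paper proof to compare against; you have reconstructed, from the surrounding discussion, what the proof in \cite{LG21} ought to look like, and at the level of strategy your sketch is faithful: verify the Hopf ring axioms from the geometric product $\mu$, identify $\alpha_n$ as an Euler class and $\gamma_{k,l}$ as Thom classes of configuration submanifolds, and close the presentation by a dimension count against the known additive answer, in close analogy with the Giusti--Salvatore--Sinha treatment of $\altmathcal{S}_n$.

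Two places in the sketch are imprecise enough that they would not carry a referee on their own. First, the displayed commutative diagram is not actually a pullback square. The pullback of $E\Bgroup_{n+m}/(\Bgroup_n\times\Bgroup_m)\to E\Bgroup_{n+m}/\Bgroup_{n+m}\leftarrow E\Bgroup_{n+m}/(\Bgroup_{a+c}\times\Bgroup_{b+d})$ is the \emph{disjoint union}, indexed by the double cosets (equivalently, by all compatible quadruples $(a,b,c,d)$), of the spaces $E\Bgroup_{n+m}/(\Bgroup_a\times\Bgroup_b\times\Bgroup_c\times\Bgroup_d)$; it is exactly the sum over that index set, fed through the Mackey base-change identity $\mu^*\circ\mu^! = \sum\mu^!\circ\mu^*$, that produces the distributivity formula. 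Writing a single square with four fixed indices elides the place where the coproduct terms of the Hopf ring distributivity actually arise. Second, ``comparing Poincar\'e series'' undersells the completeness step: one must produce the explicit Hopf-monomial basis (the gathered-block/profile combinatorics the paper recalls in its Definition of a Hopf monomial), show that the given relations force every element into that span, and then check linear independence, typically by pairing against explicit homology classes or by an independent additive calculation such as the one you invoke via the wreath-product spectral sequence. That is the genuine technical core, and your sketch correctly flags it as such, but names only the numerical bookkeeping rather than the structural argument that actually establishes it.
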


\begin{definition}[\cite{LG21}] \label{basis B}
A gathered block in $ A_B $ is a formal cup product of Hopf ring generators $ \gamma_{k, l} $ and $ \alpha_k $ all belonging to the same component. We define the index of $ \gamma_{k, l} $ and $ \alpha_l $ to be $ k $ and $ 0 $ respectively. The profile of a gathered block is the multiset of the indexes of generators that appear there. A Hopf monomial is a transfer product of gathered blocks with pairwise different profiles.
\end{definition}

The set of Hopf monomials forms a basis for $ A_B $ as a $ \mathbb{F} $-vector space. The Hopf ring relations above allow us to algorithmically compute the cup product of any number of Hopf monomials.

We remark that the classes $ \gamma_{k,l} $ arise primarily from the cohomology of the symmetric groups $ \altmathcal{S}_n $ (see Definition 4.7 and Theorem 4.9 of \cite{Giusti-Salvatore-Sinha}), while the kernels of the restriction map $ H^*(B\Bgroup_n) \to H^*(B\altmathcal{S}_n) $ induced by the standard inclusions $\altmathcal{S}_n \to \Bgroup_n $ for $ n \in \mathbb{N} $ give the Hopf ring ideal generated by the classes $ \alpha_n $.

\subsection{Geometric cochains of \texorpdfstring{$ \Bgroup_n $}{text}} 
In \cite{FMS22}, the authors present a framework for understanding cohomology classes geometrically. Specifically, we require information about the behavior of Thom classes under pullbacks, including the following facts:
\begin{itemize}
\item If $ f \colon M \to N $ is a smooth map between manifolds and $ X \subseteq N $ is a proper submanifold transverse to $ f $, then the image of the Thom class of $ X $ with respect to the pullback $ f^* \colon H^*(N) \to H^*(M) $ is the Thom class of $ f^{-1}(X) $.
\item if $ f \colon M \to N $ is a smooth covering between manifolds and $ X \subseteq M $ is a proper submanifold on which the restriction of $ f $ is injective, then the image of the Thom class of $ X $ with respect to the transfer map $ f^{!} \colon H^*(M) \to H^*(N) $ is the Thom class of $ f(X) $.
\end{itemize}
The authors of \cite{FMS22} study these maps in the context of finite-dimensional manifolds. Our spaces $ E\Bgroup_n $ are infinite-dimensional, but we use the fact that $ E\Bgroup_n $ is the direct limit of the sequence of spaces $ \{ E_{k, n} := E\Bgroup_n \cap ( \mathbb{R}^k \otimes \mathbb{R}^n) \}_{k \in \mathbb{N}} $ and that the inclusion maps $ E_{k, n} \to E_{k+1, n} $ are highly connected for large $ k $ to reduce to computations in the finite-dimensional approximants $ E_{k, n} $.
The cohomology of $ E\Bgroup_n $ is the inverse limit of the cohomologies of $ E_{k, n} $, and thus, by restricting to $ E_{k, n} $ for $ k $ large enough, one can deduce that the two statements above still hold for finite codimensional submanifold $ X \subseteq E\Bgroup_n $ or $ X \subseteq B\Bgroup_n $.

These properties allow us to describe many classes in $ A_B $ as Thom classes of certain submanifolds $B\Bgroup_n$. For example, $ \gamma_{1, 1} \odot \alpha_1 $ is the Thom class of the submanifold in $ B\Bgroup_3 $ consisting of three points in $ \mathbb{R}^\infty $ that can be divided into two groups, one with two points that share their first coordinate, and the other with a single point with first coordinate equal to $ 0 $.

In what follows, we will often identify some cohomology classes in $ H^*(B\Bgroup_n) $ as Thom classes by using the properties of the two facts stated in this subsection, without further mention.

\subsection{The cohomology of \texorpdfstring{$ \Bgrouppos_n$}{Bn+}} \label{sec:chbpos}

We first describe a procedure to compute the cohomology of the normalizer of the torus in $SO(n)$, $ H^*(B\Bgrouppos_n)$ for all $n$, and then list them explicitly for small $n$. In the following statements, we let $ \rho_n \colon H^*(B\Bgroup_n) \to H^*(B\Bgrouppos_n) $ to  be the cohomological restriction map and $ \tau_n \colon H^*(B\Bgrouppos_n) \to H^*(B\Bgroup_n) $ be the associated transfer map.

Since the double covering
\begin{equation} \label{fibrentob} S^0 \hookrightarrow B\Bgrouppos_n \rightarrow B\Bgroup_n
\end{equation}
has fiber $ S^0 $, their cohomology groups fit into a Gysin long exact sequence:
\begin{equation} \label{Gysin}
\cdots \rightarrow H^{k-1}(B\Bgroup_n) \xrightarrow{\partial} H^k(B\Bgroup_n) \xrightarrow{\rho_n} H^k(B\Bgrouppos_n) \xrightarrow{\tau_n}H^k(B\Bgroup_n) \xrightarrow{\partial}H^{k+1}(B\Bgroup_n) \rightarrow \cdots
\end{equation}
So, knowledge of the connecting homomorphisms $ \partial $ and the groups $ H^*(B\Bgroup_n) $ fully determine the groups $ H^*(B\Bgrouppos_n) $.

\begin{lemma} \label{connectinghom}
$ \partial $ is equal to the cup product with the class $ e = \gamma_{1, 1} \odot 1_{n-2} + \alpha_1 \odot 1_{n-1} $.
\end{lemma}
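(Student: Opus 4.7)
The plan is to identify $\partial$ with cup product with $w_1(L_\pi) \in H^1(B\Bgroup_n)$, where $L_\pi$ is the real line bundle on $B\Bgroup_n$ classified by the sign character $\pi \colon \Bgroup_n \to \{\pm 1\}$ whose kernel is $\Bgrouppos_n$. The double cover in \eqref{fibrentob} is precisely the unit $S^0$-bundle of $L_\pi$, so the standard Gysin sequence with $\mathbb{F}_2$ coefficients immediately yields $\partial = \cup\, w_1(L_\pi)$, and the content of the lemma reduces to proving $w_1(L_\pi) = e$.

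To do this I would construct an explicit section of $L_\pi$ whose zero divisor represents $e$. Following the geometric cochain interpretation of cohomology recalled in \S\ref{sec:recollection} (cf. \cite{FMS22}), giving a section of $L_\pi$ amounts to specifying a $\pi$-equivariant function $\phi \colon E\Bgroup_n \to \mathbb{R}$, i.e.\ one satisfying $\phi(g\cdot p) = \pi(g)\phi(p)$ for every $g \in \Bgroup_n$. I take the polynomial
\[
\phi(p_1, \dots, p_n) = \prod_{i=1}^n p_i^{(1)} \cdot \prod_{1 \leq i < j \leq n} \bigl((p_i^{(1)})^2 - (p_j^{(1)})^2\bigr),
\]
where $p_i^{(1)}$ denotes the first coordinate of $p_i$ in the fixed basis of $\mathbb{R}^\infty$. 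Writing a generic element of $\Bgroup_n$ as $g = (\epsilon, \tau) \in T_n \rtimes \altmathcal{S}_n$, a direct check shows that the first product transforms by $\prod_i \epsilon_i$ (the permutation merely re-indexes it) while the second product is invariant under sign changes and picks up $\mathrm{sgn}(\tau)$ from the permutation (one sign flip per inversion of $\tau$). The total equivariance factor is $\prod_i \epsilon_i \cdot \mathrm{sgn}(\tau) = \pi(g)$.

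The zero locus of $\phi$ in $E\Bgroup_n$ is the union of the hyperplanes $\{p_i^{(1)} = 0\}$ for $1 \leq i \leq n$ and $\{p_i^{(1)} = \pm p_j^{(1)}\}$ for $1 \leq i < j \leq n$. After passing to the $\Bgroup_n$-quotient, these two families descend respectively to the immersed submanifolds whose Thom classes, by the description in \S\ref{sec:recollection}, are $\alpha_1 \odot 1_{n-1}$ (at least one point has first coordinate zero) and $\gamma_{1, 1} \odot 1_{n-2}$ (two points share their first coordinate up to sign). Since $\phi$ vanishes to first order at generic points of every irreducible component of its zero set, the divisor class of the descended section $\bar{\phi}$ equals $\alpha_1 \odot 1_{n-1} + \gamma_{1, 1} \odot 1_{n-2} = e$, and the standard identification of $w_1$ of a real line bundle with the divisor of a transverse section gives $w_1(L_\pi) = e$.

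The main technical subtlety is transversality: $\bar{\phi}$ is not transverse to zero along the loci where several of the defining hyperplanes coincide, because $\phi$ vanishes there to higher order. These degenerate loci have codimension at least two in $B\Bgroup_n$ and therefore do not affect the codimension-one divisor class, but justifying this rigorously is most cleanly done within the geometric cochain framework of \cite{FMS22} already invoked in the paper. A more algebraic fallback, which sidesteps transversality altogether, is to verify $e = \pi$ directly in $H^1(B\Bgroup_n) \cong \mathrm{Hom}(\Bgroup_n, \mathbb{F}_2)$ by restricting to the subgroups $T_n$ and $\altmathcal{S}_n$, whose images generate the abelianization, and computing $\alpha_1 \odot 1_{n-1}|_{T_n}$, $\alpha_1 \odot 1_{n-1}|_{\altmathcal{S}_n}$, $\gamma_{1, 1} \odot 1_{n-2}|_{T_n}$, $\gamma_{1, 1} \odot 1_{n-2}|_{\altmathcal{S}_n}$ via the Mackey double-coset formula applied to the base cases $\alpha_1 \in H^1(B\Bgroup_1)$ and $\gamma_{1, 1} \in H^1(B\Bgroup_2)$.
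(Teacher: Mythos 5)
Your argument is correct but takes a genuinely different main route from the paper's own proof. You reinterpret $\partial$ as cup product with $w_1(L_\pi)$ for the line bundle classified by the sign character, and then compute this class by exhibiting an explicit $\pi$-equivariant polynomial $\phi$ on $E\Bgroup_n$ whose descended zero divisor is $\alpha_1 \odot 1_{n-1} + \gamma_{1,1} \odot 1_{n-2}$. The equivariance check, the identification of the two orbit-types of vanishing hyperplanes (some $p_i^{(1)}=0$, some $p_i^{(1)}=\pm p_j^{(1)}$) with the Thom-class submanifolds for the two Hopf monomials, and the observation that the non-transverse strata sit in codimension $\geq 2$ and hence do not affect the mod-$2$ divisor class are all sound; invoking the geometric-cochain framework of \cite{FMS22}, already used elsewhere in the paper for exactly this kind of Thom-class bookkeeping, is the right way to discharge the transversality debt. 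The paper instead proves the lemma by the method you sketch as the algebraic fallback: it writes $e = \lambda_{1,n}(\gamma_{1,1}\odot 1_{n-2}) + \lambda_{2,n}(\alpha_1 \odot 1_{n-1})$ on degree grounds, obtains $\lambda_{1,n}=1$ by restricting along $\altmathcal{S}_n \hookrightarrow \Bgroup_n$ (where the double cover pulls back to $BA_n \to B\altmathcal{S}_n$ with nonzero Euler class while $\alpha_1 \odot 1_{n-1}$ dies), and obtains $\lambda_{2,n}=1$ by restricting along $\Bgroup_1 \hookrightarrow \Bgroup_n$ (where the Euler class is $\alpha_1$ and $\gamma_{1,1}\odot 1_{n-2}$ dies) --- a variant of your $T_n$/$\altmathcal{S}_n$ restriction idea using a smaller subgroup. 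The two routes are complementary: the paper's restriction argument is shorter and sidesteps transversality entirely, while your explicit section makes the identity geometrically transparent for all $n$ at once and ties the Euler class directly to the configuration-space Thom-class dictionary that underlies the paper's subsequent differential computations.
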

\begin{proof}
It is known that the connecting homomorphism is the cup product with the Euler class $ e \in H^1(B\Bgroup_n) $. By theorem \ref{cohob}, 
\[ e = \lambda_{1, n}(\gamma_{1, 1} \odot 1_{n-2}) + \lambda_{2, n}(\alpha_1 \odot 1_{n-1}) \] for some $\lambda_{1, n}, \lambda_{2, n} \in \mathbb{F}_2$. We only need to prove that $ \lambda_{1, n} = \lambda_{2, n} = 1 $.

In the following diagram of bundles involving \ref{fibrentob} and the inclusion map $\altmathcal{S}_n \to \Bgroup_n $ the right square is a pullback.
\begin{center}
\begin{tikzcd}
S^0 \arrow[hook]{r} \arrow[equal]{d} & B A_n \arrow[two heads]{r} \arrow{d} & B\altmathcal{S}_n \arrow{d} \\
S^0 \arrow[hook]{r} & B\Bgrouppos_n \arrow[two heads]{r} & B\Bgroup_n
\end{tikzcd}
\end{center}
The Euler class of the top bundle is non-zero by \cite[Lemma 3.9]{Giusti-Sinha} and, as remarked in \S \ref{sec:recollection}, 
$ \alpha_1 \odot 1_{n-1} $ belongs to the kernel of the restriction map from $ H^*(B\Bgroup_n) $ to $ H^*(B\altmathcal{S}_n) $. It follows from the functoriality of the Euler class that $ e $ maps to a non-zero class under this map. Thus, $ \lambda_{1, n}=1 $ for all $n \geq 1$.

In order to determine $\lambda_{2, n} $, we see that, for $n=1$, $ \Bgroup_1 \cong \mathbb{F}_2 $ is a cyclic group of order $2$ and \ref{fibrentob} is its classifying bundle, 
whose Euler class is non-zero, hence equal to $ \alpha_1 $.
So $\lambda_{2, 1}= 1$.
For $ n > 1 $, the pullback of \ref{fibrentob} along the inclusion $ i \colon \Bgroup_1 \to \Bgroup_n $ is isomorphic to it, 
so $ e $ restricts to $ \alpha_1 $ in the cohomology of $ \Bgroup_1 $.
Since
\[i^*(\lambda_{1, n}(\gamma_{1, 1} \odot 1_{n-2}) + \lambda_{2, n}(\alpha_1 \odot 1_{n-1})) = \lambda_{2, n}(\alpha_1) \]
we deduce that $\lambda_{2, 1}=1$ for all $n$. 
\end{proof}
Since the cohomology of $ SO(n)/\Bgrouppos_n $ for degrees above $ n(n-1)/2 $ is zero, for our computation in spectral sequence, it suffices to know the cohomology of $ \Bgrouppos_n $ up to that degree. 



In the remaining part of this paper, by an abuse of notation, we will use the same notation for a class $ x \in H^*(B\Bgroup_n) $ and its image $ \rho_n(x) \in H^*(B\Bgrouppos_n) $ when there is no confusion regarding the group we are working with.
\begin{proposition} \label{cohomology Bpos}
There is a non-zero element $ t \in \coker(\rho_4) $ in degree $ 3 $. Bases for $ H^*(B\Bgrouppos_n) $ in low degrees are given in the following table.
\begin{table}[H] 
\caption{Bases for $ (\image(\rho_n))^d \subseteq H^d(B\Bgrouppos_n) $ depending on $ n $ and $ d $.}
\begin{center}
\begin{tabular}{|c|p{0.2\textwidth}p{0.3\textwidth}p{0.45\textwidth}|}
\hline
\diag{.1em}{.5cm}{$ \mathbf{d} $}{$\mathbf{n}$} & $ \mathbf{3} $ & $ \mathbf{4} $ & $ \mathbf{5} $ \\
\hline
{$ \mathbf{0} $} & $ \{ 1_3 \} $ & $ \{ 1_4 \} $ & $ \{ 1_5 \} $ \\
{ $ \mathbf{1} $} & $ \{ \gamma_{1, 1} \odot 1_1 \} $ & $ \{ \gamma_{1, 1} \odot 1_2 \} $ & $ \{ \gamma_{1, 1} \odot 1_3 \} $ \\
$ \mathbf{2} $ & $ \{ \gamma_{1, 1}^2 \odot 1_1, \alpha_2 \odot 1_2 \} $ & $ \{ \gamma_{1, 1}^2 \odot 1_2, \gamma_{1, 2}, \alpha_2 \odot 1_2 \} $ & $ \{ \gamma_{1, 1}^2 \odot 1_3, \gamma_{1, 2} \odot 1_1, \alpha_2 \odot 1_3 \} $ \\
$ \mathbf{3} $ & $ \{ \gamma_{1, 1}^3, \gamma_{1, 1}^2 \odot \alpha_1, \alpha_3 \} $ & $ \{ \gamma_{1, 1}^3 \odot 1_2, \gamma_{1, 1} \odot \alpha_1^2 \odot 1_1, \gamma_{1, 1} \odot \alpha_2, \gamma_{2, 1}, \alpha_3 \odot 1_1, t \} $ & $ \{ \gamma_{1, 1}^3 \odot 1_3, \gamma_{1, 1}^2 \odot \gamma_{1, 1} \odot 1_1, (\gamma_{1, 1} \alpha_2) \odot 1_3, \alpha_3 \odot 1_2 \} $ \\
$ \mathbf{4} $ & $ \dots $ & $ \dots $ & $ \{ \gamma_{1, 1}^4 \odot 1_3, \gamma_{1, 1}^3 \odot \gamma_{1, 1} \odot 1_1, \gamma_{1, 2}^2 \odot 1_1, (\gamma_{1, 1}^2 \alpha_2) \odot 1_3, \gamma_{1, 1}^2 \odot \alpha_2 \odot 1_1, \gamma_{1, 1} \odot \alpha_3, \alpha_2^2 \odot 1_3 \} $ \\
$ \mathbf{5} $ & $ \dots $ & $ \dots $ & $ \{ \gamma_{1, 1}^5 \odot 1_3, \gamma_{1, 1}^4 \odot \gamma_{1, 1} \odot 1_1, \gamma_{1, 1}^3 \odot \gamma_{1, 1}^2 \odot 1_1, (\gamma_{1, 1}^3 \alpha_2) \odot 1_3, \gamma_{1, 1}^3 \odot \alpha_2 \odot 1_1, \gamma_{1, 1}^2 \odot \alpha_3, (\gamma_{1, 1} \alpha_2^2) \odot 1_3, (\gamma_{1, 1} \alpha_2) \odot \alpha_2 \odot 1_1, \gamma_{1, 1} \odot \alpha_2^2 \odot 1_1, \alpha_2^2 \odot \alpha_1 \odot 1_1, \alpha_5 \} $ \\
\hline
\end{tabular}
\end{center}
\label{basis table}
\end{table}
\end{proposition}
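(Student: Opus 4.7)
The plan is to exploit the Gysin long exact sequence (4.2) in combination with Lemma 4.2, which identifies the connecting homomorphism $\partial$ with cup product by $e = \gamma_{1,1} \odot 1_{n-2} + \alpha_1 \odot 1_{n-1}$. Splitting (4.2) into short exact sequences gives, in each degree $k$,
$$0 \to H^k(B\Bgroup_n)/\image(\partial) \xrightarrow{\rho_n} H^k(B\Bgrouppos_n) \xrightarrow{\tau_n} \ker(\partial) \to 0,$$
so the task reduces to computing the kernel and image of multiplication by $e$ in the relevant degrees.

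First I would list the Hopf monomial basis of $H^d(B\Bgroup_n)$ for each pair $(n,d)$ in the table, following Definition 3.3 and Theorem 3.2. Then, for every such monomial $m$, I would compute $e \cdot m$ by unpacking the Hopf ring distributivity
$$x \cdot (y \odot z) = \sum_{(x)} (x_{(1)} \cdot y) \odot (x_{(2)} \cdot z)$$
applied with $x = e$, and simplifying using the internal cup products inside a gathered block together with the transfer product relations (2) and (3) of Theorem 3.2. Reading off the resulting linear maps degree by degree yields $\image(\partial)$ and $\ker(\partial)$, and the entries of the table are lifts via $\rho_n$ of a chosen basis of $H^d(B\Bgroup_n)/\image(\partial)$.

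The exceptional class $t$ in the cell $n=4$, $d=3$ arises from a nonzero element of $\ker(\partial) \subseteq H^3(B\Bgroup_4)$, that is, a Hopf monomial annihilated by multiplication by $e$. Any preimage in $H^3(B\Bgrouppos_4)$ under $\tau_4$ represents $t$, and its class in $\coker(\rho_4)$ is nonzero by exactness. For the other cells listed explicitly, an analogous verification shows $\ker(\partial) = 0$ in that degree, so $H^d(B\Bgrouppos_n) = \image(\rho_n)$ and no additional generator is needed beyond those listed.

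The main obstacle is the bookkeeping for the Hopf ring computations: expanding $e \cdot m$ via distributivity unpacks the coproduct $\Delta(m)$, which for larger Hopf monomials produces many Sweedler summands, each of which must be reduced using relations that differ between the cup product $\cdot$ (which vanishes across components by relation (1) of Theorem 3.2) and the transfer product $\odot$ (governed by mod $2$ binomial coefficients). Tracking components and profiles carefully is essential, but each individual reduction is mechanical, and for the modest range of $(n,d)$ required, the full computation remains tractable.
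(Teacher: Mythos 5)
Your proposal follows essentially the same route as the paper: split the Gysin sequence using Lemma~\ref{connectinghom} into short exact sequences with $H^d(B\Bgroup_n)/(e)$ on the left and $\ker(e\cdot\underline{\;\;})$ on the right, then compute multiplication by $e$ on the Hopf monomial basis via distributivity to read off image, kernel, and the exceptional class $t$ (arising from $\gamma_{2,1}\in\ker(e\cdot\underline{\;\;})$ in degree $3$ for $n=4$). The paper defers exactly this Hopf ring bookkeeping to its appendix, so no gap remains.
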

\begin{proof}
To prove this proposition, we observe that the Gysin sequence \ref{Gysin} splits into the short exact sequences
\[
0 \to \frac{H^d(B\Bgroup_n)}{(e)} \to H^d(B\Bgrouppos_n) \to \ker(e \cdot \_ \colon H^d(B\Bgroup_n) \to H^{d+1}(B\Bgroup_n)) \to 0.
\]
Therefore, a basis for $ H^d(B\Bgrouppos_n) $ is obtained by taking the union of the bases for $ H^d(B\Bgroup_n) $ and $ \ker(e \cdot \_ \colon H^d(B\Bgroup_n) \to H^{d+1}(B\Bgroup_n)) $. 
The product structure in $ H^*(B\Bgroup_n) $ is complicated but can be made explicit from the Hopf ring presentation. Hence, by a straightforward but rather long calculation, we can determine the multiplication by $ e $ on the Hopf monomial basis \ref{basis B} and thus determine the kernel and cokernel of this map. We defer these computations to Appendix \ref{appendix}.
\end{proof}

\section{Cohomology of \texorpdfstring{$ \overline{\Fl}_n(\mathbb{R}) $}{real complete flag manifolds}} \label{sec:sss}
In this section, we first prepare the ground for the general spectral sequence argument that we will use to compute $ H^*(\overline{\Fl}_n(\mathbb{R}) $.
Our starting points are the cohomology groups of $ B\Bgrouppos_n $, which we have calculated in low degrees in the previous section, and the cohomology groups of $SO(n)$, which we have already\ stated in Theorem \ref{chso3}. The following geometric construction is crucial in determining the differentials in spectral sequence. 

\begin{lemma} \label{geometric machinery d2}
Consider the fibration $ SO(n) \hookrightarrow SO(n)_{h\Bgrouppos_n} \twoheadrightarrow B\Bgrouppos_n $.
The following statements are true:
\begin{enumerate}
\item If $ \varGamma \colon [0, 1]^2 \to B\Bgrouppos_n $ is a mod $ 2 $ geometric cycle such that $ \forall 0 \leq t \leq 1: \varGamma(0, t) = \varGamma(1, 1-t) $, $ \varGamma(t, 1) = \varGamma(0, 1-t) $, then the image of any path in $ [0, 1]^2 $ from $ (0, 0) $ to $ (1, 1) $ via $ \varGamma $ represents an involution $ y \in \Bgrouppos_n = \pi_1(B\Bgrouppos_n) $. 
\item Let $ \varGamma $ be as in $ 1 $. If $ y $ has eigenvalue $ - 1 $ with multiplicity $ 2 $, then the homology class of $ \varGamma $ in $ H_2(B\Bgrouppos_n) $ transgresses to the fundamental class of the submanifold $ SO(2) $ in $ H_1(SO(n)) $.
\item Assume that $ \overline{\varGamma} \colon [0, 1]^2 \to B\Bgroup_n $ is a geometric chain satisfying the following hypotheses:
\begin{itemize}
\item $ \overline{\varGamma}(0, 0) = \overline{\varGamma}(0, 1) = \overline{\varGamma}(1, 0) = \overline{\varGamma}(1, 1) $;
\item $ \overline{\varGamma}(0, t) = \overline{\varGamma}(1, t) $ and $ \overline{\varGamma}(t, 0) = \overline{\varGamma}(t, 1) $ for all $ 0 \leq t \leq 1 $, and $ \overline{\varGamma}|_{[0, 1] \times \{0\}} $ represents an involution in $ \pi_1(B\Bgroup_n) $;
\item The two loops $ t \mapsto \overline{\varGamma}(t, 0) $ and $ t \mapsto \overline{\varGamma}(1, t) $ are conjugate in $ \Bgroup_n = \pi_1(B\Bgroup_n) $ via an element $ x \in \Bgroup_n $.
\end{itemize}
Then there is a mod $ 2 $ geometric cycle $ \varGamma \colon [0, 1]^2 \to B\Bgrouppos_n $ whose projection onto $ B\Bgroup_n $ is homologous to $ \overline{\varGamma} $ mod $ 2 $ and satisfying the condition in $ 1 $, with $ y $ equal to the loop obtained by restricting $ \overline{\varGamma} $ to the main diagonal of the square.
\end{enumerate}
\end{lemma}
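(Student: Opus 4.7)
The three parts are best handled separately, by distinct techniques.

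For part (1), I note that the stated identifications imply $\varGamma(0,0) = \varGamma(1,1)$ and $\varGamma(0,1) = \varGamma(1,0)$, so that any path in $[0, 1]^2$ from $(0,0)$ to $(1,1)$ becomes a loop in $B\Bgrouppos_n$ under $\varGamma$, and its class $y \in \pi_1(B\Bgrouppos_n) = \Bgrouppos_n$ is well-defined by simple-connectedness of the square. To show $y^2 = 1$, I will use the antidiagonal involution $\varphi(s,t) = (1-t, 1-s)$, which preserves the diagonal setwise while reversing its orientation. The first identification implies $\varGamma \circ \varphi = \varGamma$ on the left edge and the second implies the same on the top edge; the mod $2$ cycle condition then forces the residual discrepancy on the bottom and right edges to cancel homologically, so after passing to a cobordant cycle, $\varGamma$ can be taken $\varphi$-symmetric along the diagonal. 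Reversing the diagonal via $\varphi$ yields $y = y^{-1}$.

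For part (2), the plan is to lift $\varGamma$ to a 2-chain $\widetilde{\varGamma}$ in the total space $SO(n)_{h\Bgrouppos_n}$ via parallel transport from a chosen basepoint lift. The transgression sends $[\varGamma] \in H_2(B\Bgrouppos_n)$ to $[\partial \widetilde{\varGamma}] \in H_1(SO(n))$, the boundary lying in the fiber. Under the hypothesis on $y$, the rotation by $\pi$ sits in $SO(V) \cong SO(2)$ for the plane $V \subset \mathbb{R}^n$ of $(-1)$-eigenvectors, so the lift of the diagonal traces a path in $SO(V)$ from the identity to $y$ — exactly half of $SO(2)$. Invoking the $\varphi$-symmetry of part (1), the lift of the antidiagonal supplies the complementary half, and concatenating yields a loop winding once around $SO(2)$, namely the fundamental class.

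For part (3), I will construct $\varGamma$ by lifting $\overline{\varGamma}$ along the double cover $p \colon B\Bgrouppos_n \to B\Bgroup_n$. Simple-connectedness of $[0, 1]^2$ gives a lift $\widetilde{\varGamma}$ once a basepoint lift is fixed. The involution hypothesis on the bottom loop of $\overline{\varGamma}$ ensures that the double-cover monodromy around it is trivial, and the conjugation by $x \in \Bgroup_n$ between the bottom and right loops reconciles the boundary monodromies for the twisted identifications. A collar deformation of $\widetilde{\varGamma}$ then produces a $\varGamma$ realizing the flipped identifications of part (1) exactly, while the trace of the deformation provides a mod $2$ cobordism exhibiting $p \circ \varGamma$ and $\overline{\varGamma}$ as homologous.

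The main obstacle will be part (3): the combined use of the double cover, the involution hypothesis, and the conjugation element $x$ must be orchestrated so that the flipped boundary identifications hold on the nose and the associated cobordism genuinely realizes the mod $2$ homology. Once that is in place, parts (1) and (2) reduce respectively to the symmetry argument outlined above and a standard parallel-transport computation in the fiber bundle.
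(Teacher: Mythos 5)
Your three-part outline captures the right high-level geometry, but each part glosses over the step that the paper spends the most effort on, and in a couple of places the informal justification does not actually close.

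\textbf{Part (1).} You aim to upgrade the boundary identifications to a genuine $\varphi$-symmetry of $\varGamma$ (at least along the diagonal) and then deduce $y = y^{-1}$ from it. But the stated hypotheses only identify the boundary arcs of the square, and do not force $\varGamma\circ\varphi = \varGamma$ in the interior or even on the diagonal; the phrase ``after passing to a cobordant cycle'' is doing all the work and is not justified. Moreover, the conclusion you need is only an equality $y = y^{-1}$ in $\pi_1$, a much weaker statement than pointwise $\varphi$-symmetry. The paper avoids the issue entirely: since $y$ does not depend on the chosen path from $(0,0)$ to $(1,1)$, one compares the bottom-right boundary path with the top-left one; by the identification hypotheses the latter is the reverse of the former under $\varGamma$, so $y = y^{-1}$ directly.

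\textbf{Part (2).} To compute the transgression of $[\varGamma]$ you must exhibit a $2$-chain in the total space, representing a lift of $[\varGamma]$, whose boundary lies in a single fiber and then identify the class of that boundary. A naive parallel-transport lift of $\varGamma$ from a basepoint does \emph{not} automatically have its boundary in a fiber. You outsource this to the $\varphi$-symmetry argued in part (1), but since that symmetry was never established, neither is the claim that the lifts of the diagonal and antidiagonal assemble into a loop tracing $SO(2)$. The paper instead cuts the square along the main diagonal into two triangles, lifts each to $E\Bgrouppos_n$, and glues them back together with two auxiliary homotopies (using $y^2 = 1$) so that the boundary of the resulting $2$-chain sits in the fiber. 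Its class is computed explicitly as $[\ell''''] + [y\ell'''']$, and the eigenvalue hypothesis on $y$ is then used to pick $\ell''''$ inside a copy of $SO(2)$ and evaluate this class as $[SO(2)]$. None of this is present in your sketch.

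\textbf{Part (3).} This is where the gap is most concrete. You propose to lift $\overline{\varGamma}$ along the double cover $p\colon B\Bgrouppos_n \to B\Bgroup_n$ and claim that ``the involution hypothesis on the bottom loop ensures that the double-cover monodromy around it is trivial.'' That is false in general: the monodromy around the loop $a$ is the image of $a$ under the determinant (or sign) homomorphism $\Bgroup_n \to \mathbb{Z}/2$, and an involution in $\Bgroup_n$ can perfectly well have determinant $-1$ (e.g. a single sign change). When $a\notin\Bgrouppos_n$ the lift $\widetilde{\varGamma}$ does not close up into anything resembling a square with the required identifications, and no unspecified ``collar deformation'' produces one. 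The hypotheses do guarantee $\det(ab) = \det(a)\det(b) = \det(a)^2 = 1$, and that $(ab)^2 = 1$ since $a$ and $b$ commute (they are images of the torus generators) and are both involutions; but exploiting these facts requires the explicit construction that the paper carries out: lift $\overline{\varGamma}$ to $E\Bgroup_n$, attach two copies of homotopies $H_3$ (realizing $b = xax^{-1}$) and $H_4$ (realizing $a^2 = 1$) to form a larger chain $\tilde{\varGamma}^e$, check that its projection to $B\Bgrouppos_n$ is a cycle with the boundary identifications of part (1), and observe that the extra homotopies project in pairs so that the image in $B\Bgroup_n$ is homologous to $\overline{\varGamma}$ mod $2$. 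Your version would need to supply an explicit construction equivalent to this.
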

\begin{proof}
\begin{enumerate}
\item Choose a path $ \ell \colon [0, 1] \to [0, 1]^2 $ from $ (0, 0) $ to $ (1, 1) $. The hypotheses guarantees that $ \varGamma(0, 0) = \varGamma(1, 1) $, thus $ \varGamma \ell $ is a loop. Let $ y \in \pi_1(B\Bgrouppos_n) $ be its class. Since all such paths are homotopic in $ [0, 1]^2 $, $ y $ does not depend on $ \ell $.
By our hypotheses the left-top path in $ \partial([0, 1]^2) $ yields the inverse of the bottom-right path in $ \partial([0, 1]^2) $, hence this independence implies the equation $ y^{-1} = y $. It follows that $ y^2 = 1 $.
\item
To compute the transgression of $ [\varGamma] \in H_2(B\Bgrouppos_n) $, we first construct an explicit lifting $ \phi \colon [0, 1]^2 \to SO(n)_{h\Bgrouppos_n} $ of $ \varGamma $.

First, lift $ \varGamma $ to a continuous map $ \tilde{\varGamma} \colon [0, 1]^2 \to E\Bgrouppos_n $. Choose $ \ast = \tilde{\varGamma}(0, 0) $ as a basepoint for $ E\Bgrouppos_n $. Restricting 
$\tilde{\varGamma}$ to the bottom-right path in $ \partial([0, 1]^2) $ and to the diagonal of the square define two paths $ \ell' $ and $ \ell'' $, respectively. By our hypotheses, the left-top path is $ (y\ell')^{-1} $, where $ y $ acts on paths in $ E\Bgrouppos_n $ via the monodromy action.
Let $ \varGamma^+ $ and $ \varGamma^- $ be the restriction of $ \tilde{\varGamma} $ to the two triangles in which the square is cut by the diagonal (see Figure~\ref{fig:fig1}).
\begin{figure}[H]
    \centering
    \begin{tikzpicture}[line cap=round, line join=round, >=triangle 45, x=3cm, y=3cm]
\clip(-0.5, -0.5) rectangle (1.5, 1.5);
\draw [line width=1pt] (0, 0)-- (1, 0);
\draw [line width=1pt] (1, 0)-- (1, 1);
\draw [line width=1pt] (1, 1)-- (0, 1);
\draw [line width=1pt] (0, 1)-- (0, 0);
\draw [line width=1pt] (0, 0)-- (1, 1);
\draw [line width=1pt, -{Stealth[length=10pt]}] plot[domain=0:1.57, variable=\t] ({0.5 + 0.4*sin(\t r)}, {0.5 - 0.4*cos(\t r)});
\draw [line width=1pt, -{Stealth[length=10pt]}] plot[domain=0:1.57, variable=\t] ({0.5 - 0.4*sin(\t r)}, {0.5 + 0.4*cos(\t r)});
\begin{scriptsize}
\draw [fill=black] (0, 0) circle (1.5pt);
\draw [fill=black] (1, 1) circle (1.5pt);
\draw [color=black] (-0.05, -0.05) node {$\ast$};
\draw [color=black] (1.1, 1.1) node {$ y(\ast) $};
\draw [color=black] (0.9, 0.2) node {$\ell'$};
\draw [color=black] (0.2, 0.9) node {$y\ell'$};
\draw [color=black] (0.3, 0.4) node {$\ell'' $};
\draw [color=black] (0.7, 0.3) node {$\varGamma^-$};
\draw [color=black] (0.3, 0.7) node {$\varGamma^+$};
\end{scriptsize}
\end{tikzpicture}
    \caption{Illustrations of the paths $\ell'$, $\ell''$, $y\ell'$, and the triangles $\varGamma^+$ and $\varGamma^-$.}
    \label{fig:fig1}
\end{figure}

Let $ \cdot $ denote the concatenation of loops. Choose a path $ \ell''' $ from $ \ast $ to $ y(\ast) $ in $ E\Bgrouppos_n $. The paths $ (y\ell''') \cdot \tilde{\varGamma}\ell' $ and $ (y\tilde{\varGamma}\ell'') \cdot \ell''' $ are homotopic relative to endpoints. Therefore, there exists a homotopy $ H_1 $ from $ \tilde{\varGamma}\ell' $ and $ y\tilde{\varGamma}\ell'' $ whose restriction to the two endpoints is $ \ell''' $ and $ y \ell''' $, respectively.
Similarly, since $ y ^2 = 1 $ in $ \Bgrouppos_n $, there is a homotopy $ H_2 $ between $ (y\ell''') \cdot \ell''' $ and the constant path relative to endpoints.
We glue together copies of $ H_1 $, $ H_2 $, $ \varGamma^+ $ and $ \varGamma^- $ as depicted in Figure~\ref{fig:efig2} to obtain a continuous map $ \phi_1 \colon [0, 1]^2 \to E\Bgrouppos_n $.
\begin{figure}[H]
    \centering
    \begin{tikzpicture}[line cap=round, line join=round, >=triangle 45, x=3cm, y=3cm]
\clip(-0.5, -0.5) rectangle (2.5, 2.5);
\draw [line width=1pt] (0, 2)-- (0, 0);
\draw [line width=1pt] (1.5, 0)-- (2, 0);
\draw [line width=1pt] (0, 0)-- (0.5, 0);
\draw [line width=1pt, -{Stealth[length=10pt]}] (0.5, 0)-- (1.5, 0);
\draw [line width=1pt] (2, 0)-- (2, 2);
\draw [line width=1pt] (2, 2)-- (1.5, 2);
\draw [line width=1pt] (0, 2)-- (0.5, 2);
\draw [line width=1pt, -{Stealth[length=10pt]}] (1.5, 2)-- (0.5, 2);
\draw [line width=1pt, -{Stealth[length=10pt]}] (0.5, 2)-- (0.5, 1);
\draw [line width=1pt, -{Stealth[length=10pt]}] (0.5, 1)-- (0.5, 0);
\draw [line width=1pt, -{Stealth[length=10pt]}] (1.5, 2)-- (1.5, 1);
\draw [line width=1pt, -{Stealth[length=10pt]}] (1.5, 1)-- (1.5, 0);
\draw [line width=1pt, -{Stealth[length=10pt]}] (0.5, 1)-- (1.5, 1);
\draw [line width=1pt] (0.5, 2)-- (1, 1.5);
\draw [line width=1pt] (1, 1.5)-- (1.5, 2);
\draw [line width=1pt] (0.5, 1)-- (1, 0.5);
\draw [line width=1pt] (1, 0.5)-- (1.5, 1);
\draw [shift={(1, 2)}, line width=1pt, -{Stealth[length=10pt]}] plot[domain=3.862563977561689:5.591263924940729, variable=\t]({1*0.29287843515318424*cos(\t r)+0*0.29287843515318424*sin(\t r)}, {0*0.29287843515318424*cos(\t r)+1*0.29287843515318424*sin(\t r)});
\draw [shift={(1, 1)}, line width=1pt, -{Stealth[length=10pt]}] plot[domain=3.854202968383322:5.568778752608255, variable=\t]({1*0.30249987246196647*cos(\t r)+0*0.30249987246196647*sin(\t r)}, {0*0.30249987246196647*cos(\t r)+1*0.30249987246196647*sin(\t r)});
\begin{scriptsize}
\draw [fill=black] (0, 0) circle (1.5pt);
\draw[color=black] (0.030370370370371394, -0.0841975308641966) node {$y(\ast)$};
\draw [fill=black] (2, 0) circle (1.5pt);
\draw[color=black] (2.0333333333333354, -0.0841975308641966) node {$\ast$};
\draw [fill=black] (2, 2) circle (1.5pt);
\draw[color=black] (2.0333333333333354, 2.0792592592592576) node {$\ast$};
\draw [fill=black] (0, 2) circle (1.5pt);
\draw[color=black] (0.030370370370371394, 2.0792592592592576) node {$y(\ast)$};
\draw [fill=black] (0.5, 2) circle (1.5pt);
\draw[color=black] (0.5320987654321001, 2.0792592592592576) node {$y(\ast)$};
\draw [fill=black] (0.5, 0) circle (1.5pt);
\draw[color=black] (0.5320987654321001, -0.0841975308641966) node {$y(\ast)$};
\draw [fill=black] (1.5, 0) circle (1.5pt);
\draw[color=black] (1.5316049382716066, -0.0841975308641966) node {$\ast$};
\draw [fill=black] (1.5, 2) circle (1.5pt);
\draw[color=black] (1.5316049382716066, 2.0792592592592576) node {$\ast$};
\draw[color=black] (1.0140740740740755, -0.081456790123456879) node {$y\ell''$};
\draw[color=black] (1.0140740740740755, 2.0808641975308626) node {$\ell''$};
\draw [fill=black] (0.5, 1) circle (1.5pt);
\draw[color=black] (0.5820987654321001, 1.0837037037037023) node {$\ast$};
\draw [fill=black] (1.5, 1) circle (1.5pt);
\draw[color=black] (1.6616049382716066, 1.0837037037037023) node {$y(\ast)$};
\draw[color=black] (1.0140740740740755, 1.104938271604937) node {$\ell''$};
\draw[color=black] (0.9982716049382732, 1.6604938271604923) node {$y\ell'$};
\draw[color=black] (1, 0.6276543209876531) node {$\ell'$};
\draw[color=black] (0.2, 1) node {$yH_2$};
\draw[color=black] (1.8, 1) node {$H_2$};
\draw[color=black] (1, 1.3) node {$yH_1$};
\draw[color=black] (1, 0.3) node {$H_1$};
\draw[color=black] (1, 0.8) node {$\varGamma_1^-$};
\draw[color=black] (1, 1.8) node {$\varGamma_1^+$};
\draw[color=black] (0.4, 1.5) node {$y\ell'''$};
\draw[color=black] (0.4, 0.5) node {$\ell'''$};
\draw[color=black] (1.6, 1.5) node {$\ell'''$};
\draw[color=black] (1.6, 0.5) node {$y\ell'''$};
\end{scriptsize}
\end{tikzpicture}
    \caption{Gluing of $H_1$, $H_2$, $\varGamma^+$, and $\varGamma^-$.}
    \label{fig:efig2}
\end{figure}

Second, we choose a path $ \ell'''' $ from $ 1_{SO(n)} $ to $ y $ in $ SO(n) $ and we let $ \phi_2 \colon [0, 1]^2 \to SO(n) $ be a map that behaves on $ \partial ([0, 1]^2) $ as depicted in Figure~\ref{fig:fig3}. Note that such a map exists because the loop described on the boundary of the square is nullhomotopic in $ SO(n) $.
\begin{figure}[H]
    \centering
    \begin{tikzpicture}[line cap=round, line join=round, >=triangle 45, x=3cm, y=3cm]
\clip(-0.5, -0.5) rectangle (1.5, 1.5);
\draw [line width=1pt, -{Stealth[length=10pt]}] (0, 0)-- (1, 0);
\draw [line width=1pt] (1, 0)-- (1, 1);
\draw [line width=1pt, -{Stealth[length=10pt]}] (0, 0)-- (0, 0.5);
\draw [line width=1pt, -{Stealth[length=10pt]}] (0, 0.5)-- (0, 1);
\draw [line width=1pt, -{Stealth[length=10pt]}] (1, 1)-- (0, 1);
\begin{scriptsize}
\draw [fill=black] (0, 0) circle (1.5pt);
\draw [fill=black] (1, 0) circle (1.5pt);
\draw [fill=black] (1, 1) circle (1.5pt);
\draw [fill=black] (0, 1) circle (1.5pt);
\draw [fill=black] (0, 0.5) circle (1.5pt);
\draw [color=black] (-0.1, -0.1) node {$ y $};
\draw [color=black] (-0.1, 1.1) node {$ y $};
\draw [color=black] (1.1, -0.1) node {$1_{SO(n)} $};
\draw [color=black] (1.1, 1.1) node {$1_{SO(n)}$};
\draw [color=black] (0.2, 0.5) node {$1_{SO(n)} $};
\draw [color=black] (0.5, 1.1) node {$\ell''''$};
\draw [color=black] (0.5, -0.1) node {$y \ell'''' $};
\draw [color=black] (0.1, 0.75) node {$\ell''''$};
\draw [color=black] (0.2, 0.25) node {$y\ell''''$};
\draw [color=black] (0.8, 0.5) node {$1_{SO(n)}$};
\end{scriptsize}
\end{tikzpicture}
    \caption{Depiction of the map $\phi_2$.}
    \label{fig:fig3}
\end{figure}

The projection of $ \phi = \phi_1 \times \phi_2 $ onto $ (E\Bgrouppos_n \times SO(n))/\Bgrouppos_n $ onto $ B\Bgrouppos_n $ is homologous to $ \varGamma $ mod $ 2 $ because the two copies of $ H_3 $ and $ H_4 $ project to twice the same chain. Moreover, we note that the top and bottom edges of the square have the same image under $ \phi_1 $, but with opposite orientations.
by gluing them we obtain a geometric cocycle supported on a M\"obius strip, whose boundary lies in a fiber $ SO(n) $.

We deduce that $ [\varGamma] $ transgresses to $ [\partial(\phi)] = [\ell''''] + [y\ell''''] $. If the involution $ y $ has eigenvalue $ - 1 $ with multiplicity $ 2 $, then it is conjugate to the image $ \tau $ of $ -1 \in SO(2) $ by the standard inclusion $ SO(2) \hookrightarrow SO(n) $. Since $ SO(n) $ is path connected, every inner automorphism of $ SO(n) $ acts trivially on $ H_*(SO(n)) $. We can therefore assume without loss of generality that $ y = \tau $, for which we can choose $ \ell'''' $ to be a path in $ SO(2) $ and we deduce that $ [\ell''''] + [\tau \ell''''] = [SO(2)] $.

\item To simplify the notation, let $ a = [t \in [0, 1] \mapsto \overline{\varGamma}(t, 0)] $ and $ b = [t \in [0, 1] \mapsto \overline{\varGamma}(1, t)] $ in $ \pi_1(B\Bgroup_n) $.
Let $ \tilde{\varGamma} \colon [0, 1]^2 \to E\Bgroup_n $ be a lifting of $ \overline{\varGamma} $, and choose $ \ast = \tilde{\varGamma}(0, 0) $ as a basepoint for $ E\Bgroup_n $.
Fix a path $ \ell_x $ from $ \ast $ to $ x(\ast) $. Let $ \ell_a $ and $ \ell_b $ be $ \tilde{\varGamma}|_{[0, 1] \times \{0\}} $ and $ \tilde{\varGamma}|_{\{0\} \times [0, 1]} $, respectively. By hypothesis $ b = xax^{-1} $, so there must be a homotopy $ H_3 $ between the paths $ \ell_b $ and $ x\ell_a $ whose restriction to the two endpoint are $ \ell_x $ and $ b \ell_x $. Moreover, since $ a^2 = 1 $, there is a homotopy $ H_4 $ between the paths $ \ell_a $ and $ (a \ell_a)^{-1} $ relative to their endpoints. We combine $ \tilde{\varGamma} $ and two copies of $ H_3 $ and $ H_4 $ as depicted in Figure~\ref{fig:fig4} to obtain a geometric chain $ \tilde{\varGamma}^e $ in $ E\Bgroup_n $.

\begin{figure}[H]
    \centering
    \begin{tikzpicture}[line cap=round, line join=round, >=triangle 45, x=1.6cm, y=1.6cm]
\clip(-2.5, -1) rectangle (4, 3);
\draw [line width=1pt, -{Stealth[length=10pt]}] (0, 2)-- (0, 0);
\draw [line width=1pt, -{Stealth[length=10pt]}] (0, 0)-- (2, 0);
\draw [line width=1pt, -{Stealth[length=10pt]}] (2, 0)-- (2, 2);
\draw [line width=1pt, -{Stealth[length=10pt]}] (0, 2)-- (2, 2);
\draw [line width=1pt, -{Stealth[length=10pt]}] (0, 0)-- (-1, 0);
\draw [line width=1pt, -{Stealth[length=10pt]}] (0, 2)-- (-1, 2);
\draw [line width=1pt, -{Stealth[length=10pt]}] (2, 2)-- (3, 2);
\draw [line width=1pt, -{Stealth[length=10pt]}] (3, 0)-- (3, 2);
\draw [line width=1pt, -{Stealth[length=10pt]}] (-1, 0)-- (-1, 2);
\draw [shift={(0, 1)}, line width=1pt, -{Stealth[length=10pt]}] plot[domain=2.356194490192345:3.9269908169872414, variable=\t]({1*1.4142135623730951*cos(\t r)+0*1.4142135623730951*sin(\t r)}, {0*1.4142135623730951*cos(\t r)+1*1.4142135623730951*sin(\t r)});
\draw [shift={(1, 1)}, line width=1pt, -{Stealth[length=10pt]}] plot[domain=0.7853981633974483:2.356194490192345, variable=\t]({1*1.4142135623730951*cos(\t r)+0*1.4142135623730951*sin(\t r)}, {0*1.4142135623730951*cos(\t r)+1*1.4142135623730951*sin(\t r)});
\draw [line width=1pt, -{Stealth[length=10pt]}] (2, 0)-- (3, 0);
\begin{scriptsize}
\draw [fill=black] (0, 0) circle (1.5pt);
\draw[color=black] (0, -0.2) node {$\ast$};
\draw [fill=black] (2, 0) circle (1.5pt);
\draw[color=black] (2, -0.2) node {$a(\ast)$};
\draw [fill=black] (2, 2) circle (1.5pt);
\draw[color=black] (2.25, 2.2) node {$ab(\ast)$};
\draw [fill=black] (0, 2) circle (1.5pt);
\draw[color=black] (-0.15, 2.2) node {$b(\ast)$};
\draw [fill=black] (-1, 0) circle (1.5pt);
\draw[color=black] (-1.2, -0.2) node {$ax(\ast)$};
\draw[color=black] (0.2, 1) node {$\ell_b$};
\draw[color=black] (1, 0.2) node {$\ell_a$};
\draw[color=black] (1.8, 1) node {$a \ell_b$};
\draw[color=black] (1, 1.8) node {$b \ell_a$};
\draw[color=black] (-0.5, 0.2) node {$\ell_x$};
\draw [fill=black] (-1, 2) circle (1.5pt);
\draw[color=black] (-1.2, 2.2) node {$xa(\ast)$};
\draw [fill=black] (3, 0) circle (1.5pt);
\draw[color=black] (3.2, -0.2) node {$ax(\ast)$};
\draw [fill=black] (3, 2) circle (1.5pt);
\draw[color=black] (3.2, 2.2) node {$bx(\ast)$};
\draw[color=black] (-0.5, 1.8) node {$b\ell_x$};
\draw[color=black] (2.5, 1.8) node {$ab\ell_x$};
\draw[color=black] (3.35, 1) node {$ax\ell_a$};
\draw[color=black] (-0.85, 1.45) node {$x \ell_a$};
\draw[color=black] (-1.75, 0.7) node {$xa\ell_a$};
\draw[color=black] (1, 2.55) node {$ab\ell_a$};
\draw[color=black] (2.5, 0.2) node {$a\tilde{\ell}$};
\draw[color=black] (1, 1) node {$\tilde{\varGamma}$};
\draw[color=black] (2.5, 1) node {$aH_3$};
\draw[color=black] (-0.5, 1) node {$H_3$};
\draw[color=black] (1, 2.2) node {$bH_4$};
\draw[color=black] (-1.22, 1) node {$xH_4$};
\end{scriptsize}
\end{tikzpicture}
    \caption{Gluing of $\tilde{\varGamma}$ and two copies of $H_3$ and $H_4$.}
    \label{fig:fig4}
\end{figure}
The projection $ \varGamma $ of $ \tilde{\varGamma}^e $ in $ B\Bgrouppos_n $ is a geometric cycle. Moreover, the domain of $ \tilde{\varGamma}^e $ can be deformed homeomorphically to a square such that the hypothesis of $ 1 $ is satisfied.
Its projection in $ B\Bgroup_n $ is homologous to $ \overline{\varGamma} $ mod $ 2 $ because the two copies of $ H_3 $ and $ H_4 $ cancel out.
\end{enumerate}
\end{proof}

\subsection{The cohomology of \texorpdfstring{$ \overline{\Fl}_3(\mathbb{R}) $}{text}} \label{coh3}
By the Serre spectral sequence, Theorem \ref{chso3} and Proposition \ref{cohomology Bpos}, the $E_2$ page of the Serre spectral sequence associated to the fibration 
\begin{equation} \label{ss3}
    SO(3) \rightarrow SO(3)_{h\Bgrouppos_3} \rightarrow B\Bgrouppos_3
\end{equation} is described in Figure~\ref{fig:spectral3}.
\newline
\begin{figure}
    \centering
    \begin{tikzpicture} 
 \matrix (m) [matrix of math nodes, 
 nodes in empty cells, nodes={minimum width=5ex, 
 minimum height=2ex, outer sep=-5pt}, 
 column sep=1ex, row sep=1ex]{
 &   &   &   &   &   &\\
 & 0 & 0 & 0 & 0 & \cdots \\
 & \mathbb{F}_2 & \mathbb{F}_2 & \mathbb{F}^2_2 & \mathbb{F}^3_2 & \cdots  \\
 & \mathbb{F}_2 &\mathbb{F}_2 & \mathbb{F}^2_2 & \mathbb{F}^3_2 & \cdots  \\
 & \mathbb{F}_2 &\mathbb{F}_2 & \mathbb{F}^2_2 & \mathbb{F}^3_2 & \cdots  \\
 & \mathbb{F}_2 &\mathbb{F}_2 & \mathbb{F}^2_2 & \mathbb{F}^3_2 & \cdots  \\
 \quad\strut & & & & & \strut \\};
 \draw[-stealth] (m-2-2.south east) -- (m-3-4.north west);
 \draw[-stealth](m-2-3.south east) -- (m-3-5.north west);
 \draw[-stealth] (m-3-2.south east) -- (m-4-4.north west);
 \draw[-stealth] (m-3-3.south east) -- (m-4-5.north west);
 \draw[-stealth] (m-4-2.south east) -- (m-5-4.north west);
 \draw[-stealth] (m-4-3.south east) -- (m-5-5.north west);
 \draw[-stealth] (m-5-2.south east) -- (m-6-4.north west);
 \draw[-stealth] (m-5-3.south east) -- (m-6-5.north west);
 \draw[thick] (m-1-1.east) -- (m-7-1.east) ;
\draw[thick] (m-7-1.north) -- (m-7-6.north) ;
\end{tikzpicture}
    \caption{$E_2$-page of the spectral sequence associated to \ref{ss3}.}
    \label{fig:spectral3}
\end{figure}
\begin{lemma} \label{d2bfl3}
$ d_2^{(0, 1)}(\beta_1) = \gamma_{1, 1}^2 \odot 1_1 + \alpha_2 \odot 1_1 $
\end{lemma}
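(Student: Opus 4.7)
My plan is to compute $d_2^{(0,1)}(\beta_1) \in H^2(B\Bgrouppos_3)$ by pairing it with a basis of $H_2(B\Bgrouppos_3)$. The general duality between the cohomology and homology spectral sequences of a fibration converts the transgression $d_2^{(0,1)}$ into the dual of the homological transgression $\tau \colon H_2(B\Bgrouppos_n) \to H_1(SO(n))$. Since $\beta_1 \in H^1(SO(3))$ is the Kronecker dual of the cycle $[SO(2)] \in H_1(SO(3))$ coming from the standard inclusion, for any cycle $[\varGamma] \in H_2(B\Bgrouppos_3)$ we have
\[
\langle d_2^{(0,1)}(\beta_1), [\varGamma] \rangle \;=\; \langle \beta_1, \tau([\varGamma]) \rangle.
\]
Thus it suffices to produce geometric cycles $\varGamma_1, \varGamma_2 \in H_2(B\Bgrouppos_3)$ whose classes form a dual basis to $\{\gamma_{1,1}^2 \odot 1_1,\, \alpha_2 \odot 1_1\}$ (the basis of $H^2(B\Bgrouppos_3)$ exhibited in Proposition~\ref{cohomology Bpos}) and to check that $\tau([\varGamma_i]) = [SO(2)]$ for both $i = 1, 2$.

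To ensure the latter, I would build $\varGamma_1$ and $\varGamma_2$ via Lemma~\ref{geometric machinery d2}(3). Namely, I would start from a $2$-chain $\overline{\varGamma}_i \colon [0,1]^2 \to B\Bgroup_3$ whose restrictions to the two pairs of opposite sides represent commuting involutions $a_i, b_i \in \Bgroup_3$ (so the conjugator $x_i$ can be taken to be $b_i$ itself or the identity, and the hypotheses of part (3) are automatic). The loop obtained on the main diagonal then represents $y_i = a_i b_i$, and I would choose $a_i, b_i$ so that $y_i$ is an involution in $\Bgrouppos_3$ having eigenvalue $-1$ with multiplicity $2$ — for example, with the diagonal loop corresponding to an involution conjugate to $\mathrm{diag}(1,-1,-1)$. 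Under these choices, Lemma~\ref{geometric machinery d2}(2) gives $\tau([\varGamma_i]) = [SO(2)]$, which pairs to $1$ with $\beta_1$.

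The substantive step — and the one I expect to be the main obstacle — is to choose $\overline{\varGamma}_1, \overline{\varGamma}_2$ so that their classes, after projecting to $B\Bgrouppos_3$, give the correct dual basis. I would use the Thom-class interpretation of the cohomology generators recalled in \S\ref{sec:recollection}: $\gamma_{1,1}^2 \odot 1_1$ is represented by a codimension-$2$ submanifold encoding a pair of points agreeing in their first two coordinates, while $\alpha_2 \odot 1_1$ is represented by a pair of points whose first two coordinates both vanish. Intersection numbers with these Thom submanifolds can be computed directly on the configuration model of $E\Bgroup_3$ by a transversality argument using the two facts listed in \S\ref{sec:recollection}. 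I would choose the two chains so that:
\begin{itemize}
\item $\overline{\varGamma}_1$ traces a loop in $E\Bgroup_3$ in which one pair of points moves so as to cross the diagonal $\{x_1 = x_1',\ x_2 = x_2'\}$ transversely in a single point while avoiding the locus $\{x_1 = x_1' = 0\}$, giving pairings $(1, 0)$ with the two cohomology generators;
\item $\overline{\varGamma}_2$ traces a loop in which a pair of points crosses the locus $\{x_1 = x_1' = 0\}$ once, transversely, while contributing trivially to the diagonal Thom class, giving pairings $(0, 1)$.
\end{itemize}

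Finally, combining the construction with the duality formula gives $\langle d_2^{(0,1)}(\beta_1), [\varGamma_i]\rangle = 1$ for $i=1,2$, so $d_2^{(0,1)}(\beta_1)$ must be the unique class pairing to $1$ on both dual-basis elements, namely $\gamma_{1,1}^2 \odot 1_1 + \alpha_2 \odot 1_1$. The delicate part is the explicit transversality bookkeeping to certify the pairing matrix; once this is established, the rest is a formal consequence of Lemma~\ref{geometric machinery d2} and Poincaré duality in $SO(3)$.
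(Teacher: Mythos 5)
Your overall strategy coincides with the paper's: dualize $d_2^{(0,1)}$ to the homological transgression, build geometric $2$-cycles in $B\Bgrouppos_3$ via Lemma \ref{geometric machinery d2}, check that each transgresses to $[SO(2)]$, and evaluate the pairing with $\{\gamma_{1,1}^2\odot 1_1,\ \alpha_2\odot 1_1\}$ by counting transverse intersections with the representing Thom submanifolds $\altmathcal{M}_1$, $\altmathcal{M}_2$ in $B\Bgroup_3$. The cycle you describe for detecting $\alpha_2\odot 1_1$ is essentially the paper's $\overline{\varGamma}_1$ (two concentric half-circles through the origin plus a distant fixed point, with $x=(1,2)$ as the conjugator), and this part of your plan goes through.

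There is, however, a genuine obstruction to the other half of your plan. You propose to build the cycle dual to $\gamma_{1,1}^2\odot 1_1$ also via Lemma \ref{geometric machinery d2}(3), i.e.\ starting from a chain $\overline{\varGamma}\colon[0,1]^2\to B\Bgroup_3$ with torus-type boundary identifications coming from two commuting involutions. Any such $\overline{\varGamma}$ represents $f_*[T^2]$ for a map $f\colon T^2\to B\Bgroup_3$, and the output of part (3) is a cycle in $B\Bgrouppos_3$ whose pushforward to $B\Bgroup_3$ is homologous to $f_*[T^2]$. Now a Hopf-ring computation shows $\gamma_{1,1}^2\odot 1_1=(\gamma_{1,1}\odot 1_1)^2$ is the cup square of a degree-one class, so
\[
\langle \gamma_{1,1}^2\odot 1_1,\ f_*[T^2]\rangle=\langle \bigl(f^*(\gamma_{1,1}\odot 1_1)\bigr)^2,[T^2]\rangle=0,
\]
since every square of a one-dimensional class vanishes in $H^*(T^2;\mathbb{F}_2)$. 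So no torus-type cycle can pair nontrivially with $\gamma_{1,1}^2\odot 1_1$, and your $(1,0)$-row of the pairing matrix cannot be achieved this way. This is exactly why the paper does not use part (3) for the second cycle: it instead takes $\varGamma_2$ to be defined directly on the half-sphere $S^2_+$ (an $\mathbb{RP}^2$-type cycle consisting of an antipodal pair of points on a sphere away from the origin, plus a compensating third point), on which the square of the degree-one generator is nonzero, and applies part (2) of the lemma directly to compute its transgression. Supplying such an $\mathbb{RP}^2$-type representative is the missing idea in your proposal.
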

\begin{proof}
First, $ H^2(B\Bgrouppos_3)= \mathbb{F}_2\{\alpha_2 \odot 1_1, \gamma_{1, 1}^2 \odot 1_1 \} $.
These two elements are restrictions of the Thom classes of the submanifolds $ \altmathcal{M}_1 $ and $ \altmathcal{M}_2 $ in $ B\Bgroup_3 $, respectively:
\begin{gather*}
 \altmathcal{M}_1 =\{[(x_1, x_2, x_3)] \in B(\Bgroup_3): \text{the 1\textsuperscript{st} coordinates of two of them are zero} \} \\
\altmathcal{M}_2 =\{[(x_1, x_2, x_3)] \in B(\Bgroup_3): \text{the 1\textsuperscript{st} and 2\textsuperscript{nd} coordinates of two of them are equal} \}
\end{gather*}
Second, we consider the immersion
\begin{gather*}
\overline{\varGamma}_1:[0, 1]^2 \to B(\Bgroup_3); \ \overline{\varGamma}_1(t_1, t_2)= (c_1(t_1), c_2(t_2), p_3), \\
\mathrm{where} \quad c_1(t)= (\cos(\pi t), \sin(\pi t), 0, 0, \cdots), \\
 c_2(t)= ( 2\cos(\pi t), 2 \sin(\pi t), 0, 0, \cdots), 
\end{gather*}
and $p_3$ is a fixed point far away from the origin.
This is depicted graphically below.
\begin{figure}[H]
    \centering
    \begin{tikzpicture}[line cap=round, line join=round, >=triangle 45, x=1cm, y=1cm]
\begin{axis}[
x=1cm, y=1cm, 
axis lines=middle, 
xmin=-3, 
xmax=5, 
ymin=-1, 
ymax=3, 
xtick=\empty, 
ytick=\empty, ]
\clip(-3, -1) rectangle (5, 3);
\draw [line width=1pt] plot[domain=0:3.141592653589793, variable=\t]({1*1*cos(\t r)+0*1*sin(\t r)}, {0*1*cos(\t r)+1*1*sin(\t r)});
\draw [line width=1pt] plot[domain=0:3.141592653589793, variable=\t]({1*2*cos(\t r)+0*2*sin(\t r)}, {0*2*cos(\t r)+1*2*sin(\t r)});
\node at (0.4, -0.2) {\tiny $(0, 0)$};
\node at (0.35, 0.75) {\footnotesize $c_1$};
\node at (0.95, 1.45) {\footnotesize $c_2$};
\node at (3.95, -0.3) {\footnotesize $p_3$};
\begin{scriptsize}
\draw [fill=black] (4, 0) circle (1.5pt);
\end{scriptsize}
\end{axis}
\end{tikzpicture}
    \caption{Graphical depiction of the immersion $\varGamma_1$.}
    \label{fig:fig6}
\end{figure}

$\partial \overline{\varGamma}_1=0$ and the hypotheses of Lemma \ref{geometric machinery d2}.3 are satisfied with $ x = (1, 2) \in \mathcal{S}_3 \subseteq \Bgroup_3 $. The chain $ \varGamma_1 $ in $ B\Bgrouppos_3 $ provided by the lemma represents a homology class $ C_1 \in H_2(B\Bgrouppos_3) $ that transgresses to $ [SO(2)] $ by Lemma \ref{geometric machinery d2}.2. By \cite[Theorem 6.8]{JM01}, we deduce that $ d^2_{(2, 0)}(C_1) = [SO(2)] $.

We also define
\begin{align*}
    \varGamma_2 \colon S^2_+ &\to B(\Bgrouppos_3),  \\
(x_1, x_2, x_3) &\mapsto [((x_1, x_2, x_3, 0, \dots), (3+x_1, x_2, x_3, 0, \dots), (3-x_1, -x_2, -x_3, \dots))], 
\end{align*}
where $ S^2_+ $ is the half-sphere $ \{(x_1, x_2, x_3) \in \mathbb{R}^3: x_1^2 + x_2^2+x_3^2 = 1, x_3 \geq 0 \} $. See Figure~\ref{fig:fig7} for a graphical representation of $ \varGamma_2 $.
\begin{figure}
    \centering
    \begin{tikzpicture}[line cap=round, line join=round, x=1cm, y=1cm]
\begin{axis}[
x=1cm, y=1cm, 
axis lines=middle, 
xmin=-2, 
xmax=5, 
ymin=-2, 
ymax=2, 
xtick=\empty, 
ytick=\empty, ]
\draw [line width=1pt,->] plot[domain=-1.5:1.5, variable=\t]({\t}, {\t});
\draw [line width=1pt,dashed] plot[domain=0:3.141592653589793, variable=\t]({1*1*cos(\t r)+0*1*sin(\t r)}, {0*1*cos(\t r)+1*1*sin(\t r)});
\draw [line width=1pt,dashed] plot[domain=0:6.283185307, variable=\t]({1*1*cos(\t r)+0*0.3*sin(\t r)}, {0*1*cos(\t r)+1*0.3*sin(\t r)});
\draw [line width=1pt,dashed] plot[domain=0:6.283185307, variable=\t]({3+1*1*cos(\t r)+0*1*sin(\t r)}, {0*1*cos(\t r)+1*1*sin(\t r)});
\draw [line width=1pt,dashed] plot[domain=0:6.283185307, variable=\t]({3+1*1*cos(\t r)+0*0.3*sin(\t r)}, {0*1*cos(\t r)+1*0.3*sin(\t r)});

\node at (0.4, -0.2) {\tiny $(0, 0, 0)$};
\begin{scriptsize}
\draw [fill=black] (3/5,4/5) circle (2pt);
\draw [fill=black] (3+3/5,4/5) circle (2pt);
\draw [fill=black] (3-3/5,-4/5) circle (2pt);
\end{scriptsize}
\end{axis}
\end{tikzpicture}
    \caption{Graphical representation of $ \varGamma_2 $.}
    \label{fig:fig7}
\end{figure}
It is a mod $ 2 $ cycle and its homology class $ C_2 = [\varGamma_2] $ is such that $ d_2^{(2, 0)}(C_2) = [SO(2)] $ by Lemma \ref{geometric machinery d2}.2 and \cite[Theorem 6.8]{JM01}.

Third, we determine the pairing between $ C_1 $, $ C_2 $, and our cohomology classes.
Since $ \rho_3 $ is surjective in degree $ 2 $, by the naturality of the homology-cohomology pairing, we can perform calculations in $ B\Bgroup_3 $ instead of $ B\Bgrouppos_3 $. Here the representing submanifolds $ \altmathcal{M}_1 $ and $ \altmathcal{M}_2 $ are transverse to $ \overline{\varGamma}_1 $ and $ \overline{\varGamma}_2 $, hence we can compute the pairing by counting the number of intersections.
Explicitly
\[
\langle \alpha_2 \odot 1_1, C_1 \rangle_{\Bgrouppos_3} = \langle \alpha_2 \odot 1_1), [\overline{\varGamma}_1] \rangle_{\Bgrouppos_3} = \langle [\altmathcal{M}_1], [\overline{\varGamma}_1] \rangle_{\Bgroup_3} = \# \overline{\varGamma}_1^{-1}(\altmathcal{M}_1) = 1
\]
and similarly $ \langle \gamma_{1, 1}^2 \odot 1_1, C_1 \rangle = 0 $, $ \langle \alpha_2 \odot 1_1, C_2 \rangle = 0 $ and $ \langle \gamma_{1, 1}^2 \odot 1_1, C_2 \rangle = 1 $.
Since $ d_2^{(0, 1)} $ is the linear dual of $ d^2_{(2, 0)} $ and $ \langle \beta_1, [SO(2)] \rangle_{SO(n)} = 1 $, we deduce the lemma from the calculations of $ d_2(C_1) $ and $ d_2(C_2) $.
\end{proof}

Multiplicatively, $ E_2 $ is generated by the bottom row, on which $ d_2 $ is zero by degree reasons, and $ \beta_1 $. Therefore, $ d_2 $ is fully determined by Lemma \ref{d2bfl3}.
The $E_3$ page is given as follows
\begin{figure}[H]
    \centering
    \begin{tikzpicture}
 \matrix (m) [matrix of math nodes, 
 nodes in empty cells, nodes={minimum width=5ex, 
 minimum height=2ex, outer sep=-5pt}, 
 column sep=1ex, row sep=1ex]{
 & & & & & & \\
 & 0 & 0 & 0 & 0 & \cdots \\
 &\mathbb{F}_2 & \mathbb{F}_2 & \mathbb{F}^2_2 & \mathbb{F}^2_2 & \cdots \\
 &0 & 0 & 0 & 0 & \cdots \\
 &\mathbb{F}_2 & \mathbb{F}_2 & \mathbb{F}^2_2 & \mathbb{F}^2_2 & \cdots \\
 \quad\strut & & & & & \strut \\};
 \draw[-stealth] (m-2-2.south east) -- (m-4-5.north west);
 \draw[-stealth] (m-3-2.south east) -- (m-5-5.north west);
 \draw[-stealth] (m-3-3.south east) -- (m-5-6.north west);
 \draw[thick] (m-1-1.east) -- (m-6-1.east) ;
\draw[thick] (m-6-1.north) -- (m-6-6.north) ;
\end{tikzpicture}
    \caption{$E_3$-page of the spectral sequence associated to \ref{ss3}.}
    \label{fig:spectral3p3}
\end{figure}

\begin{lemma} \label{d3bfl3}
$ d_3^{(0, 2)}(\beta_1^2) = \alpha_3 $.
\end{lemma}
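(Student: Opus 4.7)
The plan is to apply the Kudo transgression theorem to reduce the computation of $d_3(\beta_1^2)$ to a Steenrod square calculation in $H^*(B\Bgrouppos_3)$. Since $|\beta_1| = 1$ we have $\beta_1^2 = \Sq^1 \beta_1$, and from Lemma \ref{d2bfl3} the class $\beta_1$ is transgressive on $E_2$ with $d_2(\beta_1) = \gamma_{1,1}^2 \odot 1_1 + \alpha_2 \odot 1_1$. Kudo's theorem therefore guarantees that $\beta_1^2$ survives to $E_3$ (its $d_2$ vanishes since $d_2(\beta_1^2) = 2\beta_1 \cdot d_2 \beta_1 = 0$ in characteristic $2$) and transgresses on $E_3$ with
\[
d_3(\beta_1^2) = \Sq^1\bigl(\gamma_{1,1}^2 \odot 1_1 + \alpha_2 \odot 1_1\bigr) \pmod{\operatorname{im} d_2}.
\]

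I would then compute this Steenrod square using the Cartan formula applied to both the cup product and the transfer product $\odot$; the latter is valid because $\Sq^1$ commutes both with external products and with transfer maps along finite covers, so $\Sq^1(x \odot y) = \Sq^1 x \odot y + x \odot \Sq^1 y$. The first summand vanishes since $\Sq^1(\gamma_{1,1}^2) = 2 \gamma_{1,1} \Sq^1 \gamma_{1,1} = 0$ in characteristic $2$ and $\Sq^1(1_1) = 0$. For the second, $\Sq^1(\alpha_2 \odot 1_1) = \Sq^1(\alpha_2) \odot 1_1$. To identify $\Sq^1 \alpha_2 \in H^3(B\Bgroup_2)$, I would use naturality under the inclusion $\iota \colon \Bgroup_1 \times \Bgroup_1 = (\mathbb{Z}/2)^2 \hookrightarrow \Bgroup_2$: the coproduct formula from Theorem \ref{cohob} yields $\iota^*(\alpha_2) = x_1 x_2$ in $\mathbb{F}_2[x_1, x_2]$, hence $\iota^*(\Sq^1 \alpha_2) = x_1^2 x_2 + x_1 x_2^2 = \iota^*(\alpha_1^2 \odot \alpha_1)$. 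Since every other Hopf monomial of degree $3$ in $H^*(B\Bgroup_2)$ involves the class $\gamma_{1,1}$, which pulls back to zero under $\iota^*$, this forces $\Sq^1 \alpha_2 = \alpha_1^2 \odot \alpha_1 + \kappa$ with $\kappa \in \ker \iota^*$.

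The final step is to match $(\alpha_1^2 \odot \alpha_1 + \kappa) \odot 1_1 \in H^3(B\Bgrouppos_3)$ with $\alpha_3$ modulo $\operatorname{im}(d_2)$. For this I would explicitly compute $d_2 \colon E_2^{1,1} \to E_2^{3,0}$ on its generator $(\gamma_{1,1} \odot 1_1) \otimes \beta_1$ via the Hopf ring distributivity law $x \cdot (y \odot z) = \sum_{(x)} (x_{(1)} y) \odot (x_{(2)} z)$, and then reduce the candidate expression modulo both this image and the kernel ideal $(e) = \ker \rho_3$ from Lemma \ref{connectinghom}, matching it against the basis of $E_3^{3,0}$ derived from Proposition \ref{cohomology Bpos}. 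The main obstacle is this final Hopf ring bookkeeping: the distributivity law combined with the relations among the $\alpha_k$ and $\gamma_{k,l}$ produces many competing simplifications in $H^3(B\Bgroup_3)$, and one must carefully verify that the surviving class in $E_3^{3,0}$ equals $\alpha_3$ rather than any other combination of basis elements.
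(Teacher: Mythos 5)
Your overall strategy is the same as the paper's: invoke the Kudo transgression theorem ([JM01, Cor.\ 6.9]) to reduce $d_3^{(0,2)}(\beta_1^2)$ to $\Sq^1(d_2^{(0,1)}\beta_1)$, then compute the Steenrod square in $H^*(B\Bgroup_3)$ and push forward to $H^*(B\Bgrouppos_3)$. Your Cartan-type formula $\Sq^1(x\odot y)=\Sq^1 x \odot y + x \odot \Sq^1 y$ is correct (transfers commute with Steenrod operations, and $\Sq^1$ satisfies the external Cartan formula), and your preimage $\gamma_{1,1}^2\odot 1_1 + \alpha_2\odot 1_1$ of $d_2\beta_1$ differs from the paper's choice $\gamma_{1,1}\odot\alpha_1+\alpha_2\odot 1_1$ by a class in the ideal $(e)$, so both lifts are legitimate.

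The gap is in how you determine $\Sq^1\alpha_2 \in H^3(B\Bgroup_2)$. Restriction along $\iota\colon(\mathbb{Z}/2)^2\hookrightarrow\Bgroup_2$ leaves $\Sq^1\alpha_2$ ambiguous modulo $\ker\iota^*$, and in $H^3(B\Bgroup_2)$ this kernel is \emph{two}-dimensional, spanned by $\gamma_{1,1}^3$ and $\gamma_{1,1}\alpha_2$ (both involve $\gamma_{1,1}$, which dies under $\iota^*$). That ambiguity is not cosmetic: the true value, from [LG21, Theorem 8.2], is $\Sq^1\alpha_2 = \alpha_1^2\odot\alpha_1 + \gamma_{1,1}\alpha_2$, and it is precisely the hidden $\gamma_{1,1}\alpha_2$ term that combines with $\alpha_1^2\odot\alpha_1\odot 1_1$ (via the relation $e\cdot(\alpha_2\odot 1_1)=\gamma_{1,1}\alpha_2\odot 1_1 + \alpha_3 + \alpha_1^2\odot\alpha_1\odot 1_1$) to leave $\alpha_3$ after killing $(e)$. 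Taking $\kappa=0$, for instance, would land you at $\alpha_3 + \gamma_{1,1}^3\odot 1_1$ in $E_3^{3,0}$, which is a different class. So your ``forces $\Sq^1\alpha_2 = \alpha_1^2\odot\alpha_1 + \kappa$'' does not actually force anything useful, and the closing remark that the remaining difficulty is ``Hopf ring bookkeeping'' understates the problem: no amount of bookkeeping will recover the value of $\kappa$ from the restriction to $(\mathbb{Z}/2)^2$ alone. You need another input, such as the explicit Steenrod square formulas for $H^*(B\Bgroup_n)$ in [LG21, Theorem 8.2] that the paper cites, or a Wu-type formula for $\alpha_2$ regarded as a top Stiefel--Whitney class.
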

\begin{proof}
As $ \beta_1 $ is $ 1 $-dimensional, $ \beta_1^2 = \Sq^1(\beta_1) $. Hence, by \cite[Corollary 6.9]{JM01}, 
\[
d_3^{(0, 2)}(\beta_1^2) = \Sq^1(d_2^{(0, 1)}(\beta_1) =\Sq^1(\alpha_2 \odot 1_1 + \gamma_{1, 1} \odot \alpha_1).
\]
Since the restriction map $ \rho_3 \colon H^*(B\Bgroup_3) \to H^*(B\Bgrouppos_3) $ preserve the Steenrod algebra action, $ \Sq^1(\alpha_2 \odot 1_1 + \gamma_{1, 1} \odot \alpha_1) $ can be computed in $ H^*(B\Bgroup_3) $ instead of $ B(\Bgrouppos_3) $. The Steenrod squares are computed in \cite[Theorem 8.2]{LG21}. This yields
\begin{gather*}
\Sq^1(\gamma_{1, 1} \odot \alpha_1 + \alpha_2 \odot 1_1) = \gamma_{1, 1} \alpha_2 \odot 1_1 + \alpha_1^2 \odot \alpha_1 \odot 1_1 + \gamma_{1, 1}^2 \odot \alpha_1 + \gamma_{1, 1} \odot \alpha_1^2 \\
= (\gamma_{1, 1} \odot 1_1 + \alpha_1 \odot 1_2) \cdot (\alpha_2 \odot 1_1 + \gamma_{1, 1} \odot \alpha_1) + \alpha_3.
\end{gather*}
Noting that $ \gamma_{1, 1} \odot 1_1 + \alpha_1 \odot 1_2 $ is zero in the cohomology of $ B\Bgrouppos_3 $ completes the proof.
\end{proof}

By Lemma \ref{d3bfl3}, $ d_3 $ on row $ 2 $ is the multiplication by $ \alpha_3 $, which is injective. Hence, the fourth page of the spectral sequence consists only of the bottom row, equal to $ H^*(B\Bgrouppos_3)/(\gamma_{1, 1} \odot \alpha_1 + \alpha_2 \odot 1_1, \alpha_3) $. $ \overline{\Fl}_3(\mathbb{R}) $ being $ 3 $-dimensional, this quotient is zero in degree greater than $3$. For degree at most $ 3 $, the quotient is explicitly computed from Theorem \ref{cohob}. In conclusion, the above arguments prove the following.
\begin{theorem} \label{main:n=3}
There is a ring isomorphism
\[
H^*(\overline{\Fl}_3(\mathbb{R})) \cong \frac{H^*(B\Bgrouppos_3)}{(\gamma_{1, 1} \odot 1_1 + \alpha_1 \odot 1_2, \gamma_{1, 1} \odot \alpha_1 + \alpha_2 \odot 1_1, \alpha_3)} \cong \frac{\mathbb{F}_2[x]}{(x^4)}, 
\]
where $ x $ is the pullback of $ \gamma_{1, 1} \in H^*(B\Bgroup_3) $ in the cohomology of $ \overline{\Fl}_3(\mathbb{R}) $.
\end{theorem}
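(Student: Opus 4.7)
The plan is to run the Serre spectral sequence of Proposition \ref{normalizer} applied to the fibration $SO(3) \to \overline{\Fl}_3(\mathbb{R}) \to B\Bgrouppos_3$,
\[ E_2^{p,q} = H^p(B\Bgrouppos_3) \otimes H^q(SO(3)) \Rightarrow H^{p+q}(\overline{\Fl}_3(\mathbb{R})), \]
where $H^*(SO(3)) = \mathbb{F}_2[\beta_1]/(\beta_1^4)$ by Theorem \ref{chso3} and the low-degree structure of $H^*(B\Bgrouppos_3)$ is tabulated in Proposition \ref{cohomology Bpos}. Because $E_2$ is multiplicatively generated over the bottom row by the transgressive class $\beta_1 \in E_2^{0,1}$, all of $d_2$ is determined by $d_2(\beta_1)$, and, via Kudo's transgression theorem combined with $\beta_1^2 = \Sq^1 \beta_1$, all of $d_3$ is determined by $d_3(\beta_1^2) = \Sq^1 d_2(\beta_1)$.

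First I would invoke Lemma \ref{d2bfl3} to identify $d_2(\beta_1) = e' := \gamma_{1,1}^2 \odot 1_1 + \alpha_2 \odot 1_1$. The Leibniz rule in characteristic $2$ yields $d_2(\beta_1^2) = 0$, so $d_2$ is zero on rows $0$ and $2$, while on rows $1$ and $3$ it acts, up to the factor $\beta_1^2$, as multiplication by $e'$ on the base cohomology. I would then verify by a direct Hopf-ring computation, using Theorem \ref{cohob} and the explicit basis in Proposition \ref{cohomology Bpos}, that multiplication by $e'$ is injective on $H^*(B\Bgrouppos_3)$ in the range of interest (which is bounded because $\overline{\Fl}_3(\mathbb{R})$ is three-dimensional). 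Consequently rows $1$ and $3$ of $E_3$ vanish, while rows $0$ and $2$ both become $H^*(B\Bgrouppos_3)/(e')$, with row $2$ shifted by $\beta_1^2$.

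Next I would apply Lemma \ref{d3bfl3} to get $d_3(\beta_1^2) = \alpha_3$, so $d_3$ on row $2$ is multiplication by $\alpha_3$ landing in row $0$. A similar finite check using Proposition \ref{cohomology Bpos} shows this map is injective on $H^*(B\Bgrouppos_3)/(e')$ in the degrees that matter, so row $2$ is killed at $E_4$, and no higher differential can be supported on the sole remaining bottom row. Hence $E_\infty = E_4$ is the bottom row, equal to $H^*(B\Bgrouppos_3)/(e', \alpha_3)$.

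Finally I would compute this quotient explicitly in degrees $\leq 3$. Setting $x := \gamma_{1,1} \odot 1_1 \in H^1(B\Bgrouppos_3)$ and using the Gysin-derived relation $\gamma_{1,1} \odot 1_1 = \alpha_1 \odot 1_2$ from Lemma \ref{connectinghom}, together with the Hopf-ring distributivity of Theorem \ref{cohob} to evaluate $x^2$ and $x^3$, every basis element of Proposition \ref{cohomology Bpos} in degree at most $3$ either reduces to a power of $x$ modulo $(e', \alpha_3)$ or is killed; in degree $4$ the dimensional constraint forces $x^4 = 0$, yielding $\mathbb{F}_2[x]/(x^4)$. The main obstacle is the two injectivity verifications (for multiplication by $e'$ and by $\alpha_3$), since these amount to concrete but somewhat delicate computations in the Hopf ring $A_B$; once they are settled, the remainder of the argument is a clean collapse-and-read-off of the spectral sequence.
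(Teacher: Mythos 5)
Your proposal follows essentially the same route as the paper's own argument: invoke Lemma \ref{d2bfl3} to identify $d_2(\beta_1)$, then use the Kudo transgression theorem ($\beta_1^2 = \Sq^1\beta_1$, via Lemma \ref{d3bfl3}) to get $d_3(\beta_1^2) = \alpha_3$, verify the two injectivity statements so the spectral sequence collapses to the bottom row at $E_4$, and finally identify that bottom row with $H^*(B\Bgrouppos_3)/(\gamma_{1,1}\odot\alpha_1 + \alpha_2\odot 1_1, \alpha_3)$ and read it off explicitly as $\mathbb{F}_2[x]/(x^4)$ using the relation $\gamma_{1,1}\odot 1_1 = \alpha_1 \odot 1_2$ from Lemma \ref{connectinghom}. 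You are slightly more explicit than the paper about why rows $1$ and $3$ vanish at $E_3$ (injectivity of multiplication by $d_2(\beta_1)$), a point the paper leaves to the $E_3$ figure and the appendix computations; this is a reasonable place to be careful, and both resolve it the same way.
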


\begin{corollary} \label{Poincare3}
The Poincar\'e series of $ H^*(\overline{\Fl}_3(\mathbb{R})) $ is $ \Pi_{\overline{\Fl}_3(\mathbb{R})}(t) = t^3 + t^2 + t + 1 $.
\end{corollary}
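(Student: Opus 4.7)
The plan is to read off the Poincaré series directly from the ring presentation obtained in Theorem \ref{main:n=3}. Since $H^*(\overline{\Fl}_3(\mathbb{R})) \cong \mathbb{F}_2[x]/(x^4)$ as a graded $\mathbb{F}_2$-algebra with $x$ a pullback of $\gamma_{1,1}$ and hence in degree $1$, a homogeneous $\mathbb{F}_2$-basis is given by $\{1, x, x^2, x^3\}$, concentrated in degrees $0, 1, 2, 3$ respectively.

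First I would note that the generator $\gamma_{1,1} \in H^1(B\Bgroup_3)$ has degree $1$, so its restriction to $H^1(B\Bgrouppos_3)$ and its subsequent image $x$ in $H^1(\overline{\Fl}_3(\mathbb{R}))$ still sits in degree $1$ (this is just that all maps in the argument preserve the cohomological grading). Then, computing the Poincaré series of $\mathbb{F}_2[x]/(x^4)$ with $|x|=1$ is the elementary sum
\[
\Pi_{\overline{\Fl}_3(\mathbb{R})}(t) \;=\; \sum_{i=0}^{3} t^{i} \;=\; 1 + t + t^2 + t^3,
\]
which is the claim (up to the trivial reordering to match the statement of the corollary).

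There is no real obstacle here: the only subtlety is that the presentation in Theorem \ref{main:n=3} is in terms of the ring $H^*(B\Bgrouppos_3)$ modulo an ideal, so one should briefly confirm that the isomorphism with $\mathbb{F}_2[x]/(x^4)$ is one of \emph{graded} rings. But this is already implicit in the way Theorem \ref{main:n=3} is stated, since the spectral-sequence argument producing it was carried out degreewise and the fourth page collapses to $H^*(B\Bgrouppos_3)/(\gamma_{1,1}\odot 1_1 + \alpha_1 \odot 1_2,\ \gamma_{1,1}\odot \alpha_1 + \alpha_2 \odot 1_1,\ \alpha_3)$, which inherits the grading.
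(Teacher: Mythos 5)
Your proof is correct and is exactly the argument the paper intends: the corollary is an immediate consequence of Theorem \ref{main:n=3}, reading off degrees from $\mathbb{F}_2[x]/(x^4)$ with $|x|=1$ since $x$ pulls back $\gamma_{1,1}\in H^1$. The paper gives no separate proof precisely because this is the whole content.
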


\begin{remark} It is worth noticing that although the cohomology ring of $ \overline{\Fl}_3(\mathbb{R}) $ and the $\mathbb{R}P^3$ are isomorphic, these spaces are not homeomorphic as they have non-isomorphic first homotopy groups. 
\end{remark}

\subsection{The cohomology of \texorpdfstring{$\overline{\Fl}_4(\mathbb{R})$}{text}} \label{coh4}

The second page of the Serre spectral sequence of the fibration 
\begin{equation} \label{ss4}
    SO(4) \hookrightarrow SO(4)_{h\Bgrouppos_4} \twoheadrightarrow B\Bgrouppos_4
\end{equation} is shown below.
\begin{figure}[H]
    \centering
    \begin{tikzpicture}
 \matrix (m) [matrix of math nodes, 
 nodes in empty cells, nodes={minimum width=5ex, 
 minimum height=2ex, outer sep=-5pt}, 
 column sep=1ex, row sep=1ex]{ \quad\strut
 & & & & & & & \\
 & 0 & 0 & 0 & 0 & \cdots & \\
 & \mathbb{F}_2 & \mathbb{F}_2 & \mathbb{F}^3_2 & \mathbb{F}^6_2 &\cdots & \\
 & \mathbb{F}_2 & \mathbb{F}_2 & \mathbb{F}^3_2 & \mathbb{F}^6_2 &\cdots & \\
 & \mathbb{F}_2 &\mathbb{F}_2 & \mathbb{F}^3_2 & \mathbb{F}^6_2 &\cdots & \\
 & \mathbb{F}_2 \oplus \mathbb{F}_2 &\mathbb{F}_2 \oplus \mathbb{F}_2 & \mathbb{F}^3_2 \oplus \mathbb{F}^3_2 & \mathbb{F}^6_2 \oplus \mathbb{F}^4_2 & \cdots & \\
 & \mathbb{F}_2 &\mathbb{F}_2 & \mathbb{F}^3_2 & \mathbb{F}^6_2 & \cdots & \\
 & \mathbb{F}_2 &\mathbb{F}_2 & \mathbb{F}^3_2 & \mathbb{F}^6_2 &\cdots & \\
 & \mathbb{F}_2 &\mathbb{F}_2 & \mathbb{F}^3_2 & \mathbb{F}^6_2 &\cdots & \\
 \quad \strut & & & & & & & \strut \\};
 \draw[-stealth] (m-2-2.south east) -- (m-3-4.north west);
 \draw[-stealth](m-2-3.south east) -- (m-3-5.north west);
 \draw[-stealth] (m-3-2.south east) -- (m-4-4.north west);
 \draw[-stealth] (m-3-3.south east) -- (m-4-5.north west);
 \draw[-stealth] (m-4-2.south east) -- (m-5-4.north west);
 \draw[-stealth] (m-4-3.south east) -- (m-5-5.north west);
 \draw[-stealth] (m-5-2.south east) -- (m-6-4.north west);
 \draw[-stealth] (m-5-3.south east) -- (m-6-5.north west);
 \draw[-stealth] (m-6-2.south east) -- (m-7-4.north west);
 \draw[-stealth] (m-6-3.south east) -- (m-7-5.north west);
 \draw[-stealth] (m-7-2.south east) -- (m-8-4.north west);
 \draw[-stealth] (m-7-3.south east) -- (m-8-5.north west);
 \draw[thick] (m-1-1.east) -- (m-10-1.east) ;
 \draw[-stealth] (m-8-2.south east) -- (m-9-4.north west);
 \draw[-stealth] (m-8-3.south east) -- (m-9-5.north west);
 \draw[thick] (m-1-1.east) -- (m-10-1.east) ;
\draw[thick] (m-10-1.north) -- (m-10-8.north) ;
\end{tikzpicture}
    \caption{$E_2$-page of the spectral sequence associated to \ref{ss4}.}
    \label{fig:spectral42}
\end{figure}

\begin{lemma}\label{d2bfl4}
$ d_2^{(0, 1)}(\beta_1) = \gamma_{1, 1} \odot \alpha_1 \odot 1_1 + \alpha_2 \odot 1_2 + \gamma_{1, 2} $.
\end{lemma}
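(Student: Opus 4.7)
The plan is to mimic the geometric strategy that worked for Lemma \ref{d2bfl3}: realize each generator of $H^2(B\Bgrouppos_4)$ as the restriction of a Thom class of an explicit submanifold of $B\Bgroup_4$, construct geometric $2$-cycles in $B\Bgrouppos_4$ whose transgressions in the Serre spectral sequence of \ref{ss4} we control via Lemma \ref{geometric machinery d2}, and read off the coefficients of $d_2^{(0,1)}(\beta_1)$ by counting transverse intersections. By Proposition \ref{cohomology Bpos}, the image of $\rho_4$ in degree $2$ is three-dimensional, spanned by $\gamma_{1,1}^2 \odot 1_2$ (equivalently, up to the Euler-class relation, by $\gamma_{1,1} \odot \alpha_1 \odot 1_1$), $\gamma_{1,2}$, and $\alpha_2 \odot 1_2$. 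Each of these restricts from a Thom class of a codimension-$2$ configuration submanifold $\altmathcal{M}_j \subseteq B\Bgroup_4$: $\altmathcal{M}_1$ consists of $4$-tuples with two points sharing their first two coordinates (or one point with vanishing first coordinate together with another coincidence, depending on the choice of presentative), $\altmathcal{M}_2$ of $4$-tuples that split into two pairs each sharing a first coordinate, and $\altmathcal{M}_3$ of $4$-tuples containing two points whose first two coordinates vanish.

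Next, I would build three geometric chains $\overline{\varGamma}_i \colon [0,1]^2 \to B\Bgroup_4$ meeting the hypotheses of Lemma \ref{geometric machinery d2}.3, whose diagonal loops represent involutions $y_i \in \Bgrouppos_4$ with $-1$-eigenspace of dimension exactly $2$. By part (2) of that lemma, the resulting cycles $\varGamma_i$ in $B\Bgrouppos_4$ then transgress to $[SO(2)] \in H_1(SO(4))$. Natural candidates, parallel to those used in the $n=3$ case, are: $\overline{\varGamma}_1$ built from two points moving on concentric circles in a coordinate plane together with two points fixed far from the origin (this is the direct analogue of $\overline{\varGamma}_1$ from \S\ref{coh3}); $\overline{\varGamma}_2$ obtained from the half-sphere / antipodal construction of the $n=3$ proof, decorated with an additional far-away point; and $\overline{\varGamma}_3$ tailored to $\gamma_{1,2}$, consisting of two disjoint pairs of points, each pair tracing antipodal arcs on its own circle so that the two coincidences required by $\altmathcal{M}_2$ occur simultaneously at one point of the square.

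I would then compute the matrix of pairings $\langle [\altmathcal{M}_j], [\overline{\varGamma}_i] \rangle_{\Bgroup_4}$ by counting transverse intersections, transferring the computation from $H^*(B\Bgrouppos_4)$ to $H^*(B\Bgroup_4)$ via surjectivity of $\rho_4$ in degree $2$ and naturality of the pairing, exactly as in Lemma \ref{d2bfl3}. Since $d_2^{(0,1)}$ is linearly dual to the transgression $d^2_{(2,0)}$, $\beta_1$ pairs to $1$ with $[SO(2)]$, and each $[\varGamma_i]$ transgresses to $[SO(2)]$, the relation $\langle d_2^{(0,1)}(\beta_1), C_i \rangle = 1$ determines the coefficients. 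Expanding $d_2^{(0,1)}(\beta_1) = \lambda_1 (\gamma_{1,1} \odot \alpha_1 \odot 1_1) + \lambda_2 \gamma_{1,2} + \lambda_3 (\alpha_2 \odot 1_2)$ and solving the resulting $3 \times 3$ linear system (ideally arranged to be upper triangular with $1$'s on the diagonal through a careful choice of the $\overline{\varGamma}_i$) gives $\lambda_1 = \lambda_2 = \lambda_3 = 1$.

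The main obstacle is the construction of $\overline{\varGamma}_3$: the class $\gamma_{1,2}$ is a genuinely new $n \geq 4$ phenomenon requiring two synchronized coincidences, and the supporting chain must (i) move two disjoint pairs of points coherently enough that $\altmathcal{M}_2$ is hit once transversely, (ii) satisfy the self-gluing identifications of Lemma \ref{geometric machinery d2}.3 with a conjugating element $x \in \Bgroup_4$, and (iii) yield a diagonal involution whose $-1$-eigenspace has dimension exactly $2$ (not $4$, which would force the transgression to vanish in $H_1(SO(4))$). Additionally, one has to verify that this $\overline{\varGamma}_3$ has controlled (ideally zero) intersection with $\altmathcal{M}_1$ and $\altmathcal{M}_3$ so that the contribution of $\gamma_{1,2}$ can be isolated from the others.
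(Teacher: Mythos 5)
Your plan is correct in spirit and would eventually succeed, but it takes a genuinely different (and more laborious) route than the paper. The paper does not compute a $3\times 3$ pairing matrix; instead it observes that the fibration $SO(3) \to SO(3)_{h\Bgrouppos_3} \to B\Bgrouppos_3$ sits inside the $n=4$ fibration as a sub-fibration, so by naturality of the Serre spectral sequence, $d_2^{(0,1)}(\beta_1)$ for $n=4$ is forced to agree with the $n=3$ answer from Lemma~\ref{d2bfl3} after applying the restriction $H^2(B\Bgrouppos_4)\to H^2(B\Bgrouppos_3)$. A short Hopf-ring calculation (using surjectivity of $\rho_3,\rho_4$ in degree $2$, so the question reduces to the coproduct component $\Delta_{(3,1)}$ on $H^*(B\Bgroup_4)$) shows the kernel of that restriction is spanned by $\gamma_{1,2}$ alone. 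This pins down two of the three coefficients for free and leaves only a single undetermined scalar $\lambda$ in front of $\gamma_{1,2}$. The paper then constructs exactly one geometric cycle --- your proposed $\overline{\varGamma}_3$, the two antipodal pairs tracing concentric circles centered at $(2,0)$ and $(5,0)$ --- and checks that it pairs to $1$ with $\gamma_{1,2}$ (one transverse intersection with $\altmathcal{M}_3 = \{p_{1,1}=p_{2,1},\, p_{3,1}=p_{4,1}\}$) while transgressing to $[SO(2)]$ via Lemma~\ref{geometric machinery d2}. That single equation gives $\lambda = 1$. Your all-geometric approach, needing three cycles and nine intersection counts, trades an easy algebraic comparison for extra differential topology; both routes give the same answer, but the comparison step is what makes the published argument short.

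On your flagged ``main obstacle'': the concerns about $\overline{\varGamma}_3$ are easier to dispatch than you anticipate. The two boundary loops represent the transpositions $(1,2)$ and $(3,4)$ in $\Bgroup_4$, which are conjugate via $x = (1,3)(2,4)$, so the hypotheses of Lemma~\ref{geometric machinery d2}.3 hold with $y = (1,2)(3,4)$; this $y$ has $-1$-eigenvalue of multiplicity exactly $2$ (not $4$), so part (2) of the lemma applies and the transgression really is $[SO(2)]$. The intersections with $\altmathcal{M}_1$ and $\altmathcal{M}_3$ are empty because the first coordinates of the four points stay in $[1,3]$ and $[4,6]$ respectively --- they never vanish, and points from different pairs can never collide --- so the contribution of $\gamma_{1,2}$ is cleanly isolated. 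In short, once you write down the explicit parametrization, (i)--(iii) are all immediate, which is why the paper can afford to skip your $\overline{\varGamma}_1$ and $\overline{\varGamma}_2$ entirely.
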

\begin{proof}
The fibration \ref{ss3} is naturally a sub-fibration of \ref{ss4}. By comparison of the associated Serre spectral sequences, in the latter $ d_2^{(0, 1)}(\beta_1) $ is determined by Lemma \ref{d2bfl3} modulo the kernel $ K $ of the restriction $ H^2(B\Bgrouppos_4) \to H^2(B\Bgrouppos_3) $.
Since $ \rho_3 $ and $ \rho_4 $ are surjective in degree $ 2 $, this kernel can be determined via the restriction map $ H^*(B\Bgroup_4) \to H^*(B\Bgroup_3) $, which is the composition of the component of the coproduct $ \Delta_{(3, 1)} \colon H^*(B\Bgroup_4) \to H^*(B\Bgroup_3) \otimes H^*(B\Bgroup_1) $ an the projection on $ H^*(B\Bgroup_3) $.
Computing it on the classes listed in Proposition \ref{cohomology Bpos} shows that $ K $ is generated by $ \gamma_{1, 2} $.
We deduce that
\[
d_2^{(0, 1)}(\beta_1) = \gamma_{1, 1} \odot \alpha_1 \odot 1_1 + \alpha_2 \odot 1_2 + \lambda\gamma_{1, 2}
\]
for some $ \lambda \in \mathbb{F}_2 $.

To determine $ \lambda $, we pair $ \beta_1 $ with the dual homological differential $ d^2_{(2, 0)} $ of a geometric cycle that pairs non-trivially with $ \gamma_{1, 2} $.
We consider the immersion
\begin{gather*}
\overline{\varGamma}_3 \colon [0, 1]^2 \to B\Bgroup_4, \ \overline{\varGamma}_3(t_1, t_2) = [(2+\cos(\pi t_1), \sin(\pi t_1), 0, \dots), (2 - \cos(\pi t_1), -\sin(\pi t_1), 0, \dots), \\
(5 + \cos(\pi t_2), \sin(\pi t_2), 0, \dots), (5 - \cos(\pi t_2), -\sin(\pi t_2), 0, \dots)], 
\end{gather*}
which is a geometric cycle in $ B\Bgroup_4 $. It is represented below.
\begin{figure}[H]
    \centering
    \begin{tikzpicture}[line cap=round, line join=round, x=1cm, y=1cm]
\begin{axis}[
x=1cm, y=1cm, 
axis lines=middle, 
xmin=-0.5, 
xmax=7, 
ymin=-2, 
ymax=2, 
xtick=\empty, 
ytick=\empty, ]
\draw [line width=1pt] plot[domain=0:6.283185307, variable=\t]({2+1*1*cos(\t r)+0*1*sin(\t r)}, {0*1*cos(\t r)+1*1*sin(\t r)});
\draw [line width=1pt] plot[domain=0:6.283185307, variable=\t]({5+1*1*cos(\t r)+0*1*sin(\t r)}, {0*1*cos(\t r)+1*1*sin(\t r)});

\node at (0.4, -0.2) {\tiny $(0, 0)$};
\begin{scriptsize}
\draw [fill=black] (2+3/5,4/5) circle (2pt);
\draw [fill=black] (2-3/5,-4/5) circle (2pt);
\draw [fill=black] (5+3/5,4/5) circle (2pt);
\draw [fill=black] (5-3/5,-4/5) circle (2pt);
\end{scriptsize}
\end{axis}
\end{tikzpicture}
    \caption{Graphical representation of $\overline{\varGamma}_3$.}
    \label{fig:fig10}
\end{figure}

By Lemma \ref{geometric machinery d2}.3, we obtain a geometric cycle $ \varGamma_3 $ in $ B\Bgrouppos_4 $ satisfying the hypotheses of Lemma \ref{geometric machinery d2}.2, with $ y = (1, 3)(2, 4) \in \mathcal{S}_4 \subseteq \Bgroup_4 $. Therefore $ d^2_{(2, 0)}[\varGamma_3] = [SO(2)] $. It pairs non-trivially with $ \beta_1 $, thus $ \langle [\varGamma_3], d_2^{(0, 1)}(\beta_1) \rangle = 1 $.

Moreover, $\gamma_{1, 2}$ can be represented as the Thom class associated to the submanifold of $ B\Bgroup_4 $
\[ \altmathcal{M}_3 = \{[(p_1, p_2, p_3, p_4)] : p_{1, 1}= p_{2, 1}, p_{3, 1}= p_{4, 1} \}. \]
Since $ \overline{\varGamma_3} $ intersects it transversally at a single point, $ \langle [\varGamma_3], \gamma_{1, 2} \rangle = 1 $ by the naturality of the pairing.
This determines $ \lambda $, which is equal to $ 1 $.
\end{proof}

We can deduce the following with the same argument used to prove Lemma \ref{d3bfl3}.
\begin{lemma} \label{d3bfl4}
$ \beta_1^2 $ is transgressive and $ d_3^{(0, 2)}(\beta_1^2) = \gamma_{1, 1} \odot \alpha_2 + \alpha_3 \odot 1_1 + \gamma_{2, 1} $.
\end{lemma}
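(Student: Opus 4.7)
The strategy is to imitate the proof of Lemma \ref{d3bfl3}. The class $\beta_1^2 \in H^2(SO(4))$ sits in $E_2^{0,2}$ of the spectral sequence for \ref{ss4}; by the Leibniz rule and mod-$2$ commutativity, $d_2(\beta_1^2) = 2\beta_1 \cdot d_2(\beta_1) = 0$, and for bidegree reasons it cannot be hit by any earlier differential, so $\beta_1^2$ survives to $E_3^{0,2}$ and is transgressive there. To identify the image, I would exploit the fact that $\beta_1$ is one-dimensional so $\beta_1^2 = \Sq^1(\beta_1)$. Kudo's transgression theorem (\cite[Corollary 6.9]{JM01}) then yields
\[
d_3^{(0,2)}(\beta_1^2) \;=\; \Sq^1\bigl(d_2^{(0,1)}(\beta_1)\bigr) \;=\; \Sq^1\bigl(\gamma_{1,1} \odot \alpha_1 \odot 1_1 + \alpha_2 \odot 1_2 + \gamma_{1,2}\bigr),
\]
using Lemma \ref{d2bfl4}.

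The computation of this Steenrod square can be performed in $H^*(B\Bgroup_4)$ rather than $H^*(B\Bgrouppos_4)$, because the restriction map $\rho_4$ is $\Sq^1$-equivariant. I would then apply the explicit formulas of \cite[Theorem 8.2]{LG21} term-by-term, using the Cartan formula for $\odot$ and $\cdot$ as dictated by the Hopf ring distributivity. For instance, $\Sq^1(\gamma_{1,2}) = \gamma_{2,1}$ (in the notation of \cite{LG21}), $\Sq^1(\alpha_2 \odot 1_2) = \alpha_3 \odot 1_1 + (\alpha_2 \cup \alpha_1)\odot 1_1$, and $\Sq^1(\gamma_{1,1} \odot \alpha_1 \odot 1_1)$ splits into a sum of Hopf-monomial terms involving $\gamma_{1,1}^2$, $\gamma_{1,1}\alpha_2$, $\alpha_1^2$, $\alpha_1$, and $1$. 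After carrying out these substitutions, the output should, up to factors of the Euler class $e = \gamma_{1,1} \odot 1_2 + \alpha_1 \odot 1_3$, collapse to $\gamma_{1,1} \odot \alpha_2 + \alpha_3 \odot 1_1 + \gamma_{2,1}$.

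The final step is to kill the $e$-multiples. By Lemma \ref{connectinghom}, $e$ lies in the image of the connecting homomorphism $\partial$, so $e = 0$ in $H^*(B\Bgrouppos_4)$; equivalently, every term of the form $e \cdot z$ in $H^*(B\Bgroup_4)$ restricts to $0$ under $\rho_4$. Collecting such terms as a single common factor $(\gamma_{1,1} \odot 1_2 + \alpha_1 \odot 1_3) \cdot (\text{something in degree }2)$ — exactly analogous to the identification used at the end of Lemma \ref{d3bfl3} — eliminates them from the expression in $H^*(B\Bgrouppos_4)$ and yields the claimed value.

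The main obstacle is the bookkeeping in step two: the Hopf ring distributivity law forces one to expand many cross-terms when computing $\Sq^1$ on a product like $\gamma_{1,1} \odot \alpha_1 \odot 1_1$, and recognizing which combinations assemble into multiples of $e$ is the only non-mechanical part. I expect a routine but lengthy direct verification, following the same pattern as Lemma \ref{d3bfl3}, to settle it.
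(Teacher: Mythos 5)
Your proposal matches the paper's intended proof exactly: the paper states this lemma can be deduced ``with the same argument used to prove Lemma \ref{d3bfl3},'' meaning the identity $\beta_1^2 = \Sq^1(\beta_1)$, Kudo's transgression theorem, the explicit $\Sq^1$-formulas from \cite[Theorem 8.2]{LG21} computed in $H^*(B\Bgroup_4)$, and reduction modulo multiples of the Euler class $e = \gamma_{1,1}\odot 1_2 + \alpha_1\odot 1_3$. Your illustrative claim that $\Sq^1(\gamma_{1,2})=\gamma_{2,1}$ on the nose is not quite what the Hopf ring formulas give (additional terms arise and assemble into $e$-multiples together with the cross-terms from the other summands), but you already flag this step as provisional and the overall strategy and final reduction are correct.
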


We exploit Lemma \ref{d3bfl4} to determine $ d_2^{(0, 3)}(\beta_3) $.
\begin{lemma} \label{b3 transgressive}
$ d_2^{(0, 3)}(\beta_3) = 0 $.
\end{lemma}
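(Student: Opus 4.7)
The plan is to exploit naturality of the Serre spectral sequence with respect to the $\Bgrouppos_4$-equivariant quotient map $\pi \colon SO(4) \twoheadrightarrow SO(4)/SO(3) = S^3$, where $\Bgrouppos_4$ acts on $SO(4)$ by left multiplication and on $S^3$ via its inclusion in $SO(4)$. Passing to the Borel construction yields a commutative diagram of fiber bundles over $B\Bgrouppos_4$:
\[
\begin{CD}
SO(4) @>\pi>> S^3 \\
@VVV @VVV \\
\overline{\Fl}_4(\mathbb{R}) @>>> E\Bgrouppos_4 \times_{\Bgrouppos_4} S^3 \\
@VVV @VVV \\
B\Bgrouppos_4 @= B\Bgrouppos_4.
\end{CD}
\]

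The first step is to identify $\beta_3 \in H^3(SO(4))$ with the pullback $\pi^*[S^3]$ of the fundamental class $[S^3] \in H^3(S^3)$. I would verify this by running the Serre spectral sequence of the auxiliary fibration $SO(3) \to SO(4) \to S^3$: it collapses at the $E_2$-page by degree reasons, yielding an additive decomposition $H^*(SO(4)) \cong H^*(SO(3)) \otimes H^*(S^3)$ in which the edge class coming from $E_\infty^{3,0}$ is $\pi^*[S^3]$. This is precisely the primitive degree-$3$ generator $\beta_3$ in Brown's Theorem \ref{chso3}.

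Once $\beta_3 = \pi^*[S^3]$ is in hand, the naturality of the Serre spectral sequence applied to the diagram above gives a morphism of $E_2$-pages from the $S^3$-bundle to the $SO(4)$-bundle, commuting with the differentials and sending $[S^3]$ to $\beta_3$. In the $S^3$-bundle spectral sequence, $d_2^{(0,3)}([S^3])$ vanishes trivially since its target $E_2^{2,2} = H^2(B\Bgrouppos_4) \otimes H^2(S^3) = 0$. By naturality, $d_2^{(0,3)}(\beta_3) = 0$ in the $SO(4)$-bundle spectral sequence, as desired.

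The only conceptual hurdle is the identification $\beta_3 = \pi^*[S^3]$, equivalently the primitivity of $\beta_3$ in the Hopf algebra $H^*(SO(4); \mathbb{F}_2)$; once this is settled, the rest is a purely formal degree count. An equivalent perspective is to observe that, being primitive, $\beta_3$ is transgressive in the universal Serre spectral sequence of $SO(4) \to ESO(4) \to BSO(4)$, and transgressivity is preserved under pullback along the classifying map $B\Bgrouppos_4 \to BSO(4)$; this again forces $d_r(\beta_3) = 0$ for all $r < 4$, and in particular $d_2(\beta_3) = 0$.
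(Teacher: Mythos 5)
Your proof is correct, and it takes a genuinely different route from the paper's. The paper first constrains $d_2^{(0,3)}(\beta_3)$ to the single candidate $\lambda'\,\gamma_{1,2}\otimes\beta_1^2$ by comparison with the $n=3$ spectral sequence (reusing the argument of Lemma~\ref{d2bfl4}), and then rules out $\lambda'\neq 0$ by a multiplicativity argument: invoking Lemma~\ref{d3bfl4} and the Hopf-ring presentation of $H^*(B\Bgroup_4)$, one checks that $\gamma_{1,2}\otimes\beta_1^2$ supports a nonzero $d_3$, hence is nonzero on $E_3$ and so cannot be a $d_2$-boundary. You bypass all of this with a single naturality check: the $\Bgrouppos_4$-equivariant projection $\pi\colon SO(4)\to SO(4)/SO(3)\cong S^3$ induces, after Borel construction, a morphism of Serre spectral sequences over $B\Bgrouppos_4$ that carries $[S^3]\in E_2^{0,3}$ of the $S^3$-bundle to $\pi^*[S^3]\in E_2^{0,3}$ of the $SO(4)$-bundle, and on the $S^3$ side the target of $d_2^{(0,3)}$ vanishes outright since $H^2(S^3)=0$; so $\pi^*[S^3]$ is a $d_2$-cycle. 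The only ingredient you must supply is $\pi^*[S^3]=\beta_3$, and your justification is sound: $\pi^*[S^3]$ is the unique nonzero class of $H^3(SO(4);\mathbb{F}_2)$ restricting to zero under $SO(3)\hookrightarrow SO(4)$ (equivalently, the unique nonzero primitive in that degree), and this is exactly the $\beta_3$ of Theorem~\ref{chso3} under the convention the paper uses implicitly, for instance when comparing to the $SO(3)$-spectral sequence in the proof of Lemma~\ref{d4bfl4}. Your approach is more conceptual, avoids the Hopf-ring bookkeeping, and simultaneously gives $d_3^{(0,3)}(\beta_3)=0$ (the paper obtains this separately by observing $E_3^{3,1}=0$), so $\beta_3$ is transgressive. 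Amusingly, the auxiliary fibration you use is precisely the one the paper constructs two lemmas later to show $d_4^{(0,3)}(\beta_3)\neq 0$, so your argument slots cleanly into the paper's existing framework; the closing remark about universal transgression out of $BSO(4)$ is also a valid equivalent viewpoint, though it leans on Borel's mod-$p$ transgression theorem whereas the $S^3$-bundle comparison establishes the same thing by bare hands.
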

\begin{proof}
By comparison with the Serre spectral sequence of the fibration \ref{ss3}, the same argument used in the first part of the proof of Lemma \ref{d2bfl4} shows that $ d_2^{(0, 3)}(\beta_3) = \lambda' \gamma_{1, 2} \otimes \beta_1^2 $ for some $ \lambda' \in \mathbb{F}_2 $.

By the multiplicativity of the spectral sequence, $ d_2^{(2, 2)}(\gamma_{1, 2} \otimes \beta_1^2) = 0 $. Hence, $ d_3^{(2, 0)}(\gamma_{1, 2} \otimes \beta_1^2) $ is defined. Again by the multiplicativity and Lemma \ref{d3bfl4}, this is equal to \[ d_3^{(0, 2)}(\beta_1^2) \cdot \gamma_{1, 2} = \gamma_{2, 1}\gamma_{1, 2} + \gamma_{1, 1}^2 \odot \gamma_{1, 1}\alpha_2 ,\] which is different from $ 0 $ in $ H^*(B\Bgroup_4)/(\gamma_{1, 1} \odot 1_2 + \alpha_1 \odot 1_3, \gamma_{1, 1} \odot \alpha_1 \odot 1_1 + \alpha_2 \odot 1_2 + \gamma_{1, 2}) $, considered as a subspace of the bottom row of the $ E_3 $-page. Thus, $ \gamma_{1, 2} \otimes \beta_1^2 $ must be non-zero in $ E_3^{(2, 2)} $, and this forces $ \lambda' = 0 $.
\end{proof}

We can now describe the spectral sequence entirely. The $ E_2 $-page is multiplicatively generated by the bottom row, on which $ d_2 $ is zero, and the classes $ \beta_1 $ and $ \beta_3 $. Therefore, Lemmas \ref{d2bfl4} and \ref{b3 transgressive} completely determine $d_2$. The first four columns of the third page look like the following.
\begin{figure}[H]
\centering
\begin{tikzpicture}
 \matrix (m) [matrix of math nodes, 
 nodes in empty cells, nodes={minimum width=5ex, 
 minimum height=2ex, outer sep=-5pt}, 
 column sep=1ex, row sep=1ex]{ \quad\strut
 & & & & & & & \\
 & 0 & 0 & 0 & 0 & \dots & \\
 & \mathbb{F}_2 & \mathbb{F}_2 & \mathbb{F}^2_2 & \mathbb{F}^5_2 &\cdots & \\
 & 0 & 0 & 0 & 0 &\cdots & \\
 & \mathbb{F}_2 & \mathbb{F}_2 & \mathbb{F}^3_2 & \mathbb{F}^6_2 & \cdots & \\
 & \mathbb{F}_2 &\mathbb{F}_2 & \mathbb{F}^2_2 & \mathbb{F}^5_2 & \cdots & \\
 & 0 & 0 & 0 & 0 &\cdots & \\
 & \mathbb{F}_2 &\mathbb{F}_2 & \mathbb{F}^2_2 & \mathbb{F}^5_2 &\cdots & \\
 \quad \strut & & & & & & & \strut \\};
 \draw[-stealth] (m-2-2.south east) -- (m-4-5.north west);
 \draw[-stealth] (m-3-2.south east) -- (m-5-5.north west);
 \draw[-stealth] (m-4-2.south east) -- (m-6-5.north west);
\draw[-stealth] (m-5-2.south east) -- (m-7-5.north west) ;
\draw[-stealth] (m-6-2.south east) -- (m-8-5.north west) ;
 \draw[thick] (m-1-1.east) -- (m-9-1.east) ;
\draw[thick] (m-9-1.north) -- (m-9-8.north) ;
6-6.north west);
\end{tikzpicture}
\caption{$E_3$-page of the spectral sequence associated to \ref{ss4}.}
\label{spectral43}
\end{figure}
The entries in the rows $ 1 $, $ 4 $, and $ 6 $ are zero in the first four columns by direct computations using Proposition \ref{cohomology Bpos}.

$ d_3 $ is determined on all $ E_3 $ multiplicatively from $ d_3^{(0, 2)}(\beta_1^2) $, that we have computed above, and $ d_3^{(0, 3)}(\beta_3) = 0 $ by degree reasons.
Consequently, on the fourth page of the spectral sequence we have 
\begin{figure}[H]
    \centering
    \begin{tikzpicture} \label{spectral44}
 \matrix (m) [matrix of math nodes, 
 nodes in empty cells, nodes={minimum width=5ex, 
 minimum height=2ex, outer sep=-5pt}, 
 column sep=1ex, row sep=1ex]{ \quad\strut
 & & & & & & & \\
 & \dots & \dots & \dots & \dots & \dots & \\
 & \mathbb{F}_2 & \dots & \dots & \dots & \dots & \\
 & 0 & 0 & \dots & \dots & \dots & \\
 & 0 & 0 & 0 & \dots &\dots & \\
 & \mathbb{F}_2 &\mathbb{F}_2 & \mathbb{F}^2_2 & \mathbb{F}^4_2 &\dots & \\
 \quad \strut & & & & & & & \strut \\};
 \draw[-stealth] (m-3-2.south east) -- (m-6-6.north west);
 \draw[thick] (m-1-1.east) -- (m-7-1.east) ;
\draw[thick] (m-7-1.north) -- (m-7-8.north) ;
\end{tikzpicture}
    \caption{$E_4$-page of the spectral sequence associated to \ref{ss4}.}
    \label{fig:spectral44}
\end{figure}

\begin{lemma} \label{d4bfl4}
$ d_4^{(0, 3)}(\beta_3) \not= 0 $.
\end{lemma}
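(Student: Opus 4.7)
The plan is to identify $d_4(\beta_3)$ with the pullback of the Stiefel--Whitney class $w_4$ by naturality with respect to the universal $SO(4)$-bundle, and then detect its non-vanishing by further restricting to an elementary abelian $2$-subgroup of $\Bgrouppos_4$.

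First, the inclusion $\Bgrouppos_4 \hookrightarrow SO(4)$ induces a classifying map $\phi \colon B\Bgrouppos_4 \to BSO(4)$ presenting fibration~\ref{ss4} as the pullback of $SO(4) \to ESO(4) \to BSO(4)$, giving a map of Serre spectral sequences. In the universal spectral sequence, Kudo's transgression theorem applied to $\beta_1^2 = \Sq^1 \beta_1$ (as in Lemma~\ref{d3bfl3}) yields $d_2(\beta_1) = w_2$ and $d_3(\beta_1^2) = \Sq^1 w_2 = w_3$; the class $\beta_3$ is transgressive and, since the only other generator of $H^4(BSO(4); \mathbb{F}_2) = \mathbb{F}_2\{w_2^2, w_4\}$ is $w_2^2$, which is already hit by $d_2(w_2 \otimes \beta_1) = w_2^2$, convergence to a point forces $d_4(\beta_3) = w_4$ on the universal $E_4^{(4, 0)}$. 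By naturality, this gives $d_4(\beta_3) = \phi^*(w_4)$ on our $E_4^{(4, 0)}$.

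Next, I would show $\phi^*(w_4) \neq 0$ in $E_4^{(4, 0)}$ by further restricting along the inclusion of the positive diagonal torus $T_4^+ := T_4 \cap SO(4) \cong (\mathbb{Z}/2)^3 \hookrightarrow \Bgrouppos_4$, which yields a map of Serre spectral sequences to the one for $SO(4) \to SO(4)_{hT_4^+} \to BT_4^+$. In $H^*(BT_4^+; \mathbb{F}_2) \cong \mathbb{F}_2[t_1, t_2, t_3, t_4]/(\sigma_1)$ (with $\sigma_i$ the $i$-th elementary symmetric polynomial and $t_i$ the Stiefel--Whitney classes of the four sign line bundles), the Stiefel--Whitney classes of the reflection representation restrict to $w_i \mapsto \sigma_i$, so the restricted differentials are $d_2(\beta_1) = \sigma_2$ and $d_3(\beta_1^2) = \sigma_3$, and the restricted $E_4^{(4, 0)}$ is the degree-$4$ part of $\mathbb{F}_2[t_1, \ldots, t_4]/(\sigma_1, \sigma_2, \sigma_3)$.

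Finally, since $\sigma_1, \sigma_2, \sigma_3, \sigma_4$ form a regular sequence in $\mathbb{F}_2[t_1, \ldots, t_4]$ (as witnessed by Theorem~\ref{cohoflagreal}: the quotient $\mathbb{F}_2[t]/(\sigma_1, \ldots, \sigma_4) = H^*(\Fl_4(\mathbb{R}))$ is the $24$-dimensional ring of the flag variety), we have $\sigma_4 \notin (\sigma_1, \sigma_2, \sigma_3)$, so $\sigma_4$ represents a non-zero class in the quotient above. Naturality then forces $\phi^*(w_4)$ to represent a non-zero class in our $E_4^{(4, 0)}$, proving $d_4^{(0, 3)}(\beta_3) \neq 0$. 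The main obstacle is the careful identification $d_4(\beta_3) = w_4$ on the $E_4$-page of the universal spectral sequence: one must be explicit about the image of $d_2$ on $E_2^{(2, 1)} = H^2(BSO(4)) \otimes \mathbb{F}_2\{\beta_1\}$ to see that $w_4$ is truly the unique surviving generator at $E_4^{(4, 0)}$, which $\beta_3$ must hit for $ESO(4)$ to be acyclic.
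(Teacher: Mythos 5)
Your argument is correct and takes a genuinely different route from the paper's. The paper's proof of Lemma~\ref{d4bfl4} is geometric: it exploits the fibration $SO(3) \hookrightarrow SO(4) \twoheadrightarrow S^3$ given by evaluation at a vector, compares the resulting map of Borel fibrations over $\Bgrouppos_3 \subseteq \Bgrouppos_4$, and concludes that the edge map $H^3(SO(4)_{h\Bgrouppos_4}) \to H^3(SO(4))$ vanishes, so $\beta_3$ cannot survive. You instead observe that the whole spectral sequence of~\ref{ss4} is the pullback under $\phi \colon B\Bgrouppos_4 \to BSO(4)$ of the universal spectral sequence for $SO(4) \to ESO(4) \to BSO(4)$, in which the simple system $\beta_1, \beta_1^2, \beta_3$ transgresses to $w_2, w_3, w_4$ (Borel), so by naturality $\beta_3$ is transgressive with $d_4(\beta_3) = [\phi^*(w_4)]$ in $E_4^{(4,0)}$; you then detect non-vanishing by restricting to $T_4^+ = T_4 \cap SO(4) \cong (\mathbb{Z}/2)^3$, where everything reduces to the regular-sequence fact $\sigma_4 \notin (\sigma_1, \sigma_2, \sigma_3)$ in $\mathbb{F}_2[t_1,\dots,t_4]$. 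Your computation that $E_4^{(4,0)}(\mathrm{univ}) = \mathbb{F}_2\{[w_4]\}$ is right (since $d_2(w_2 \otimes \beta_1) = w_2^2$ kills the only other generator and $E_2^{(1,2)} = 0$ because $H^1(BSO(4)) = 0$), and the regular-sequence argument via Theorem~\ref{cohoflagreal} is airtight. What your approach buys is more: it produces the explicit formula $d_4(\beta_3) = [\phi^*(w_4)]$ rather than mere non-vanishing, and it scales uniformly to higher $n$ (indeed, the authors note that the follow-up paper takes essentially this $BO$/$BSO$-comparison viewpoint). What the paper's approach buys is independence from the Borel transgression theorem and the classifying-map identification, keeping the argument within the self-contained geometric framework already established for the $d_2$-computations in Lemma~\ref{geometric machinery d2}. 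The only cosmetic point worth flagging: you should be slightly more careful to say that $d_4(\beta_3)$ equals the \emph{class of} $\phi^*(w_4)$ in the subquotient $E_4^{(4,0)}$, and that detection via the restriction to $T_4^+$ uses that the induced map $E_4^{(4,0)}(\Bgrouppos_4) \to E_4^{(4,0)}(T_4^+)$ sends this class to $[\sigma_4]$, which your naturality setup does guarantee.
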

\begin{proof}
We fix a unit vector $ v \in S^3 \subseteq \mathbb{R}^4 $ and we consider the map $ ev \colon SO(4) \to S^3 $ that evaluates a transformation in $ SO(4) $ at $ v $. The fiber of this map is the subgroup $ \Stab_{SO(4)}(v) \cong SO(3) $. After taking the homotopy quotient, $ ev $ induces a morphism $ (E\Bgroup_4 \times SO(4))/\Bgrouppos_4 \to S^3/\Bgrouppos_4 $. If we choose $ v = (1, 0, 0, 0) $, then $ \Stab_{\Bgrouppos_4}(v) = \Bgrouppos_4 \cap \Stab_{SO(4)}(v) \cong \Bgrouppos_3 $. Consequently, the following diagram commutes and the top and bottom sequences are fibrations:
\begin{center}
\begin{tikzcd}
SO(3) \arrow[hook]{r} \arrow{d} & SO(4) \arrow[two heads]{r} \arrow{d} & S^3 \arrow{d} \\
\frac{E\Bgroup_4 \times SO(3)}{\Bgrouppos_3} \arrow[hook]{r} & \frac{E\Bgroup_4 \times SO(4)}{\Bgrouppos_4} \arrow[two heads]{r} & \frac{S^3}{\Bgrouppos_4}
\end{tikzcd}
\end{center}

The map in cohomology $ H^*(S^3/\Bgrouppos_4) \to H^*(S^3) $ is zero in positive degrees. Moreover, the restriction of every cohomology class of $ SO(3)_{h\Bgrouppos_3} $ to $ H^*(SO(3)) $ is zero in positive degrees because in the spectral sequence analyzed in \S \ref{coh3} nothing survives in the leftmost column of the limit page.
Therefore, by comparison of the Serre spectral sequence of the top and bottom fibrations, we deduce that the restriction map $ H^3(SO(4)_{h\Bgrouppos_4}) \to H^3(SO(4)) $ is zero. 
In particular, in bidegree $ (0, 3) $ of the mod $ 2 $ cohomological Serre spectral sequence of the fibration \ref{ss4}, the $ E_\infty $ page must be zero. This forces $ d_4^{(0, 3)}(\beta_3) \not= 0 $.
\end{proof}

This determines the page $ E_5 $. Its first four columns are concentrated in the bottom row. Since no further differentials meet these columns, they survive to the limit page and yield $ H^*(\overline{\Fl}_4(\mathbb{R})) $ up to degree $ 3 $.
Since $ \overline{\Fl}_4(\mathbb{R}) $ is $ 6 $-dimensional, Poincar\'e duality determines all the remaining cohomology groups.
In conclusion, we have just proved the following result.
\begin{theorem} \label{poincare4}
The Poincaré series of $ H^*(\overline{\Fl}_4(\mathbb{R}); \mathbb{F}_2) $ is 
\[
\Pi_{\overline{\Fl}_4(\mathbb{R})}(t) = t^6 + t^5 + 2t^4 + 4t^3 + 2t^2 + t + 1.
\]
\end{theorem}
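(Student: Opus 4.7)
The plan is to push the partial spectral sequence computations of Lemmas \ref{d2bfl4}, \ref{d3bfl4}, \ref{b3 transgressive}, and \ref{d4bfl4} one step further to read off the cohomology in low degrees, then invoke Poincaré duality to recover the remaining degrees.

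First I would verify that those four lemmas together determine every differential entering or leaving columns $p \leq 3$ of the spectral sequence for $SO(4) \hookrightarrow SO(4)_{h\Bgrouppos_4} \twoheadrightarrow B\Bgrouppos_4$. By Theorem \ref{chso3}, $H^*(SO(4))$ is generated as an algebra by $\beta_1$ and $\beta_3$, so by the multiplicativity of the Serre spectral sequence every differential is determined once it is known on these two classes (together with the bottom row, where it vanishes for degree reasons). Lemmas \ref{d2bfl4} and \ref{b3 transgressive} give $d_2$; Lemma \ref{d3bfl4} gives $d_3(\beta_1^2)$, while $d_3(\beta_3) = 0$ by degree; Lemma \ref{d4bfl4} gives $d_4(\beta_3)$. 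For $r \geq 5$ no differential can intersect columns $p \leq 3$ by bidegree considerations, so $E_\infty^{p,q} = E_5^{p,q}$ in total degree at most $3$.

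Next I would compute $E_\infty^{p,q}$ for $p + q \leq 3$ by successively taking kernels and cokernels. Starting from the explicit bases in Proposition \ref{cohomology Bpos} and using the Hopf-ring identities of Theorem \ref{cohob} to expand products, the Leibniz rule reduces the computation to determining kernels and images of multiplication by the specific Hopf monomials appearing in $d_2(\beta_1)$, $d_3(\beta_1^2)$, and $d_4(\beta_3)$. Carrying this out (as indicated by the dimensions in Figures \ref{spectral43} and \ref{fig:spectral44}, and then passing to $E_5$ via $d_4$) shows that $E_\infty$ is concentrated in the bottom row in columns $0, 1, 2, 3$, with total dimensions $1, 1, 2, 4$. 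Hence $\dim_{\mathbb{F}_2} H^i(\overline{\Fl}_4(\mathbb{R})) = 1, 1, 2, 4$ for $i = 0, 1, 2, 3$.

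Finally, since $\overline{\Fl}_4(\mathbb{R}) = SO(4)/\Bgrouppos_4$ is a closed smooth manifold of dimension $\binom{4}{2} = 6$, mod-$2$ Poincaré duality (which holds without orientability) yields $\dim H^{6-i}(\overline{\Fl}_4(\mathbb{R})) = \dim H^i(\overline{\Fl}_4(\mathbb{R}))$. This supplies dimensions $4, 2, 1, 1$ for $i = 3, 4, 5, 6$, consistent with the previous computation in degree $3$. Assembling yields the claimed Poincaré series. The main obstacle is the Hopf-ring bookkeeping required to verify the bottom-row dimensions in Figures \ref{spectral43} and \ref{fig:spectral44}: the relations in Theorem \ref{cohob}, combined with the description of $H^*(B\Bgrouppos_4)$ through the Gysin sequence of Lemma \ref{connectinghom}, yield somewhat involved identities that must be carefully tracked to rule out unexpected cancellations.
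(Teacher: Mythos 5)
Your proposal is correct and follows essentially the same route as the paper: determine all relevant differentials from Lemmas \ref{d2bfl4}, \ref{b3 transgressive}, \ref{d3bfl4}, and \ref{d4bfl4} together with the multiplicative structure of the spectral sequence, observe that the first four columns of $E_5$ are concentrated in the bottom row with dimensions $1,1,2,4$, and then invoke mod-$2$ Poincar\'e duality for the closed $6$-manifold $\overline{\Fl}_4(\mathbb{R})$ to fill in degrees $4,5,6$. The only difference is cosmetic: you spell out the degree-by-degree bookkeeping that the paper conveys through the diagrams in Figures~\ref{spectral43} and~\ref{fig:spectral44}.
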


\subsection{The cohomology of \texorpdfstring{$\overline{\Fl}_5(\mathbb{R})$} {text}} \label{coh5}


\begin{lemma} \label{d2bfl5}
In the Serre spectral sequence associated to \begin{equation} \label{ss5}
    SO(5) \hookrightarrow \overline{\Fl}_5(\mathbb{R}) \twoheadrightarrow B\Bgrouppos_5
\end{equation} the differential $d_2$ has the following form:
\[
d_2^{(0, 1)}(\beta_1) = \gamma_{1, 2} \odot 1_1 + \gamma_{1, 1} \odot \alpha_1 \odot 1_2 + \alpha_2 \odot 1_3 \quad \text{and} \quad d_2^{(0, 3)}(\beta_3) = 0.
\]
\end{lemma}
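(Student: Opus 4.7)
The plan is to parallel the proof of Lemma \ref{d2bfl4}, exploiting comparison with the $n=4$ spectral sequence. The block-diagonal inclusion $\Bgrouppos_4 \hookrightarrow \Bgrouppos_5$ (acting trivially on the last coordinate), together with $SO(4) \hookrightarrow SO(5)$, induces a morphism of fibrations from \ref{ss4} to \ref{ss5}, and hence a morphism of Serre spectral sequences compatible with all differentials. By Lemma \ref{d2bfl4} and Lemma \ref{b3 transgressive}, the images of $d_2^{(0,1)}(\beta_1)$ and $d_2^{(0,3)}(\beta_3)$ in the $n=4$ spectral sequence are already known; the task therefore reduces to controlling the kernels of the relevant restriction maps.

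The key step is to show that $H^2(B\Bgrouppos_5) \to H^2(B\Bgrouppos_4)$ is an isomorphism. Since Proposition \ref{cohomology Bpos} implies that $\rho_n$ is surjective in degree $2$ for $n=4,5$, this reduces to analyzing the restriction $H^2(B\Bgroup_5) \to H^2(B\Bgroup_4)$ modulo Euler class relations; as in the proof of Lemma \ref{d2bfl4}, this map factors through $\Delta_{(4,1)}$ followed by projection onto the $H^*(B\Bgroup_4)$-factor. A direct computation using formula (4) of Theorem \ref{cohob} together with the $\odot$-multiplicativity of $\Delta$ should show that $\gamma_{1,1}^2 \odot 1_3$, $\gamma_{1,2} \odot 1_1$, and $\alpha_2 \odot 1_3$ restrict, respectively, to $\gamma_{1,1}^2 \odot 1_2$, $\gamma_{1,2}$, and $\alpha_2 \odot 1_2$, giving a bijection on the bases of $(\image(\rho_n))^2$ listed in Proposition \ref{cohomology Bpos}.

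Given the isomorphism, the lift of $\gamma_{1,1} \odot \alpha_1 \odot 1_1 + \alpha_2 \odot 1_2 + \gamma_{1,2}$ from Lemma \ref{d2bfl4} to $H^2(B\Bgrouppos_5)$ is unique, producing $\gamma_{1,2} \odot 1_1 + \gamma_{1,1} \odot \alpha_1 \odot 1_2 + \alpha_2 \odot 1_3$, as required. For the second part, I would observe that $H^2(SO(n))$ is spanned by $\beta_1^2$ for both $n=4,5$ and that the restriction $H^2(SO(5)) \to H^2(SO(4))$ preserves $\beta_1^2$; combined with the previous step, this makes the induced map on $E_2^{(2,2)}$-terms an isomorphism. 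Since $d_2^{(0,3)}(\beta_3) = 0$ for $n=4$ by Lemma \ref{b3 transgressive}, the corresponding differential here must vanish as well.

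The main obstacle is the careful verification that the kernel of the degree-$2$ restriction is actually trivial, the principal departure from Lemma \ref{d2bfl4} where a one-dimensional kernel (generated by $\gamma_{1,2}$) forced a geometric pairing argument to fix an undetermined coefficient. Here, however, the triviality should follow cleanly from the coproduct computation combined with a dimension count, so no geometric input is needed.
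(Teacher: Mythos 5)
Your proposal is correct and follows essentially the same route as the paper's proof, which simply asserts that $H^2(B\Bgrouppos_5) \to H^2(B\Bgrouppos_4)$ is an isomorphism and invokes comparison with the $n=4$ spectral sequence together with Lemmas \ref{d2bfl4} and \ref{b3 transgressive}. You supply the coproduct computation justifying that isomorphism, which the paper leaves to the reader, but the underlying idea — that the degree-$2$ restriction kernel is trivial, so (unlike the $n=4$ case) no geometric pairing argument is needed — is exactly the paper's.
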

\begin{proof}
The restriction map $ H^d (B\Bgrouppos_5) \to H^d (B\Bgrouppos_4) $ is an isomorphism for $ d = 2 $. Therefore the differential of the leftmost column of the second page is entirely determined by comparison with the spectral sequence for $ SO(4)_{h\Bgrouppos_4} $ analyzed in \S \ref{coh4} and the result follows from Lemmas \ref{d2bfl4} and \ref{b3 transgressive}.
\end{proof}

$ d_3^{(0, 3)}(\beta_3) = 0 $ because $ E_3^{(3, 1)} = 0 $.
Since $ \beta_1^2 = \Sq^1(\beta_1) $ and $ \beta_1^4 = \Sq^2(\beta_1^2) $, the following result is proved similarly as Lemma \ref{d3bfl3}.
\begin{lemma} \label{d35bfl5}
\[
d_3^{(0, 2)}(\beta_1^2) = \gamma_{1, 2} \odot \alpha_1 + \gamma_{1, 1} \odot \alpha_2 + \gamma_{2, 1} \odot 1_1 + \alpha_3 \quad \text{and} \quad  d_5^{(0, 4)}(\beta_1^4) = \alpha_5.
\]
\end{lemma}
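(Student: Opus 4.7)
The plan is to follow the same strategy as in Lemma \ref{d3bfl3}, applying Kudo's transgression theorem (cf.~\cite[Corollary 6.9]{JM01}) iteratively to re-express each differential as a Steenrod square of an earlier transgression, and then computing the Steenrod operations term-by-term using \cite[Theorem 8.2]{LG21}.

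For the first identity, I would first observe that $\beta_1^2 = \Sq^1(\beta_1)$ because $|\beta_1| = 1$, while multiplicativity forces $d_2(\beta_1^2) = 0$ over $\mathbb{F}_2$; hence $\beta_1^2$ is transgressive in the spectral sequence of \ref{ss5}. Kudo's theorem then gives
\[
d_3^{(0,2)}(\beta_1^2) \;=\; \Sq^1\bigl(d_2^{(0,1)}(\beta_1)\bigr) \;=\; \Sq^1\bigl(\gamma_{1,2} \odot 1_1 + \gamma_{1,1} \odot \alpha_1 \odot 1_2 + \alpha_2 \odot 1_3\bigr),
\]
using Lemma \ref{d2bfl5}. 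Since $\rho_5$ commutes with the Steenrod algebra action, this calculation can be performed upstairs in $H^*(B\Bgroup_5)$. I would expand $\Sq^1$ on each transfer product via the Cartan formula, using $\Sq^1(\alpha_1) = \alpha_1^2$, $\Sq^1(\gamma_{1,1}) = \gamma_{1,1}^2$, $\Sq^1(1_k) = 0$, and inserting the formulas for $\Sq^1(\alpha_2)$ and $\Sq^1(\gamma_{1,2})$ from \cite[Theorem 8.2]{LG21}. After absorbing terms that are multiples of the relation class $\gamma_{1,1} \odot 1_3 + \alpha_1 \odot 1_4$ (which vanishes in $H^*(B\Bgrouppos_5)$ by Lemma \ref{connectinghom}), and using the Hopf ring relations of Theorem \ref{cohob}, the expression should collapse to $\gamma_{1,2} \odot \alpha_1 + \gamma_{1,1} \odot \alpha_2 + \gamma_{2,1} \odot 1_1 + \alpha_3$.

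For the second identity, the observation $\beta_1^4 = \Sq^2(\beta_1^2)$ lets me iterate: mod-$2$ multiplicativity kills $d_2(\beta_1^4)$ and $d_3(\beta_1^4)$, and $d_4(\beta_1^4) = 0$ is forced by Kudo's theorem applied to $\beta_1^2$, so $\beta_1^4$ survives to $E_5^{(0,4)}$. A second application of Kudo then yields
\[
d_5^{(0,4)}(\beta_1^4) \;=\; \Sq^2\bigl(d_3^{(0,2)}(\beta_1^2)\bigr) \;=\; \Sq^2\bigl(\gamma_{1,2} \odot \alpha_1 + \gamma_{1,1} \odot \alpha_2 + \gamma_{2,1} \odot 1_1 + \alpha_3\bigr).
\]
I would again expand via Cartan together with the explicit Steenrod formulas of \cite[Theorem 8.2]{LG21}, then reduce modulo the kernel of $\rho_5$ and apply the Hopf ring relations to simplify to $\alpha_5$.

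The main obstacle is purely combinatorial: the Cartan expansion of $\Sq^2$ applied to a sum of transfer products with several factors produces a rather long list of Hopf monomials, most of which are non-trivial in $H^*(B\Bgroup_5)$ but which must cancel in pairs, assemble into multiples of $\gamma_{1,1} \odot 1_3 + \alpha_1 \odot 1_4$, or recombine into $\alpha_5$ after passing to $H^*(B\Bgrouppos_5)$. Organising this cancellation cleanly, rather than by brute force on the basis of Definition \ref{basis B}, is the technically delicate part of the argument.
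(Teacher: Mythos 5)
Your proposal is correct and matches the paper's method exactly: the paper's proof of Lemma~\ref{d35bfl5} is simply the remark that $\beta_1^2 = \Sq^1(\beta_1)$ and $\beta_1^4 = \Sq^2(\beta_1^2)$ together with the instruction to argue ``similarly as Lemma~\ref{d3bfl3},'' which unpacks to precisely your two-step Kudo/transgression argument followed by the Steenrod-square computation in $H^*(B\Bgroup_5)$ via \cite[Theorem~8.2]{LG21} and reduction modulo $\ker\rho_5$. The only cosmetic redundancy is that once Kudo gives $\beta_1^4$ transgressive you do not also need multiplicativity to kill $d_2$ and $d_3$ on it; otherwise the reasoning is exactly what the authors intend.
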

Finally, since the restriction of $ d_4^{(0, 4)}(\beta_3)$ to $ B\Bgrouppos_4 $ is nonzero by Lemma \ref{d4bfl4}, so is $ d_4^{(0, 4)}(\beta_3)$. Moreover, it follows from a straightforward calculation that the multiplication by $ \gamma_{1, 1} \odot 1_3 $ induces an injective map $ E_4^{(4, 0)} \to E_4^{(5, 0)} $. Thus, $ d_4^{(1, 4)}(\beta_3 \otimes (\gamma_{1, 1} \odot 1_3)) \not= 0 $.

These calculations for the first few pages of the spectral sequence are summarized in the following spectral sequence diagrams. 

\begin{figure}[H]
    \centering
    \begin{tikzpicture} 
\matrix (m) [matrix of math nodes, 
 nodes in empty cells, nodes={minimum width=5ex, 
 minimum height=2ex, outer sep=-5pt}, 
 column sep=1ex, row sep=1ex]{ \quad\strut
 & & & & & & & & \\
 & \cdots & \cdots & \cdots & \cdots & \cdots & \cdots & \cdots & \\
 & \mathbb{F}_2 \oplus \mathbb{F}_2 & \mathbb{F}_2 \oplus \mathbb{F}_2 & \mathbb{F}_2^3 \oplus \mathbb{F}_2^3 & \mathbb{F}_2^6 \oplus \mathbb{F}_2^6 & \mathbb{F}_2^9 \oplus \mathbb{F}_2^9 & \mathbb{F}_2^{14} \oplus \mathbb{F}_2^{14} & \cdots & \\
 & \mathbb{F}_2 \oplus \mathbb{F}_2 & \mathbb{F}_2 \oplus \mathbb{F}_2 & \mathbb{F}_2^3 \oplus \mathbb{F}_2^3 & \mathbb{F}_2^6 \oplus \mathbb{F}_2^6 & \mathbb{F}_2^9 \oplus \mathbb{F}_2^9 & \mathbb{F}_2^{14} \oplus \mathbb{F}_2^{14} & \cdots & \\
 & \mathbb{F}_2 \oplus \mathbb{F}_2 & \mathbb{F}_2 \oplus \mathbb{F}_2 & \mathbb{F}_2^3 \oplus \mathbb{F}_2^3 & \mathbb{F}_2^6 \oplus \mathbb{F}_2^6 & \mathbb{F}_2^9 \oplus \mathbb{F}_2^9 & \mathbb{F}_2^{14} \oplus \mathbb{F}_2^{14} & \cdots & \\
 & \mathbb{F}_2 & \mathbb{F}_2 & \mathbb{F}_2^3 & \mathbb{F}_2^6 & \mathbb{F}_2^9 & \mathbb{F}_2^{14} & \cdots & \\
 & \mathbb{F}_2 & \mathbb{F}_2 & \mathbb{F}_2^3 & \mathbb{F}_2^6 & \mathbb{F}_2^9 & \mathbb{F}_2^{14} & \cdots & \\
 & \mathbb{F}_2 & \mathbb{F}_2 & \mathbb{F}_2^3 & \mathbb{F}_2^6 & \mathbb{F}_2^9 & \mathbb{F}_2^{14} & \cdots & \\
 \quad \strut & & & & & & & & \strut \\};
 
 \draw[-stealth] (m-7-2.south east) -- (m-8-4.north west);
 \draw[-stealth] (m-7-3.south east) -- (m-8-5.north west);
 \draw[-stealth] (m-7-4.south east) -- (m-8-6.north west);
 \draw[-stealth] (m-7-5.south east) -- (m-8-7.north west);
 \draw[-stealth] (m-6-2.south east) -- (m-7-4.north west);
 \draw[-stealth] (m-6-3.south east) -- (m-7-5.north west);
 \draw[-stealth] (m-6-4.south east) -- (m-7-6.north west);
 \draw[-stealth] (m-6-5.south east) -- (m-7-7.north west);
 \draw[-stealth] (m-5-2.south east) -- (m-6-4.north west);
 \draw[-stealth] (m-5-3.south east) -- (m-6-5.north west);
 \draw[-stealth] (m-5-4.south east) -- (m-6-6.north west);
 \draw[-stealth] (m-5-5.south east) -- (m-6-7.north west);

 \draw[-stealth] (m-4-2.south east) -- (m-5-4.north west);
 \draw[-stealth] (m-4-3.south east) -- (m-5-5.north west);
 \draw[-stealth] (m-4-4.south east) -- (m-5-6.north west);
 \draw[-stealth] (m-4-5.south east) -- (m-5-7.north west);

 \draw[-stealth] (m-3-2.south east) -- (m-4-4.north west);
 \draw[-stealth] (m-3-3.south east) -- (m-4-5.north west);
 \draw[-stealth] (m-3-4.south east) -- (m-4-6.north west);
 \draw[-stealth] (m-3-5.south east) -- (m-4-7.north west);
 \draw[thick] (m-1-1.east) -- (m-9-1.east) ;
 \draw[thick] (m-9-1.north) -- (m-9-9.north) ;
\end{tikzpicture}
    \caption{$E_2$-page of the spectral sequence associated to \ref{ss5}.}
    \label{fig:spectral52}
\end{figure}

\begin{figure}[H]
    \centering
    \begin{tikzpicture}
\matrix (m) [matrix of math nodes, 
 nodes in empty cells, nodes={minimum width=5ex, 
 minimum height=2ex, outer sep=-5pt}, 
 column sep=1ex, row sep=1ex]{ \quad\strut
 & & & & & & & & \\
 & \cdots & \cdots & \cdots & \cdots & \cdots & \cdots & \cdots & \\
 & \mathbb{F}_2 & \mathbb{F}_2 & \mathbb{F}_2^2 & \mathbb{F}_2^5 & \mathbb{F}_2^6 & \mathbb{F}_2^8 & \dots & \\
 & \mathbb{F}_2 & \mathbb{F}_2 & \mathbb{F}_2^2 & \mathbb{F}_2^5 & \mathbb{F}_2^6 & \mathbb{F}_2^8 & \dots & \\
 & \mathbb{F}_2 & \mathbb{F}_2 & \mathbb{F}_2^2 & \mathbb{F}_2^5 & \mathbb{F}_2^6 & \mathbb{F}_2^8 & \dots & \\
 & \mathbb{F}_2 & \mathbb{F}_2 & \mathbb{F}_2^2 & \mathbb{F}_2^5 & \mathbb{F}_2^6 & \mathbb{F}_2^8 & \dots & \\
 & 0 & 0 & 0 & 0 & 0 & 0 & \dots & \\
 & \mathbb{F}_2 & \mathbb{F}_2 & \mathbb{F}_2^2 & \mathbb{F}_2^5 & \mathbb{F}_2^6 & \mathbb{F}_2^8 & \dots & \\
 \quad \strut & & & & & & & & \strut \\};
 \draw[-stealth] (m-6-2.south east) -- (m-8-5.north west);
 \draw[-stealth] (m-6-3.south east) -- (m-8-6.north west);
 \draw[-stealth] (m-6-4.south east) -- (m-8-7.north west);
 \draw[-stealth] (m-5-2.south east) -- (m-7-5.north west);
 \draw[-stealth] (m-5-3.south east) -- (m-7-6.north west);
 \draw[-stealth] (m-5-4.south east) -- (m-7-7.north west);
 \draw[-stealth] (m-4-2.south east) -- (m-6-5.north west);
 \draw[-stealth] (m-4-3.south east) -- (m-6-6.north west);
 \draw[-stealth] (m-4-4.south east) -- (m-6-7.north west);

 \draw[-stealth] (m-3-2.south east) -- (m-5-5.north west);
 \draw[-stealth] (m-3-3.south east) -- (m-5-6.north west);
 \draw[-stealth] (m-3-4.south east) -- (m-5-7.north west);
 \draw[thick] (m-1-1.east) -- (m-9-1.east) ;
 \draw[thick] (m-9-1.north) -- (m-9-9.north) ;
\end{tikzpicture}
    \caption{$E_3$-page of the spectral sequence associated to \ref{ss5}.}
    \label{fig:spectral53}
\end{figure}

\begin{figure}[H]
    \centering
    \begin{tikzpicture}
\matrix (m) [matrix of math nodes, 
 nodes in empty cells, nodes={minimum width=5ex, 
 minimum height=2ex, outer sep=-5pt}, 
 column sep=1ex, row sep=1ex]{ \quad\strut
 & & & & & & & & \\
 & \cdots & \cdots & \cdots & \cdots & \cdots & \cdots & \cdots & \\
 & 0 & 0 & 0 & 0 & 0 & 0 & \dots & \\
 & \mathbb{F}_2 & \mathbb{F}_2 & \mathbb{F}_2^2 & \mathbb{F}_2^4 & \mathbb{F}_2^5 & \mathbb{F}_2^6 & \dots & \\
 & \mathbb{F}_2 & \mathbb{F}_2 & \mathbb{F}_2^2 & \mathbb{F}_2^4 & \mathbb{F}_2^5 & \mathbb{F}_2^6 & \dots & \\
 & 0 & 0 & 0 & 0 & 0 & 0 & \dots & \\
 & 0 & 0 & 0 & 0 & 0 & 0 & \dots & \\
 & \mathbb{F}_2 & \mathbb{F}_2 & \mathbb{F}_2^2 & \mathbb{F}_2^5 & \mathbb{F}_2^6 & \mathbb{F}_2^8 & \dots & \\
 \quad \strut & & & & & & & & \strut \\};
 \draw[-stealth] (m-5-2.south east) -- (m-8-6.north west);
 \draw[-stealth] (m-5-3.south east) -- (m-8-7.north west);
 \draw[-stealth] (m-4-2.south east) -- (m-7-6.north west);
 \draw[-stealth] (m-4-3.south east) -- (m-7-7.north west);
 \draw[thick] (m-1-1.east) -- (m-9-1.east) ;
 \draw[thick] (m-9-1.north) -- (m-9-9.north) ;
\end{tikzpicture}
    \caption{$E_4$-page of the spectral sequence associated to \ref{ss5}.}
    \label{fig:spectral54}
\end{figure}

\begin{figure}[H]
    \centering
    \begin{tikzpicture}
\matrix (m) [matrix of math nodes, 
 nodes in empty cells, nodes={minimum width=5ex, 
 minimum height=2ex, outer sep=-5pt}, 
 column sep=1ex, row sep=1ex]{ \quad\strut
 & & & & & & & & \\
 & \cdots & \cdots & \cdots & \cdots & \cdots & \cdots & \cdots & \\
 & \mathbb{F}_2 & \mathbb{F}_2 & \mathbb{F}_2^2 & \mathbb{F}_2^4 & \mathbb{F}_2^4 & \mathbb{F}_2^5 & \dots & \\
 & 0 & 0 & 0 & 0 & 0 & 0 & \dots & \\
 & 0 & 0 & 0 & 0 & 0 & 0 & \dots & \\
 & 0 & 0 & 0 & 0 & 0 & 0 & \dots & \\
 & \mathbb{F}_2 & \mathbb{F}_2 & \mathbb{F}_2^2 & \mathbb{F}_2^4 & \mathbb{F}_2^4 & \mathbb{F}_2^5 & \dots & \\
 \quad \strut & & & & & & & & \strut \\};
 \draw[-stealth] (m-3-2.south east) -- (m-7-7.north west);
 \draw[thick] (m-1-1.east) -- (m-8-1.east) ;
 \draw[thick] (m-8-1.north) -- (m-8-9.north) ;
\end{tikzpicture}
    \caption{$E_5$-page of the spectral sequence associated to \ref{ss5}.}
    \label{fig:espectral55}
\end{figure}

\begin{figure}[H]
    \centering
    \begin{tikzpicture}
\matrix (m) [matrix of math nodes, 
 nodes in empty cells, nodes={minimum width=5ex, 
 minimum height=2ex, outer sep=-5pt}, 
 column sep=1ex, row sep=1ex]{ \quad\strut
 & & & & & & & & \\
 & \dots & \dots & \dots & \dots & \dots & \dots & \\
 & 0 & 0 & 0 & 0 & 0 & 0 & \dots & \\
 & 0 & 0 & 0 & 0 & 0 & 0 & \dots & \\
 & \mathbb{F}_2 & \mathbb{F}_2 & \mathbb{F}_2^2 & \mathbb{F}_2^4 & \mathbb{F}_2^4 & \mathbb{F}_2^4 & \dots & \\
 \quad \strut & & & & & & & & \strut \\};
 \draw[thick] (m-1-1.east) -- (m-6-1.east) ;
 \draw[thick] (m-6-1.north) -- (m-6-9.north) ;
\end{tikzpicture}
    \caption{$E_6$-page of the spectral sequence associated to \ref{ss5}.}
    \label{fig:espectral56}
\end{figure}

The first $ 6 $ columns of the $ 6^{th} $ page survive to the $ E_\infty $-page. Hence, they determine the cohomology groups of $ \overline{\Fl}_5(\mathbb{R}) $ up to degree $ 5 $. $ \overline{\Fl}_5(\mathbb{R}) $ being a $ 10 $-dimensional manifold, Poincar\'e duality determines the remaining ones.
In conclusion, we have proved the following result.
\begin{theorem} \label{poincare5}
The Poincar\'e series of the mod $ 2 $ cohomology ring of $ \overline{\Fl}_5(\mathbb{R}) $ is
\[
\Pi_{\overline{\Fl}_5(\mathbb{R})}(t) = t^{10} + t^9 + 2t^8 + 4t^7 + 4t^6 + 4t^5 + 4t^4 + 4t^3 + 2t^2 + 2 + 1.
\]
\end{theorem}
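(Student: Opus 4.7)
The plan is to finish the spectral-sequence analysis developed above and then invoke mod-$2$ Poincar\'e duality. The computations preceding the statement already give the $E_6$-page of the spectral sequence of \ref{ss5} in the first six columns, concentrated in the bottom row with $\mathbb{F}_2$-ranks $1, 1, 2, 4, 4, 4$ in total degrees $0$ through $5$. The first step is to argue that these entries survive to $E_\infty$. Any differential $d_r \colon E_r^{(p-r, r-1)} \to E_r^{(p, 0)}$ with target in column $p \le 5$ requires $r \le p \le 5$, so no differential on the $E_r$-page for $r \ge 6$ can modify the first six entries of the bottom row. Consequently $E_6^{(p, 0)} = E_\infty^{(p, 0)} = H^p(\overline{\Fl}_5(\mathbb{R}))$ for $0 \le p \le 5$.

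Second, since $\overline{\Fl}_5(\mathbb{R}) = O(5)/\Bgroup_5$ is a closed smooth manifold of dimension $\dim O(5) = 10$, mod-$2$ Poincar\'e duality yields $\dim H^p = \dim H^{10-p}$ for every $p$. Applying this symmetry to the ranks determined in degrees $0, \ldots, 5$ fixes all the remaining dimensions. Summing the resulting symmetric sequence produces the claimed Poincar\'e polynomial.

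The main obstacle lies in the bookkeeping of the pages $E_2$ through $E_6$ and, in particular, in verifying that the higher-row entries in the first six columns also collapse by the $E_6$-page, so that the contributions to $H^{\le 5}(\overline{\Fl}_5(\mathbb{R}))$ truly come from the bottom row alone. Unlike the cases $n = 3, 4$ where the fiber cohomology is small, $H^*(SO(5)) \cong \mathbb{F}_2[\beta_1, \beta_3]/(\beta_1^8, \beta_3^2)$ has total dimension $16$, so many products $\beta_1^i \beta_3^j$ must be handled; multiplicativity of the spectral sequence together with Kudo's transgression theorem, already exploited in Lemmas \ref{d35bfl5} and \ref{d4bfl4}, reduces the task to a routine verification on each multiplicative generator.
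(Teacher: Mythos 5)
Your proposal is correct and follows essentially the same route as the paper: read off $H^p(\overline{\Fl}_5(\mathbb{R}))$ for $p\le 5$ from the bottom row of the $E_6$-page (after noting that no $d_r$ with $r\ge 6$ can enter or leave those positions, and that the higher-row entries in total degree $\le 5$ have already been killed), and then apply mod-$2$ Poincar\'e duality for the closed $10$-manifold $\overline{\Fl}_5(\mathbb{R})$ to fill in degrees $6$ through $10$. The one phrasing to tighten is the chain ``$E_6^{(p,0)} = E_\infty^{(p,0)} = H^p$'': the first equality follows from the $r\ge 6$ degree count, but the second requires separately that every $E_\infty^{(p-q,q)}$ with $q>0$ and $p\le 5$ vanishes --- which you do acknowledge as ``the main obstacle'' and correctly attribute to the multiplicativity and transgression arguments (Lemmas \ref{d2bfl5}, \ref{d35bfl5} and the $d_4$ comparison with $n=4$) that force those rows to die by $E_6$ in the relevant columns, exactly as tabulated in Figures \ref{fig:spectral52}--\ref{fig:espectral56}.
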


\section{The cohomology of \texorpdfstring{$\overline{\Fl}_3 (\mathbb{C})$}{text}}\label{sec:complex}

In this section, we provide a complete description of the cohomology of $\overline{\Fl}_3(\mathbb{C}) = U(3)/N(T_3) $. We denote by $\mathbb{Z}/p$ the ring of integers modulo $p$ and by $\mathbb{Z}_p$ the ring of $p$-adic integers. To compute this cohomology we use the Serre spectral sequence associated to the fiber sequence 
\begin{equation} \label{ssorder3}
 \Fl_3(\mathbb{C}) \to \overline{\Fl}_3(\mathbb{C}) \to B\altmathcal{S}_3.
\end{equation} 
The $E_2$ page of this spectral sequence is given by $E_2^{p, q} = H^p (B\altmathcal{S}_3 ; H^q (\Fl_3(\mathbb{C}))$ and the spectral sequence converges to $H^{*} (\overline{\Fl}_3(\mathbb{C}))$. By Theorem \ref{cohoflagcom}, $H^* (\Fl_3 (\mathbb{C}); \mathbb{Z} )$ is torsion-free, is of total rank $6$, with even dimensional cohomology concentrated in degrees $0, 2, 4, 6$. To determine $E_2^{p, q}$, we describe $H^d (\Fl_3 (\mathbb{C}) ; \mathbb{Z})$ as integral representations $M_d$ of $\altmathcal{S}_3$ for $d=0, 2, 4, 6$ and compute the group cohomology $H^* (\altmathcal{S}_3; M_d)$. The action of $\altmathcal{S}_3$ on $H^d (\Fl_3 (\mathbb{C}); \mathbb{Z})$ is via
\begin{itemize}
 \item the trivial representation $M_0$ in degree $d=0$, 
 \item the sign representation $M_6$ in degree $d=6$, 
 \item the standard representation $M_2$ and $M_4$ in degrees $d=2$ and $d=4$ respectively.
\end{itemize}
Also, note that $\{ x_1^{i_1} \cdot x_2^{i_2} \cdot x_3^{i_3} | i_j \le 3-j \} $ forms a basis for $H^* (\Fl_3 (\mathbb{C}) ; \mathbb{Z})$ and hence we can write
\begin{align*}
 H^0 (\Fl_3 (\mathbb{C}) ; \mathbb{Z}) &= \mathbb{Z} =: M_0 \\
 H^2 (\Fl_3 (\mathbb{C}) ; \mathbb{Z}) &= \frac{ \mathbb{Z} \{ x_1 , x_2 , x_3 \} }{(x_1+x_2+x_3)} =: M_2 \\
 H^4 (\Fl_3 (\mathbb{C}) ; \mathbb{Z}) &= \frac{ \mathbb{Z} \{ x_1 x_2 , x_2 x_3 , x_3 x_1 \}}{(x_1 x_2 + x_2 x_3 +x_1 x_3)} =: M_4\\
 H^6 (\Fl_3 (\mathbb{C}) ; \mathbb{Z} ) &= \mathbb{Z} \{x_1^2 x_2 \} := M_6. 
\end{align*}
As representations of $\altmathcal{S}_3$, the modules $M_2$ and $ M_4$ are isomorphic as they are both quotients of the standard rank three permutation module by its rank one submodule of invariants. More precisely, the isomorphism $M_2 \cong M_4$ can be described via the short exact sequence \[ 0 \to \mathbb{Z} \to \mathbb{Z} [C_3] \to M_2 \to 0, \]
where $C_3 \cong A_3 \subset \altmathcal{S}_3$ is isomorphic to the alternating group.
Similarly, the sign representation $M_6$ can be described via the short exact sequence \[ 0 \to \mathbb{Z} \to \mathbb{Z} [C_2] \to M_6 \to 0, \] where $C_2 := \altmathcal{S}_3 / C_3$ is isomorphic to $\mathbb{Z}/2$. Further, recall that the cohomology of $\altmathcal{S}_3$ is given by \[ H^d (\altmathcal{S}_3 ; M_0) = \begin{cases} \mathbb{Z}/2 & d = 4k+2 \\ \mathbb{Z}/6 & d=4k \\ 0 & \text{otherwise} \end{cases} \]
From the long exact sequences in group cohomology associated to the above short exact sequences, we obtain \[ H^d (\altmathcal{S}_3; M_2)_3 = \begin{cases}
 \mathbb{Z}/3 & d=4k+3 \\ 0 & \text{otherwise}
\end{cases} \quad \quad H^d (\altmathcal{S}_3 ; M_6)_3 = \begin{cases}
 \mathbb{Z}/3 & d=4k+2 \\ 0 & \text{otherwise}
\end{cases} \]
Let us now consider the Serre spectral sequence with the $3$-adic coefficients $\mathbb{Z}_3$ associated to the fibration (\ref{ssorder3}). We know that $ E_2^{0, 0} \cong \mathbb{Z}_3$ and $E_2^{0, j} = H^j (\Fl_3 (\mathbb{C}) , \mathbb{Z}_3)^{\altmathcal{S}_3} = 0$ for all $j>0$. Moreover, for the rows in the spectral sequence, we have for all $i\ge 0$ \begin{align*}
 E_2^{4(i+1), 0} \cong \mathbb{Z}/3 , &\quad E_2^{4i+3, 2} \cong \mathbb{Z}/3 \\
 E_2^{4i+3, 4} \cong \mathbb{Z}/3, &\quad E_2^{4i+2, 6} \cong \mathbb{Z}/3.
\end{align*}
All the other terms on the $ E_2$ page are zero.

\begin{figure}
    \centering
    \begin{tikzpicture}
 \matrix (m) [matrix of math nodes, 
 nodes in empty cells, nodes={minimum width=5ex, 
 minimum height=5ex, outer sep=-5pt}, 
 column sep=.5ex, row sep=2ex]{
 & & & & \\
 6 & 0 & 0 & \mathbb{Z}/3 & 0 & 0 & 0 & \mathbb{Z}/3 & 0 & 0 & \cdots \\
 4 & 0 & 0 & 0 & \mathbb{Z}/3 & 0 & 0 & 0 & \mathbb{Z}/3 & 0 &\cdots \\
 2 & 0 & 0 & 0 & \mathbb{Z}/3 & 0 & 0 & 0 & \mathbb{Z}/3 & 0 &\cdots \\
 0 & \mathbb{Z}_3 & 0 & 0 & 0 & \mathbb{Z}/3 & 0 & 0 & 0 & \mathbb{Z}/3 &\cdots \\
 \quad\strut & 0 & 1 & 2 & 3 & 4 & 5 & 6 & 7 & 8 & \strut \\};
\draw[-stealth] (m-2-4.south east) -- (m-4-9.north west);
\draw[-stealth] (m-3-5.south east) -- (m-5-10.north west);

\draw[thick] (m-1-1.east) -- (m-6-1.east) ;
\draw[thick] (m-6-1.north) -- (m-6-11.north) ;
\end{tikzpicture}
    \caption{$E_2$-page of the spectral sequence associated to \ref{ssorder3} with $3$-adic coefficients.}
    \label{fig:ss33}
\end{figure}

We note that, among the differentials, $d_3 $ and $d_7$ must be zero as $E_3^{4i+5, 4} = E_3^{4i+6, 2} = E_3^{4i+6, 0} = 0$ and $E_7^{4i+9, 0} = 0$. Since  $\dim (\overline{\Fl}_3(\mathbb{C}))=6$, we must have $d_5 \neq 0$ and for all $i\ge 0$, and \begin{align*}
 d_5 : E_5^{4i+2, 6} \xrightarrow{\cong} E_5^{4(i+1)+3, 2} \\
 d_5 : E_5^{4i+3, 4} \xrightarrow{\cong} E_5^{4(i+1)+4, 0}
\end{align*}
Hence, the only remaining terms surviving to the $E_{\infty}$ page are $E_{\infty}^{3, 2} = E_{\infty}^{4, 0} \cong \mathbb{Z}/3$. Thus, from the above discussion, we obtain the following.
\begin{theorem}
 The $3$-adic cohomology of $ \overline{\Fl}_3 (\mathbb{C})$ is given by \[ H^d (\overline{\Fl}_3(\mathbb{C}); \mathbb{Z}_3) \cong \begin{cases}
 \mathbb{Z}_3 & d = 0 \\ \mathbb{Z}/3 & d=4, 5 \\ 0 & \text{otherwise}
 \end{cases} \]
\end{theorem}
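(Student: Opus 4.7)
The plan is to run the Serre spectral sequence of the fibration $\Fl_3(\mathbb{C}) \to \overline{\Fl}_3(\mathbb{C}) \to B\altmathcal{S}_3$ with $3$-adic coefficients. Because $H^*(\Fl_3(\mathbb{C}); \mathbb{Z})$ is torsion-free and concentrated in even degrees $0, 2, 4, 6$ by Theorem~\ref{cohoflagcom}, the $E_2$-page decomposes as $E_2^{p, q} = H^p(\altmathcal{S}_3; M_q)_{(3)}$ for $q \in \{0, 2, 4, 6\}$, where the $M_q$ are the monodromy $\altmathcal{S}_3$-modules already identified in the text: the trivial, standard, standard, and sign representations respectively.

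First I would pin down the group cohomology $H^*(\altmathcal{S}_3; M_q)$ at the prime $3$. For the trivial module this is classical: $\mathbb{Z}_3$ in degree $0$ and $\mathbb{Z}/3$ in every positive multiple of $4$. For the standard module $M_2 \cong M_4$ and the sign module $M_6$, I would exploit the short exact sequences $0 \to \mathbb{Z} \to \mathrm{Ind}_{C_2}^{\altmathcal{S}_3}\mathbb{Z} \to M_2 \to 0$ and $0 \to \mathbb{Z} \to \mathrm{Ind}_{C_3}^{\altmathcal{S}_3}\mathbb{Z} \to M_6 \to 0$, together with Shapiro's lemma, to rewrite the middle terms as $H^*(C_2; \mathbb{Z})$ and $H^*(C_3; \mathbb{Z})$. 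Since the former is $3$-torsion free while the latter contributes $\mathbb{Z}/3$ in every positive even degree, the associated long exact sequences shift the $\mathbb{Z}/3$ summands into the columns shown in the $E_2$-diagram: four staircases in rows $q = 0, 2, 4, 6$ occupying columns $p \equiv 0, 3, 3, 2 \pmod 4$ respectively.

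Finally I would analyse the differentials by a column-parity argument. With the four staircases offset by $3$ in the sense just described, both $d_3$ and $d_7$ necessarily have either zero source or zero target, hence vanish; the only remaining candidate is $d_5$. Since $\overline{\Fl}_3(\mathbb{C})$ is a compact $6$-manifold, nothing of total degree exceeding $6$ survives to $E_\infty$, which is just enough to force $d_5$ to be an isomorphism on every pair $E_5^{4i+2, 6} \to E_5^{4i+7, 2}$ and $E_5^{4i+3, 4} \to E_5^{4i+8, 0}$ for $i \geq 0$. This wipes out the entire spectral sequence apart from $E_\infty^{0,0} = \mathbb{Z}_3$, $E_\infty^{4, 0} = \mathbb{Z}/3$, and $E_\infty^{3, 2} = \mathbb{Z}/3$. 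Each total degree then carries at most one nonzero $E_\infty$-class, so no extension issue arises and the claimed formula for $H^*(\overline{\Fl}_3(\mathbb{C}); \mathbb{Z}_3)$ drops out. The main delicate point is to carefully verify the column positions of the $\mathbb{Z}/3$-classes arising from the Shapiro-plus-long-exact-sequence step, since the subsequent argument that $d_5$ must be surjective (hence bijective) rests entirely on the precise degree-by-degree alignment of the four rows.
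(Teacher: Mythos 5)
Your proposal is correct and follows the paper's own argument closely: same fibration, same identification of the monodromy modules $M_0, M_2 \cong M_4, M_6$, same short exact sequences fed through the long exact sequence in group cohomology (you simply name the Shapiro's lemma step that the paper leaves implicit), same staircase positions on the $E_2$-page, and the same parity argument forcing $d_3 = d_7 = 0$ while dimension considerations force $d_5$ to be an isomorphism on every overlapping pair. Your explicit remark that each total degree at $E_\infty$ carries at most one nonzero class, so no extension problems arise, is a small but worthwhile addition that the paper leaves unstated.
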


We now focus on the mod 2 case. We have an isomorphism $H^* (\altmathcal{S}_3; \mathbb{Z})_2 \cong H^* (C_2 ; \mathbb{Z})_2$ induced by the inclusion $\mathbb{Z} \hookrightarrow \mathbb{Z} [\altmathcal{S}_3 / C_2]$. So, we see that $H^d (\altmathcal{S}_3; M_2)_2 = 0$ for all $i\ge 0$. From the long exact sequence in group cohomology associated to the short exact sequence \[ 0 \to \mathbb{Z} \to \mathbb{Z} [\altmathcal{S}_3/C_3] \to M_6 \to 0, \] 
we deduce that $H^d (\altmathcal{S}_3 ; M_6)_2 \cong H^{d+1} (\altmathcal{S}_3 ; \mathbb{Z})_2$. So, \[ H^d (\altmathcal{S}_3 ; M_6)_2 \cong \begin{cases}
 \mathbb{Z}/2 & d = 2k+1 \\
 0 & \text{otherwise}
\end{cases} \]
 As before, we consider the same Serre spectral sequence, but now with the $2$-adic coefficients $\mathbb{Z}_2$. We have that $E_2^{0, 0} \cong \mathbb{Z}_2$ and for all $i \ge 0$ \begin{align*}
 E_2^{2i+1, 6} \cong \mathbb{Z}/2, \quad E_2^{2i+2, 0} \cong \mathbb{Z}/2 
 \end{align*}
and all other terms on the $E_2$ page are zero. Looking at the possible non-zero differentials, the only possibility is \[ d_7: E_7^{2i+1, 6} \to E_7^{2i+8, 0} \] 
which must be an isomorphism for $i\ge 0$, for dimension reasons.

\begin{figure}
    \centering
    \begin{tikzpicture}
 \matrix (m) [matrix of math nodes, 
 nodes in empty cells, nodes={minimum width=5ex, 
 minimum height=5ex, outer sep=-5pt}, 
 column sep=.5ex, row sep=2ex]{
 & & & & \\
 6 & 0 & \mathbb{Z}/2 & 0 & \mathbb{Z}/2 & 0 & \mathbb{Z}/2 & \cdots \\
 \vdots& & & & & & & \\
 0 & \mathbb{Z}_2 & 0 & \mathbb{Z}/2 & 0 & \mathbb{Z}/2 & 0 & \mathbb{Z}/2 & 0 & \mathbb{Z}/2 & \cdots \\
 \quad\strut & 0 & 1 & 2 & 3 & 4 & 5 & 6 & 7 & 8 & \strut \\};
\draw[-stealth] (m-2-3.south east) -- (m-4-10.north west);

\draw[thick] (m-1-1.east) -- (m-5-1.east) ;
\draw[thick] (m-5-1.north) -- (m-5-11.north) ;
\end{tikzpicture}
    \caption{$E_2$-page of the spectral sequence associated to \ref{ssorder3} with $2$-adic coefficients.}
    \label{fig:ss32}
\end{figure}

Hence the only surviving terms on the $E_{\infty}$ page are $E_{\infty}^{2, 0} = E_{\infty}^{4, 0} =E_{\infty}^{6, 0} \cong \mathbb{Z}/2$. Therefore, we obtain the following theorem.
\begin{theorem} \label{coh3com2}
 The $2$-adic cohomology of $\overline{\Fl}_3 (\mathbb{C})$ is given by \[ H^d (\overline{\Fl}_3(\mathbb{C}); \mathbb{Z}_2) \cong \begin{cases}
 \mathbb{Z}_2 & d = 0 \\ \mathbb{Z}/2 & d=2, 4, 6 \\ 0 & \text{otherwise}
 \end{cases} \]
\end{theorem}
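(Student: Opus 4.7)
The plan is to carry the Serre spectral sequence argument already executed for $3$-adic coefficients over to the prime $2$, reusing the group-cohomology computations collected just before the statement. Concretely, I would apply the spectral sequence of the fibration $\Fl_3(\mathbb{C}) \to \overline{\Fl}_3(\mathbb{C}) \to B\altmathcal{S}_3$ with $\mathbb{Z}_2$-coefficients and read off the answer from a sparse $E_2$-page.

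First I would assemble the $E_2$-page. Combining $H^*(\altmathcal{S}_3;\mathbb{Z})_2 \cong H^*(C_2;\mathbb{Z})_2$ (which is $\mathbb{Z}/2$ in positive even degrees and vanishes in odd positive degrees) with the two input computations $H^d(\altmathcal{S}_3;M_2)_2 = H^d(\altmathcal{S}_3;M_4)_2 = 0$ and $H^d(\altmathcal{S}_3;M_6)_2 \cong H^{d+1}(\altmathcal{S}_3;\mathbb{Z})_2$, the entire page collapses onto the two rows $q=0$ and $q=6$: it has $E_2^{0,0}\cong\mathbb{Z}_2$, $E_2^{2i+2,0}\cong \mathbb{Z}/2$ for $i\ge 0$, and $E_2^{2i+1,6}\cong \mathbb{Z}/2$ for $i\ge 0$, all other entries being zero. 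Second, because the rows $q=1,\dots,5$ are empty, every differential $d_r$ with $2\le r\le 6$ starting on the $q=6$ row has a zero target, and every differential landing on the $q=0$ row with the same range has a zero source. The first potentially non-trivial differential is therefore $d_7\colon E_7^{2i+1,6}\to E_7^{2i+8,0}$, and after it there is no further room for differentials.

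Third, I would pin $d_7$ down by a dimension argument: $\overline{\Fl}_3(\mathbb{C})$ is a closed manifold of real dimension $6$, so $H^d(\overline{\Fl}_3(\mathbb{C});\mathbb{Z}_2)=0$ for $d>6$; if any of the classes in bidegrees $(2i+1,6)$ or $(2i+8,0)$ survived to $E_\infty$ they would produce non-zero cohomology in arbitrarily large total degree. Since both source and target of each $d_7$ in question are copies of $\mathbb{Z}/2$, the only way to kill them is for $d_7$ to be an isomorphism on every such pair. What remains on $E_\infty$ is precisely $E_\infty^{0,0}\cong \mathbb{Z}_2$ and $E_\infty^{2,0}\cong E_\infty^{4,0}\cong E_\infty^{6,0}\cong \mathbb{Z}/2$, yielding the stated groups. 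The only real obstacle is the honest verification that the rows $M_2$ and $M_4$ contribute nothing $2$-locally; this is not subtle, but it is what makes the $2$-local picture so much cleaner than the $3$-local one, and it is the step I would check carefully before trusting the subsequent degree-counting argument.
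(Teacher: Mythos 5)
Your proposal follows exactly the same route as the paper: assemble the $E_2$-page from the computations $H^*(\altmathcal{S}_3;\mathbb{Z})_2 \cong H^*(C_2;\mathbb{Z})_2$, $H^d(\altmathcal{S}_3;M_2)_2 = H^d(\altmathcal{S}_3;M_4)_2 = 0$, and $H^d(\altmathcal{S}_3;M_6)_2 \cong H^{d+1}(\altmathcal{S}_3;\mathbb{Z})_2$, observe that only rows $q=0,6$ are populated so $d_7$ is the first candidate differential, and then force $d_7$ to be an isomorphism by the $6$-dimensionality of $\overline{\Fl}_3(\mathbb{C})$. The argument and the resulting $E_\infty$-page match the paper's; no gaps.
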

As a corollary of the above theorem, we can have the following.
\begin{corollary} \label{cohoring3}
 Let $\gamma$ denote the non-zero class in $H^1 (\overline{\Fl}_3(\mathbb{C}); \mathbb{F}_2) \cong \mathbb{F}_2$. Then \[ H^* (\overline{\Fl}_3(\mathbb{C}); \mathbb{F}_2) \cong \frac{\mathbb{F}_2 [\gamma]}{(\gamma^7)} . \]
\end{corollary}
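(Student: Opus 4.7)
The plan is to re-run the Serre spectral sequence of \ref{ssorder3} with $\mathbb{F}_2$ coefficients and extract the ring structure from the edge homomorphism. The additive structure is already determined by Theorem \ref{coh3com2} together with the universal coefficient theorem, giving $H^d(\overline{\Fl}_3(\mathbb{C}); \mathbb{F}_2) \cong \mathbb{F}_2$ for $0 \le d \le 6$ and zero otherwise; what is left is to show that the unique nonzero class $\gamma \in H^1$ satisfies $\gamma^d \ne 0$ for all $d \le 6$.

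First, since the Sylow $2$-subgroup $\altmathcal{S}_2 \cong \mathbb{Z}/2$ of $\altmathcal{S}_3$ is self-centralizing with trivial Weyl group, $H^*(B\altmathcal{S}_3; \mathbb{F}_2) \cong \mathbb{F}_2[x]$ with $|x|=1$. To compute the $E_2$-page, which has local coefficients $H^q(\Fl_3(\mathbb{C}); \mathbb{F}_2)$, I would show that rows $q = 2, 4$ vanish, while rows $q = 0, 6$ each reproduce $\mathbb{F}_2[x]$ (using that the sign representation becomes trivial mod $2$ on $H^6$). For the vanishing, $M_i \otimes \mathbb{F}_2$ ($i = 2, 4$) is the reduced mod-$2$ permutation representation, fitting into
\[
0 \to \mathbb{F}_2 \to \mathbb{F}_2[\altmathcal{S}_3/\altmathcal{S}_2] \to M_i \otimes \mathbb{F}_2 \to 0.
\]
By Shapiro's lemma the cohomology of the middle term is $H^*(\altmathcal{S}_2; \mathbb{F}_2)$, and under this identification the map induced by the inclusion of invariants $\mathbb{F}_2 \hookrightarrow \mathbb{F}_2[\altmathcal{S}_3/\altmathcal{S}_2]$ is the Sylow restriction, which is an isomorphism. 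The long exact sequence therefore forces $H^*(\altmathcal{S}_3; M_i \otimes \mathbb{F}_2) = 0$.

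With $E_2$ concentrated in rows $0$ and $6$, the only possible nonzero differential is $d_7 \colon E_7^{p, 6} \to E_7^{p+7, 0}$, and no differentials target the bottom row in columns $\le 6$. Comparing the total rank with the additive structure derived above, $d_7$ must be an isomorphism for every $p \ge 0$, so $E_\infty$ is concentrated in the bottom row in columns $0 \le d \le 6$. The edge map $p^* \colon H^*(B\altmathcal{S}_3; \mathbb{F}_2) \to H^*(\overline{\Fl}_3(\mathbb{C}); \mathbb{F}_2)$ is a ring homomorphism and factors as $\mathbb{F}_2[x] \twoheadrightarrow E_\infty^{*,0} \hookrightarrow H^*(\overline{\Fl}_3(\mathbb{C}); \mathbb{F}_2)$, inducing an isomorphism $\mathbb{F}_2[x]/(x^7) \xrightarrow{\cong} H^*(\overline{\Fl}_3(\mathbb{C}); \mathbb{F}_2)$. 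Setting $\gamma = p^*(x)$ yields the required presentation.

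The most delicate step will be establishing $H^*(\altmathcal{S}_3; M_i \otimes \mathbb{F}_2) = 0$; once this vanishing is in place, the rest of the argument is a dimension count plus multiplicativity of the edge map, with the differential $d_7$ pinned down abstractly by Poincar\'e-duality considerations rather than by an explicit geometric construction.
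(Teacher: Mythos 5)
Your proof is correct and fills in a genuine gap: the paper states Corollary \ref{cohoring3} as ``a corollary of the above theorem'' but gives no argument for the ring structure, which does not follow from the additive statement of Theorem \ref{coh3com2} alone (Poincar\'e duality, for instance, does not by itself rule out $\gamma^3 = 0$). Your choice to re-run the Serre spectral sequence directly with $\mathbb{F}_2$ coefficients and read off the multiplicative structure from the edge homomorphism is the natural way to justify it, and it parallels the paper's $\mathbb{Z}_2$-adic computation rather than duplicating it: over $\mathbb{Z}_2$ the $E_2$-page is already concentrated in rows $0$ and $6$ but the edge map only controls the even-degree ring $\mathbb{Z}_2[\beta]/(2\beta,\beta^4)$, and recovering the odd powers $\gamma^3,\gamma^5$ from that would require an extra Bockstein argument; working directly over $\mathbb{F}_2$ gives $\mathbb{F}_2[x]/(x^7)$ in one step.

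All the intermediate claims are sound. The Sylow $2$-subgroup of $\altmathcal{S}_3$ is self-normalizing, so restriction $H^*(\altmathcal{S}_3;\mathbb{F}_2)\to H^*(\altmathcal{S}_2;\mathbb{F}_2)$ is indeed an isomorphism, which is exactly what makes the long exact sequence collapse and kills rows $2$ and $4$. Your identification of the unit $\mathbb{F}_2\to\mathbb{F}_2[\altmathcal{S}_3/\altmathcal{S}_2]$ with the Sylow restriction under Shapiro is the key observation there. Once $E_2$ is concentrated in rows $0$ and $6$, the dimension count (either via Theorem \ref{coh3com2} as you do, or directly from $\dim\overline{\Fl}_3(\mathbb{C})=6$) forces $d_7$ to be an isomorphism in every column, and the edge map $\mathbb{F}_2[x]\to H^*(\overline{\Fl}_3(\mathbb{C});\mathbb{F}_2)$ is then surjective with kernel exactly $(x^7)$, giving the ring presentation. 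One small terminological quibble: the phrase ``Poincar\'e-duality considerations'' at the end is a slight overstatement --- what pins down $d_7$ is merely the top-dimension vanishing $H^{>6}=0$, which requires only that the space is a closed $6$-manifold, not the full duality pairing.
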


\section{Auerbach bases} \label{auerbach}

In \cite{WW17}, Weber--Wojciechowski produced the following estimate for the number of Auerbach bases (see the definition in \S \ref{intro}) of a finite-dimensional Banach space and thus settled the Pełczyński conjecture.
\begin{theorem} \label{weberestimate}
 1) In every $n$-dimensional Banach space there exist at least $\frac{n(n-1)}{2} + 1$
different Auerbach bases. 
\newline
2) For an open (in the sense of Hausdorff distance) dense set of Banach spaces with
smooth (continually twice differentiable) norm, there exists at least 
$2^{[n/2]} + 4$ (real case) or
$n!$ (complex case) different Auerbach bases.
\end{theorem}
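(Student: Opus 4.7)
The plan is to follow the two-stage reduction outlined in the introduction: translate the enumeration of Auerbach bases into a critical-point problem for $|\det|$, and then bound the resulting count by topological invariants of the unordered flag manifold.

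For part (1), Corollary \ref{detandab} identifies equivalence classes of Auerbach bases in $\mathbb{X}$ with bifurcation points of $|\det|$ on the quotient $S^n_{\mathbb{X}, \neq 0}/\Bgroup_n$. By classical Lusternik-Schnirelmann theory, this count is at least $\text{cat}(S^n_{\mathbb{X}, \neq 0}/\Bgroup_n)$, which by \eqref{catequality} equals $\text{cat}(\overline{\Fl}_n(\mathbb{R}))$. It remains to show $\text{cat}(\overline{\Fl}_n(\mathbb{R})) \ge \tfrac{n(n-1)}{2}+1$, which I would establish via the cup-length inequality by exhibiting a class $x \in H^1(\overline{\Fl}_n(\mathbb{R}); \mathbb{F}_2)$---concretely the image of $\gamma_{1, 1} \odot 1_{n-2}$ under the natural map $H^*(B\Bgroup_n) \to H^*(B\Bgrouppos_n) \to H^*(\overline{\Fl}_n(\mathbb{R}))$---whose $\tfrac{n(n-1)}{2}$-th cup power is non-zero. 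Because $\overline{\Fl}_n(\mathbb{R})$ is a closed manifold of dimension $\tfrac{n(n-1)}{2}$, such a class would saturate the dimension and yield the sharp bound $\text{cat} = \tfrac{n(n-1)}{2}+1$; the case $n = 3$ illustrates this scheme, with $H^*(\overline{\Fl}_3(\mathbb{R})) \cong \mathbb{F}_2[x]/(x^4)$ of cup-length $3$, cf.~Theorem \ref{main:n=3}.

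For part (2), on an open dense set of $C^2$-smooth norms the function $|\det|$ becomes a Morse function, and Morse inequalities bound the number of critical points below by the total Betti number of $\overline{\Fl}_n(\mathbb{R})$ (resp.~$\overline{\Fl}_n(\mathbb{C})$) with coefficients in any field, with strengthenings involving torsion when integer coefficients are used. In the real case, Proposition \ref{rationalcoho} gives the baseline $\text{rank}(H^*(\overline{\Fl}_n(\mathbb{R}); \mathbb{Q})) = \text{rank}(H^*(SO(n); \mathbb{Q})) = 2^{\lfloor n/2 \rfloor}$, and the extra ``$+4$'' I would extract from the $2$-primary torsion in the integral cohomology of $SO(n)$ contributing to the strong Morse inequalities, together with the four further generators that are visible on low pages of the spectral sequence of Proposition \ref{normalizer}. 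In the complex case, Proposition \ref{rationalcoho} shows that rational cohomology is concentrated in degree $0$, so the factor $n!$ must come entirely from torsion or from equivariant considerations; the natural strategy is to use the $n!$-sheeted covering $\Fl_n(\mathbb{C}) \to \overline{\Fl}_n(\mathbb{C})$ and the fact that $\text{rank}(H^*(\Fl_n(\mathbb{C}); \mathbb{Q})) = n!$ (Theorem \ref{cohoflagcom}), together with an equivariant Morse argument that lifts critical points of $|\det|$ on the base to the cover.

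The main obstacle is verifying these numerical bounds uniformly in $n$. For part (1), confirming that $x^{n(n-1)/2}$ survives all pages of the Serre spectral sequence of Proposition \ref{normalizer} is delicate and goes well beyond the computations of \S \ref{coh3}--\S \ref{coh5}; for general $n$, a structural argument is likely needed, invoking the geometric interpretation of $\gamma_{1, 1}$ as a Thom class and the explicit Hopf ring structure of $H^*(B\Bgroup_n)$ from Theorem \ref{cohob}. For part (2), producing the additional $+4$ and especially the full factor $n!$ demands a careful analysis of torsion or equivariant Morse theory, which is technically involved compared to the rational/mod-$2$ picture developed elsewhere in this paper; one would also have to justify that the lifts of critical points along the covering map really do contribute distinct equivalence classes in the base.
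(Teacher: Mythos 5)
This theorem is quoted from Weber--Wojciechowski \cite{WW17}; the paper only sketches their method in \S\ref{auerbach}, so the comparison is against that sketch.

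For part (1), your overall reduction (Auerbach bases $\to$ bifurcation points $\to$ L-S category of $\overline{\Fl}_n(\mathbb{R})$) matches the paper, but the last step does not. You propose to establish $\mathrm{cat}(\overline{\Fl}_n(\mathbb{R})) \ge \frac{n(n-1)}{2}+1$ by exhibiting a degree-one class in $H^1(\overline{\Fl}_n(\mathbb{R});\mathbb{F}_2)$ whose $\frac{n(n-1)}{2}$-th cup power is non-zero. Nothing in the paper establishes this for general $n$ (and you rightly flag it as going ``well beyond the computations of \S\ref{coh3}--\S\ref{coh5}''); indeed the paper never computes $H^*(\overline{\Fl}_n(\mathbb{R}))$ beyond $n=5$. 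The actual argument is much shorter: since $\Fl_n(\mathbb{R}) \to \overline{\Fl}_n(\mathbb{R})$ is a finite covering, $\mathrm{cat}(\overline{\Fl}_n(\mathbb{R})) \ge \mathrm{cat}(\Fl_n(\mathbb{R}))$, and the mod-$2$ cup-length of the \emph{ordered} flag manifold is the classical $\frac{n(n-1)}{2}$ by Theorem \ref{cohoflagreal}. Combined with the dimension upper bound in Theorem \ref{catbound}, this forces $\mathrm{cat}(\overline{\Fl}_n(\mathbb{R})) = \frac{n(n-1)}{2}+1$ without ever touching the ring structure of the unordered flag manifold. Your route, if it could be pushed through, would be stronger (it would prove a cup-length fact about $\overline{\Fl}_n(\mathbb{R})$), but it is not needed and is not supported by the paper.

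For part (2), the complex case is a genuine problem. You propose to use the $n!$-sheeted covering $\Fl_n(\mathbb{C}) \to \overline{\Fl}_n(\mathbb{C})$, the fact that $\mathrm{rank}\, H^*(\Fl_n(\mathbb{C});\mathbb{Q}) = n!$, and an equivariant Morse argument to lift critical points. This is precisely the argument the paper explicitly says ``is not applicable'': by Proposition \ref{rationalcoho} the rational cohomology of $\overline{\Fl}_n(\mathbb{C})$ is trivial, so Morse inequalities over $\mathbb{Q}$ on the unordered space give nothing, and lifting a critical point along an $n!$-fold covering produces $n!$ critical points upstairs that all map to the \emph{same} equivalence class downstairs, so the lower bound of $n!$ on the cover yields a lower bound of $1$ on the base. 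Your own caveat (``one would also have to justify that the lifts...contribute distinct equivalence classes'') is exactly where this fails. The paper's fix, which is one of its contributions, is to replace this with mod-$2$ cohomology of $\overline{\Fl}_n(\mathbb{C})$ itself (Corollary \ref{cohoring3} and the improvement in Corollary \ref{improvedestimate}). Your real-case sketch is fine modulo the unexplained ``$+4$'', but note the paper also leaves the ``$+4$'' unjustified in its summary, so that omission is not a deviation from the text.
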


Below we provide a brief account of their method of proof. We first need the following definition. 
 
\begin{definition} \label{bifurcation} Let $M$ be a topological manifold. For a continuous function $g: M \rightarrow \mathbb{R}$, a point $x \in M$ is a topologically \textit{regular point} of $g$ if there exists a neighborhood $U$ of $x$ and homeomorphism 
$h: S \times (a -\epsilon, a + \epsilon) \rightarrow U $ for some $S$ such that $g \circ h$ is the projection onto the second factor. If $x$ is not topologically regular, then it is called a \textit{bifurcation point}. 
\end{definition}
$S_{\mathbb{X}}^n$, the $n$-fold cartesian product of $S_{\mathbb{X}}$, may be viewed as matrices with rows unit vectors. So the determinant $\det: S_{\mathbb{X}}^n \rightarrow \mathbb{R}$ is well defined.

For a basis $W=\{v_1, \cdots, v_n\}$ of $\mathbb{X}$, for $1 \leq i \leq n$, let \[ |\mathrm{det}|^i_{W}:S_{\mathbb{X}} \to \mathbb{R}, \quad |\mathrm{det}|^i_{W}(x)= |\det (v_1, v_2, \cdots, v_{i-1}, x, v_{i+1}, \cdots, v_n)| \] denote the restriction map. In the complex case, $S_{\mathbb{X}}$ may be viewed as a subspace of $\mathbb{R}^{2n}$. Then, we have the following characterization of Auerbach bases. 

\begin{theorem} \label{int2auerbach}
Let $W= \{v_1, \cdots, v_n\}$ be a basis of a Banach space $\mathbb{X}$ with $v_i \in S_{\mathbb{X}} $ for $i=1, 2, \cdots, n$. Then, $W$ is an Auerbach basis of $\mathbb{X}$ if and only if $v_i$ is a bifurcation point of $|\mathrm{det}|_W^i$ for all $i$. 
 
\end{theorem}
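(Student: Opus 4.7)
The plan hinges on the identity
\[
|\det|_W^i(x) = |D| \cdot |v^i(x)|, \quad x \in S_\mathbb{X},
\]
where $D = \det(v_1,\ldots,v_n) \neq 0$ and $v^1,\ldots,v^n \in \mathbb{X}^*$ is the unique biorthogonal system satisfying $v^i(v_j) = \delta_{ij}$. This identity holds because $x \mapsto \det(v_1,\ldots,v_{i-1},x,v_{i+1},\ldots,v_n)$ is a linear functional vanishing on $\mathrm{span}(v_j : j \neq i)$ and sending $v_i$ to $D$, hence equal to $D \cdot v^i$. Since $|D| > 0$, bifurcation points of $|\det|_W^i$ coincide with those of $|v^i|$ restricted to $S_\mathbb{X}$, and the Auerbach condition amounts to $\|v^i\|_{\mathbb{X}^*} = 1$ for every $i$. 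The theorem thus reduces to proving, for each $i$, that $v_i$ is a bifurcation point of $|v^i|$ on $S_\mathbb{X}$ if and only if $\|v^i\|_{\mathbb{X}^*} = 1$.

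The $(\Leftarrow)$ direction is immediate: if $\|v^i\|_{\mathbb{X}^*} = 1$, then $v_i$ achieves the global maximum of $|v^i|$ on $S_\mathbb{X}$ because $|v^i(v_i)| = 1 = \|v^i\|_{\mathbb{X}^*}$. Any local trivialization $h\colon S \times (1-\epsilon, 1+\epsilon) \to U$ with $|v^i| \circ h = \pi_2$ would produce points in $U$ where $|v^i| > 1$, contradicting the maximum; hence $v_i$ is a bifurcation point.

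For the contrapositive of the converse, assume $\|v^i\|_{\mathbb{X}^*} > 1$ and construct a local trivialization. Choose $w \in \mathbb{X}$ with $\alpha := v^i(w) > \|w\| > 0$ (rescale and phase-rotate any element realizing $|v^i|/\|\cdot\| > 1$), let $L = S_\mathbb{X} \cap \{v^i = 1\}$, and set
\[
\Phi(x, s) = \frac{x + sw}{\|x + sw\|}, \quad (x, s) \in (L \cap U_0) \times (-\delta, \delta),
\]
with $U_0$ a small neighborhood of $v_i$ in $L$ and $\delta$ small. The triangle inequality $\bigl|\,\|x+sw\| - 1\bigr| \leq |s|\|w\|$ together with $\alpha > \|w\|$ gives $\mathrm{sgn}\bigl(v^i(\Phi(x,s)) - 1\bigr) = \mathrm{sgn}(s)$ and $\bigl|v^i(\Phi(x,s)) - 1\bigr| \geq c|s|$ for some $c > 0$. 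For injectivity, $\Phi(x_1, s_1) = \Phi(x_2, s_2)$ means $x_1 + s_1 w$ and $x_2 + s_2 w$ are positive scalar multiples; applying $v^i$ and using $v^i(x_j) = 1$ forces that scalar to be $1$, and so $(x_1, s_1) = (x_2, s_2)$. Invariance of domain then identifies $\Phi$ with a homeomorphism onto an open neighborhood of $v_i$ in $S_\mathbb{X}$, and reparametrizing the second factor by $c := v^i \circ \Phi$ yields the required trivialization.

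The main obstacle is the $(\Rightarrow)$ direction: because Banach norms are in general only Lipschitz and convex, the inverse function theorem is unavailable, and both the monotonicity and the injectivity of $\Phi$ must be extracted from triangle-inequality estimates alone. In the complex case, the same scheme works with $s \in \mathbb{C}$ and $L$ of real codimension two in $S_\mathbb{X}$; the analogous estimate for $|1 + s\alpha|$ continues to organize $|v^i| \circ \Phi$ as a topological product near $(v_i, 0)$.
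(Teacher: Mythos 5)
Your reduction $|\mathrm{det}|_W^i = |D|\,|v^i|$ is a clean simplification that the paper does not make (the paper argues directly with $|\mathrm{det}|$ via Gram--Schmidt and the Birkhoff--James inequality for one direction, and via convexity of the affine slices $(x + \mathrm{span}\{v_j : j \neq i\}) \cap \bar{B}_{\mathbb{X}}$ for the other), and your global-maximum argument for the Auerbach-implies-bifurcation direction is correct.

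The converse direction, however, has a genuine gap in the injectivity argument for $\Phi$. From $\Phi(x_1,s_1)=\Phi(x_2,s_2)$ you write $x_1 + s_1 w = \lambda(x_2 + s_2 w)$ with $\lambda > 0$; applying $v^i$ and using $v^i(x_j)=1$, $v^i(w)=\alpha$ yields only $1 + s_1\alpha = \lambda(1 + s_2\alpha)$, i.e.\ $\lambda = (1+s_1\alpha)/(1+s_2\alpha)$. This does \emph{not} force $\lambda = 1$; it equals $1$ precisely when $s_1 = s_2$, which is what you are trying to prove, so as written the step is circular. Tellingly, your injectivity argument never invokes the hypothesis $\alpha > \|w\|$, which is what actually makes the map injective. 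One way to close the gap: from $x_1 - \lambda x_2 = (\lambda s_2 - s_1)w$, the reverse triangle inequality gives $|1-\lambda| = \bigl|\,\|x_1\| - \lambda\|x_2\|\,\bigr| \le \|x_1-\lambda x_2\| = |\lambda s_2 - s_1|\,\|w\|$; substituting $1-\lambda = \alpha(s_2-s_1)/(1+s_2\alpha)$ and $\lambda s_2 - s_1 = (s_2-s_1)/(1+s_2\alpha)$ reduces this to $\alpha|s_1-s_2| \le \|w\|\,|s_1-s_2|$, forcing $s_1=s_2$ since $\alpha>\|w\|$, hence $\lambda=1$ and $x_1=x_2$. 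A smaller but also real omission: the reparametrization $c := v^i\circ\Phi$ requires $s\mapsto v^i(\Phi(x,s))$ to be \emph{strictly monotone} in $s$, but your estimates only control its sign relative to $1$ and its distance from $1$, not its monotonicity; a separate (similar) triangle-inequality estimate using $\alpha>\|w\|$ is needed to rule out oscillation.
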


\begin{proof}

Let $\{v_1, v_2, \cdots v_n\}$ be an Auerbach basis of a Banach space $\mathbb{X} $ over $\mathbbm{k}= \mathbb{R}$ or $ \mathbb{C}$. Then for each $v_i$, 

\begin{equation}\label{birkhoffjames}
| v_i + \lambda v|| \geq ||v_i||= 1 \ \text{for all} \ \lambda \in \mathbbm{k}, 
\end{equation}
for all $v \in \mathrm{span} \{ v_1, v_2, \cdots, v_{i-1}, v_{i+1}, \cdots, v_n\}$.

We will first prove that $v_n$ is a bifurcation of $|\mathrm{det}|_W^n$. 
For $x \in S_{\mathbb{X}}$, we can write $x= r v_n + \bar{v}$ for some $\bar{v} \in \mathrm{span} \{ v_1, v_2, \cdots, v_{n-1}\}$. Then, by Equation \ref{birkhoffjames}, $|r| \leq 1$; otherwise, plugging in $ \lambda= -\frac{1}{r}$ and $v= \bar{v}$ into the equation, one gets $|| v_n - v_n + \frac{1}{r} x||= |1/ r| < 1$, a contradiction.
\\
This further implies 
\[ || x -\text{proj}_n(x)|| \leq || v_n -\text{proj}_n(v_n)|| \] 
where $\text{proj}_n: \mathbb{X} \to \mathrm{span} \{ v_1, v_2, \cdots, v_{n-1}\}$ denote the standard projection. 

For $v \in \mathbb{X}$, let $\{u_1, u_2, \cdots, u_n\}$ be the basis obtained from $\{ v_1, v_2, \cdots, v_{n-1}, v \}$ by the Gram-Schmidt orthogonalization process, then we know that
\begin{equation*} 
|\det( v_1, v_2, \cdots, v_{n-1}, v)|= \prod_{i=1}^{n} ||u_i||= \prod_{i=1 }^{n-1} ||u_i|| \cdot || v -\text{proj}_n(v)||.
\end{equation*} 
Hence, from the above equations, $\forall x \in S_{\mathbb{X}}$, 
 \[ |\mathrm{det}|_W^n (x)
\leq |\mathrm{det}|_W^n (v_n). \]
 
Therefore, $v_n$ is a bifurcation point of $|\det|^n_W$. By a similar argument, $v_j$ is a bifurcation point of $|\det|^j_W$ for all $j$.
\newline
For the "if" part, suppose that 
 $\{ v_1, v_2, \cdots, v_{n-1}, v_{n}\}$ does not form an Auerbach basis. Without loss of generality, we may assume that $\mathrm{span} \{ v_1, v_2, \cdots, v_{n-1}\} \cap {B}_{\mathbb{X}} \neq \emptyset$, where ${B}_{\mathbb{X}}$ denote the open unit ball in $\mathbb{X}$. Then, we need to show that there is a neighborhood $N_{v_n} $ of $v_n$ in $S_{\mathbb{X}}$ and a topological space $S$ and a homeomorphism 
 \[ h: S \times (a - \epsilon, a+ \epsilon) \rightarrow N_{v_n}, \]
 where $a= |\det (v_1, v_2, \cdots, v_n)| \neq 0$ such that $|\det|^n_W \circ h$ is the projection on to the second factor. 
\newline
By continuity, there exists a neighborhood $N_{v_n} \subset {S}_{\mathbb{X}} $ of $v_n$ such that 
\[ x+ \text{span} \{ v_1, v_2, \cdots, v_{n-1}\} \cap \ {B}_{\mathbb{X}} \neq \emptyset \quad \text{for all} \ x \in N_{v_n} .\]
By convexity of $\bar{B}_{\mathbb{X}}= {B}_{\mathbb{X}} \cup {S}_{\mathbb{X}}$, for all $ x \in N_{v_n}$, we have 

\[
x+ \text{span} \{ v_1, v_2, \cdots, v_{n-1}\} \cap N_{v_n} \cong 
\begin{cases}
 D^{n-1} & \text{in the real case} \\
 D^{2n-3} & \text{in the complex case}
\end{cases}
\]
Let $\epsilon >0$ be such that 
\[ (|\mathrm{det}|_W^j)^{-1} (r) \cap N_{v_n} \neq \emptyset \quad \text{for all} \; r \in (a- \epsilon, a+ \epsilon) \]
 
Then, in the real case, $ \forall r \in (a- \epsilon, a+ \epsilon)$, there is a $ x \in N_{v_n} $ such that

\begin{align*} (|\mathrm{det}|_W^n)^{-1} (r) \cap N_{v_n}= (x+ \text{span} \{ v_1, v_2, \cdots, v_{n-1}\}) \cap N_{v_n} \cong D^{n-2} 
\end{align*}
Therefore, there is a homeomorphism 
\[ h: D^{n-2} \times (a- \epsilon, a+ \epsilon) \rightarrow N_{v_n} \] 
such that $|\mathrm{det}|_W^n \circ h $ is projection on to the last factor.
\newline
In the complex case, $ \forall r \in (a- \epsilon, a+ \epsilon)$, there is a $ x \in N_{v_n} $ such that
\begin{align*} (|\mathrm{det}|_W^j)^{-1} (r) \cap N_{v_n}=(e^{i \theta}( x+ \text{span} \{ v_1, v_2, \cdots, v_{n-1}\})) \cap N_{v_n} \cong S^1 \times D^{2n-3} \quad \text{for} \; \theta \in [0, 2\pi). 
\end{align*}
 
Therefore, the map 
\[ h: S^1 \times D^{2n-3} \times (a- \epsilon, a+ \epsilon) \rightarrow N_{v_n} \] 
is our required homeomorphism.
\end{proof}

Clearly, if a basis $W=\{v_1, \cdots, v_n\}$ is a bifurcation point of $|det|$, then for each $i$, $v_i$ is a bifurcation point of $|\mathrm{det}|^i_W$. Furthermore, $|det|$ descends to a well defined map on the quotient space  $\mathbb{S}_{\mathbb{X}, \neq 0}/ \Bgroup_n $ in the real case and $\mathbb{S}_{\mathbb{X}, \neq 0}/ N (T_n)$ in the complex case.  So, we have the following corollary.

\begin{corollary} \label{detandab} The bifurcation points of $|det|: S_{\mathbb{X}, \neq 0}^n \rightarrow \mathbb{R}$ correspond to the Auerbach bases of $\mathbb{X}$. Furthermore, two distinct bifurcation points of the induced map on $\mathbb{S}_{\mathbb{X}, \neq 0}/ \Bgroup_n $ ( $\mathbb{S}_{\mathbb{X}, \neq 0}/ N (T_n)$ in the complex case) correspond to two distinct equivalence classes of Auerbach bases.
\end{corollary}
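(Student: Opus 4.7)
The plan is to derive the corollary from Theorem \ref{int2auerbach} by establishing the equivalence: $W=(v_1,\dots,v_n) \in S_{\mathbb{X},\neq 0}^n$ is a bifurcation point of $|\det|$ if and only if each coordinate $v_i$ is a bifurcation point of the one-variable restriction $|\det|^i_W \colon S_{\mathbb{X}} \to \mathbb{R}$. Once this equivalence is in hand, Theorem \ref{int2auerbach} immediately gives the first assertion of the corollary.

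The forward implication (already asserted as ``clearly'' just before the statement) follows by slicing. Given a product chart $h \colon S \times (a-\epsilon,a+\epsilon) \to U$ realizing $|\det|$ locally as a projection near $W$, I restrict $U$ to the codimension-$(n-1)$ slice $\{(v_1,\dots,v_{i-1},u,v_{i+1},\dots,v_n) : u \in S_{\mathbb{X}}\}$ passing through $W$. By the very definition of $|\det|^i_W$, the restriction of $|\det|$ to this slice is precisely $|\det|^i_W$, and the restricted $h$ yields a product chart for $|\det|^i_W$ at $v_i$. Hence regularity at $W$ forces regularity of each $v_i$.

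For the converse, I would use Theorem \ref{int2auerbach} to conclude first that the hypothesis forces $W$ to be an Auerbach basis. The proof of that theorem actually shows the stronger statement that each $v_i$ is a \emph{global} maximum of $|\det|^i_W$ on $S_\mathbb{X}$, with value $a = |\det(W)|$. Exploiting the multilinearity of $\det$, any nearby basis $W' = (v_1',\dots,v_n')$ can be written coordinatewise as $v_j' = \sum_i C_{ij} v_i$ with $|C_{ij}| = |v^i(v_j')| \leq \|v^i\|\,\|v_j'\| = 1$, so $|\det(W')| = |\det(C)| \cdot a$. I would then use the coordinate-by-coordinate maximality (swapping one column of $W$ at a time and iterating) to show that the level set $|\det|^{-1}(a+\delta)$ is locally empty near $W$ for all small $\delta > 0$, which precludes any local product structure for $|\det|$ at $W$ and therefore forces $W$ to be a bifurcation point.

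The second statement is a formal consequence of the first: both $\Bgroup_n$ (real case) and $N_{U(n)}(T_n)$ (complex case) act on $S_{\mathbb{X},\neq 0}^n$ preserving $|\det|$, since permutations alter the sign of $\det$ and diagonal matrices with unit-modulus entries multiply $\det$ by a unit scalar. Thus $|\det|$ descends to a continuous function on the quotient, and because these actions are free and properly discontinuous, the quotient maps are covering maps. Since bifurcation is a local condition preserved by local homeomorphisms, bifurcation points in the quotient correspond bijectively to $\Bgroup_n$-orbits (respectively $N_{U(n)}(T_n)$-orbits) of bifurcation points upstairs, which by the first part are precisely the equivalence classes of Auerbach bases. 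The main obstacle I expect is the reverse direction of the bifurcation equivalence: converting coordinate-wise maximality into a genuine topological obstruction to a joint product chart for $|\det|$, since bifurcation is an inherently topological (not differential) notion and must be controlled without smoothness hypotheses on the norm.
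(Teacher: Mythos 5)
Your proposal aims to prove the full equivalence ``$W$ a bifurcation point of $|\det|$ $\iff$ each $v_i$ a bifurcation point of $|\det|^i_W$,'' whereas the paper asserts and uses only one implication — that $W$ being a bifurcation point of $|\det|$ forces each $v_i$ to be a bifurcation point of $|\det|^i_W$ (this is the direction that, via Theorem~\ref{int2auerbach}, makes bifurcation points of $|\det|$ into Auerbach bases, which is what the lower-bound applications in Section~\ref{auerbach} require). Your ``slicing'' argument actually targets the \emph{converse}: starting from a product chart for $|\det|$ at $W$, i.e., from regularity of $W$, you try to deduce regularity of each one-variable restriction at $v_i$ — equivalently ``some $v_i$ a bifurcation $\Rightarrow W$ a bifurcation,'' which is not the implication the paper states. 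Moreover, the slicing step itself is false: the restriction of a topological product chart to a coordinate slice need not be a product chart. For instance $g(x,y)=y+x^2$ is topologically regular at the origin (use the homeomorphism $(s,t)\mapsto(s,\,t-s^2)$), yet its restriction to the slice $\{y=0\}$ is $x\mapsto x^2$, which has a bifurcation at $0$. So your forward argument establishes neither the implication the paper needs nor the one you intend.

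Your second argument (Auerbach basis $\Rightarrow W$ a bifurcation point of $|\det|$) is heuristic and, as you concede, incomplete — and it is also unnecessary, since the paper only needs ``bifurcation $\Rightarrow$ Auerbach.'' The column-by-column comparison does not go through as written because the Auerbach property of $W$ controls the one-variable restrictions of $|\det|$ only at $W$ itself, not at the intermediate mixed tuples $(v_1',\dots,v_k',v_{k+1},\dots,v_n)$ that your iteration produces. Finally, on the descent to the quotient: your reasoning (free properly discontinuous action, covering map, bifurcation preserved by local homeomorphisms) is sound in the real case, but in the complex case $N_{U(n)}(T_n)$ contains the positive-dimensional torus $T_n\cong U(1)^n$, so the quotient $S_{\mathbb{X},\neq 0}^n \to S_{\mathbb{X},\neq 0}^n/N_{U(n)}(T_n)$ is not a covering map and the local-homeomorphism argument does not apply as stated. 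The paper itself only records that $|\det|$ is invariant and descends; it does not supply a covering-space justification, and one cannot be supplied in the complex case.
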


For a Banach space with $C^1$ norm, that is, when $\mathbb{S}_{\mathbb{X}}$ is a $C^1$-manifold, we have a one-to-one correspondence between the critical points of the determinant and the Auerbach bases; see \cite[Proposition~2.1]{MS23} for an independent proof of this fact.
\newline
 The Gram-Schmidt process commutes with the action of $\Bgrouppos_n$ and $N_{U(n)}(T_n)$ in the real and complex case, respectively. So, we have the following; see also \cite[Proposition 16]{WW17}.  
\begin{proposition}\label{transition}
There are homeomorphisms
\begin{align*}
    \mathbb{S}_{\mathbb{X}, \neq 0}/ \Bgroup_n \cong \overline{\Fl}_n(\mathbb{R}) \times D^{ \frac{n(n-1)}{2}} \quad \text{in the real case}, \tag*{and} \\
 \mathbb{S}_{\mathbb{X}, \neq 0}/ N (T_n) \cong \overline{\Fl}_n(\mathbb{C}) \times D^{ n(n-1)} \quad \text{in the complex case}. 
\end{align*}

\end{proposition}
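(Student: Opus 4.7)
The plan is to use the polar decomposition to construct a $\Bgroup_n$-equivariant map from $S^n_{\mathbb{X},\neq 0}$ to $O(n)$ (and analogously for the complex case), and then to trivialise the resulting fibre bundle after taking the quotient. Fix an auxiliary Euclidean (resp.\ Hermitian) inner product on $\mathbb{X}\cong\mathbb{R}^n$ (resp.\ $\mathbb{C}^n$) and identify $S^n_{\mathbb{X},\neq 0}$ with the subspace of $GL_n(\mathbb{R})$ (resp.\ $GL_n(\mathbb{C})$) consisting of matrices whose columns have $\|\cdot\|_{\mathbb{X}}$-norm $1$. Every such matrix admits a unique polar decomposition $g=UP$ with $U$ orthogonal (resp.\ unitary) and $P$ positive-definite symmetric (resp.\ Hermitian). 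Since $gb=(Ub)(b^{-1}Pb)$ is the polar decomposition of $gb$ for any $b\in\Bgroup_n\subset O(n)$ (resp.\ $b\in N_{U(n)}(T_n)$), the retraction $\pi\colon g\mapsto U$ is equivariant with respect to right multiplication and therefore descends to a continuous surjection $\overline{\pi}\colon S^n_{\mathbb{X},\neq 0}/\Bgroup_n\to O(n)/\Bgroup_n=\overline{\Fl}_n(\mathbb{R})$ (resp.\ to $\overline{\Fl}_n(\mathbb{C})$).

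Next, I would identify each fibre of $\overline{\pi}$ with an open disk. The fibre over $[U]$ is $F_U:=\{P\in\mathrm{Pos}_n:\|UP_i\|_{\mathbb{X}}=1\text{ for every }i\}$, a codimension-$n$ submanifold of $\mathrm{Pos}_n$ of real dimension $\binom{n}{2}$ (resp.\ $n(n-1)$ in the complex case). Explicitly, for each $i$ the $i$-th column $v_i=UP_i$ lies in the open hemisphere $H_i:=\{v\in\mathbb{R}^n:\|v\|_{\mathbb{X}}=1,\,\langle v,U_i\rangle>0\}$ of the $\mathbb{X}$-unit sphere, which is homeomorphic to $\mathbb{R}^{n-1}$ via the radial chart $v\mapsto(\langle v,U_j\rangle/\langle v,U_i\rangle)_{j\neq i}$ --- its inverse scales the vector whose off-$i$ coordinates are the prescribed ratios and whose $i$-th coordinate is $1$ to have $\mathbb{X}$-norm $1$. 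The symmetry constraint $P_{ij}=P_{ji}$ cuts these $n(n-1)$ parameters down to $\binom{n}{2}$ independent ones, giving $F_U\cong D^{n(n-1)/2}$.

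Finally, the canonical $\Bgroup_n$-equivariant section $s(U):=U\cdot\mathrm{diag}(\|U_i\|_{\mathbb{X}}^{-1})$, which corresponds to $P=\mathrm{diag}(\|U_i\|_{\mathbb{X}}^{-1})$ and sits at the ``centre'' of every fibre (all off-diagonal ratios vanishing), together with the fibrewise parametrisation above assembles into a $\Bgroup_n$-equivariant homeomorphism $S^n_{\mathbb{X},\neq 0}\cong O(n)\times D^{n(n-1)/2}$. Passing to the quotient by $\Bgroup_n$ proves the real case, and the complex case is treated identically using the Hermitian polar decomposition and $N_{U(n)}(T_n)$ in place of $\Bgroup_n$, with all real dimensions doubled to yield $D^{n(n-1)}$. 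The delicate point is verifying the $\Bgroup_n$-equivariance of the trivialisation: the conjugation action $P\mapsto b^{-1}Pb$ on $\mathrm{Pos}_n$ does not act trivially on the ratios $(P_{ji}/P_{ii})$ (signed permutations multiply off-diagonal ratios by signs), so one must either refine the chart by pairing each ratio with the inherited sign data from the orthogonal frame, or produce the trivialisation more abstractly by deformation-retracting each fibre $\Bgroup_n$-equivariantly onto the section $s$ via the straight-line homotopy $P_t=(1-t)P+t\,\mathrm{diag}(\|U_i\|_{\mathbb{X}}^{-1})$ in $\mathrm{Pos}_n$ followed by a column-wise renormalisation that restores the unit-norm constraints; the key input is that the fibre $F_U$ is equivariantly contractible to $s(U)$.
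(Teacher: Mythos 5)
Your route via the polar decomposition $g = UP$ is a genuine alternative to the paper's (very terse) Gram--Schmidt argument, which is really just a pointer to \cite[Proposition~16]{WW17}, and the equivariance $gb=(Ub)(b^{-1}Pb)$ together with the section $s(U)=U\cdot\mathrm{diag}(\|U_i\|_{\mathbb{X}}^{-1})$ is correctly set up. The gap is in the last step. You observe that the chart-based trivialisation is not equivariant and propose instead to use an equivariant fibrewise deformation retraction onto $s(U)$. But fibrewise contractibility plus a section does \emph{not} imply that a fibre bundle is a product: the open M\"obius band (unit-interval bundle of the tautological line over $S^1$) has contractible fibres and a section yet is not $S^1\times(-1,1)$. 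An equivariant retraction only yields a homotopy equivalence $S^n_{\mathbb{X},\neq 0}/\Bgroup_n \simeq \overline{\Fl}_n(\mathbb{R})$, not the asserted homeomorphism.

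In fact the homeomorphism as stated appears to fail already for $n=2$ with the Euclidean norm, which suggests the gap cannot be closed. Restricting to the component $\{\det>0\}\cong S^1\times(0,\pi)$ (coordinates $\alpha=\theta_1$, $\beta=\theta_2-\theta_1$), one must quotient by $\Bgrouppos_2\cong\mathbb{Z}/4$, whose generator $g\colon(v_1,v_2)\mapsto(v_2,-v_1)$ acts by $(\alpha,\beta)\mapsto(\alpha+\beta,\pi-\beta)$ modulo the sign-change $g^2\colon\alpha\mapsto\alpha+\pi$. Since $g^*(d\theta_1\wedge d\theta_2)=-d\theta_1\wedge d\theta_2$, the generator is orientation-reversing while acting freely, so $\{\det>0\}/\mathbb{Z}_4$ is a non-orientable surface (the open M\"obius band), whereas $\overline{\Fl}_2(\mathbb{R})\times D^1\cong S^1\times(-1,1)$ is orientable. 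Thus what one can actually prove -- and what is used in \eqref{catequality} -- is a homotopy equivalence or a fibration with contractible fibre, which both your retraction argument and the paper's Gram--Schmidt projection do give once one verifies that each fibre is contractible (e.g.\ star-shaped about $s(U)$). You should therefore replace the claimed homeomorphism with a homotopy equivalence in the conclusion.
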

The following is well known; see \cite{Mi}. 
\begin{theorem}
Let $M$ be a path-connected compact manifold. Let
$f : M \to \mathbb{R}$ be a smooth function. The number of critical points of $f$ is not smaller than $\mathrm{rank}(H^*(M))$.
\end{theorem}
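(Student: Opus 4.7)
My plan is to invoke the classical Morse inequalities, essentially as laid out in Milnor's \emph{Morse Theory}. The argument splits into two parts: first, establishing the bound for Morse functions via Milnor's CW decomposition of $M$; second, reducing the general smooth case to the Morse case.

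First I would handle the Morse case. If $f$ is a Morse function on the compact manifold $M$, Milnor's fundamental theorem gives a homotopy equivalence between $M$ and a CW complex with exactly one $\lambda$-cell for each critical point of $f$ of index $\lambda$. Writing $\mu_k(f)$ for the number of index-$k$ critical points, the associated cellular cochain complex (over any coefficient field $\mathbb{F}$) has $k$-th term of $\mathbb{F}$-dimension $\mu_k(f)$, so that $\dim_{\mathbb{F}} H^k(M;\mathbb{F}) \le \mu_k(f)$ since cohomology is a subquotient of cochains. Summing over $k$ yields
\[ \mathrm{rank}\, H^*(M) \;=\; \sum_k \dim_{\mathbb{F}} H^k(M;\mathbb{F}) \;\le\; \sum_k \mu_k(f) \;=\; \#\mathrm{Crit}(f). \]

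For a general smooth $f$, the statement is trivially true when $\#\mathrm{Crit}(f) = \infty$. If on the contrary $f$ has only finitely many (necessarily isolated) critical points, one assigns to each $p \in \mathrm{Crit}(f)$ a local Morse number $m(p) \ge 1$ counting the non-degenerate critical points produced by a generic $C^\infty$-small perturbation of $f$ supported near $p$. Patching these local perturbations produces a global Morse function $\tilde f$ with $\#\mathrm{Crit}(\tilde f) \ge \sum_{p} m(p) \ge \#\mathrm{Crit}(f)$, and the Morse case applied to $\tilde f$ yields the desired inequality.

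The main subtlety is precisely this reduction step, which requires care in ensuring the local perturbations patch coherently to a global Morse function and in tracking how the new non-degenerate critical points arise from the local normal form near each $p$. In the paper's application (Section \ref{auerbach}), however, the function $|\det|$ restricted to the relevant space is known to be Morse for a generic $C^2$-norm, so only the first part of the argument is needed in practice and the reduction may be bypassed altogether.
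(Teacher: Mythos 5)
The paper offers no proof here; it simply cites Milnor's \emph{Morse Theory}, and the first half of your argument (Morse inequalities via the CW decomposition) is precisely what that reference provides. That part is correct and is all the paper actually uses, since in \S\ref{auerbach} the function is Morse for a generic $C^2$ norm.

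The reduction step in your second paragraph, however, has a genuine gap. Your claim that each critical point $p$ carries a ``local Morse number'' $m(p) \ge 1$ is false: a degenerate critical point can be eliminated entirely by a $C^\infty$-small perturbation supported near it. The standard example is a birth--death (fold) singularity, where in local coordinates $f = c + x_1^3 + \sum_{i \ge 2} \pm x_i^2$; adding $\varepsilon x_1$ for small $\varepsilon > 0$ yields a function with no critical point in that neighborhood. So the inequality $\#\mathrm{Crit}(\tilde f) \ge \#\mathrm{Crit}(f)$ does not follow, and the reduction collapses.

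In fact, the theorem as the paper states it --- for an arbitrary smooth $f$ on a compact manifold --- is \emph{false}. On the $2$-torus $T^2$ one can merge the two saddles of the standard tilted-height Morse function into a single degenerate ``monkey saddle,'' producing a smooth function with exactly $3$ critical points, while $\mathrm{rank}\, H^*(T^2;\mathbb{F}) = 4$ over any field $\mathbb{F}$. This is consistent with Lusternik--Schnirelmann theory, which only guarantees $\#\mathrm{Crit}(f) \ge \mathrm{cat}(M)$, and $\mathrm{cat}(T^2) = 3 < 4$. The hypothesis that $f$ be Morse is therefore essential and cannot be dispensed with; the correct formulation of the cited theorem restricts to Morse (or at least nondegenerate) functions. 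Your closing remark --- that only the Morse case is needed for the paper's application --- is exactly right, and the proof should stop there rather than attempt the general reduction.
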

The \textit{Lusternik-Schnirelmann} category $\mathrm{cat}(X)$ of a space $X$ denotes the smallest cardinality of covering of $X$ by open sets which are contractible in $X$. The following theorem is proved in \cite[Theorem A.4]{WW17}. 

\begin{theorem}
Let $M$ be a path-connected metric space that is locally contractible. Let
$f : M \to \mathbb{R}$ be a continuous function. Then the number of bifurcation points of $f$ is not smaller than $\mathrm{cat}(M)$.
\end{theorem}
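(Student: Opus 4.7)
The plan is to prove the contrapositive: assuming $f$ has exactly $k$ bifurcation points $b_1, \ldots, b_k$, I will exhibit an open cover of $M$ by $k$ sets each of which is contractible in $M$, yielding $\mathrm{cat}(M) \leq k$.

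First, I invoke local contractibility to choose for each $i$ an open neighborhood $V_i$ of $b_i$ that is contractible in $M$, arranged to be pairwise disjoint. The complement $K := M \setminus \bigcup_i V_i$ is closed and consists entirely of topologically regular points, so at every $x \in K$ Definition~\ref{bifurcation} provides a product chart $h_x : S_x \times (f(x) - \epsilon_x, f(x) + \epsilon_x) \to W_x$ satisfying $f \circ h_x = \pi_2$. These charts supply canonical local deformations along which $f$ can be strictly decreased, each of which is a homotopy inside a contractible piece of $M$.

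The core step is to patch the local product deformations into a global homotopy $H : M \times [0,1] \to M$ with $H(-,0) = \mathrm{id}_M$, pushing every point along a continuous path on which $f$ is non-increasing, and such that for every $x \in M$ there exists $t_x \in [0,1]$ with $H(x, t_x) \in \bigcup_i V_i$. I would implement this using a partition of unity subordinate to a locally finite refinement of the open cover $\{V_1, \ldots, V_k\} \cup \{W_x\}_{x \in K}$, which is available because $M$, being metric, is paracompact. Once $H$ is constructed, the preimages $U_i := \{x \in M : H(x, t) \in V_i \text{ for some } t \in [0, t_x]\}$ form an open cover of $M$ by $k$ members, and each $U_i$ is contractible in $M$ by concatenating the retraction along $H$ with a null-homotopy of $V_i$ inside $M$.

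The principal obstacle is establishing the termination property: every $H$-trajectory must reach $\bigcup_i V_i$ in finite time rather than drifting forever among regular points. I plan to argue this by exploiting the metric structure to arrange that the local charts $W_x$ admit a uniform positive lower bound on the $f$-decrement achievable per unit deformation time on compact subsets, and then deducing via a Zorn or transfinite argument that the infimum of $f$ along any trajectory must be attained at a topologically regular accumulation point --- which, by a contradiction with the existence of a product chart there, cannot exist. This is the single step where any auxiliary hypothesis (properness of $f$, boundedness of sublevel sets, or local compactness of $M$) effectively enters; the other ingredients --- local contractibility, paracompactness, and the categorical cover construction itself --- are purely formal.
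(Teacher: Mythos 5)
The paper does not prove this result itself; it is quoted from Theorem~A.4 of Weber--Wojciechowski \cite{WW17}, where it is established by the classical Lusternik--Schnirelmann min-max argument: one sets $c_i = \inf\{\sup_A f : A \subseteq M,\ \mathrm{cat}_M(A) \geq i\}$ for $i=1,\dots,\mathrm{cat}(M)$, proves a topological deformation lemma showing each $c_i$ is a bifurcation value, and handles ties $c_i = c_{i+1}$ by showing that the bifurcation set at level $c_i$ has $\mathrm{cat}_M \geq 2$, hence is infinite. Your ``patch local product charts into a global flow and read off a cover'' strategy is a genuinely different route, but the obstacle you flag at the end is not one you can argue around: compactness of $M$ is indispensable, not optional. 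The theorem as literally stated in this paper (metric, locally contractible, path-connected) is in fact false --- take $M = \mathbb{R}$ and $f = \mathrm{id}$; every point has a product chart, so there are no bifurcation points, yet $\mathrm{cat}(\mathbb{R}) = 1$. Weber--Wojciechowski assume $M$ compact and the paper applies the theorem only to compact flag manifolds; the hypothesis was simply dropped in transcription, and you must restore it and use it explicitly rather than treating it as a possibly-helpful extra.

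Even granting compactness, two steps in your construction are under-justified. First, a product chart $h : S \times (a-\epsilon,a+\epsilon) \to W_x$ lets you move $f$ up or down, and there is no canonical global choice of direction (bifurcation points occur at arbitrary levels, so ``always decrease $f$'' cannot drive all of $M$ into $\bigcup_i V_i$). Reconciling the local directions across overlapping charts, so that the patched homotopy remains well-defined and monotone where it needs to be, is exactly what the topological deformation lemma accomplishes --- and it does so one level interval at a time, not globally. Second, the hitting times $t_x$ need not depend continuously on $x$, so ``stop each trajectory when it enters $V_i$'' does not yield a continuous null-homotopy of $U_i$ into $V_i$; you would need a single time $T_i$ after which all of $\overline{U_i}$ lies in $V_i$, which needs both compactness of $\overline{U_i}$ and forward invariance of $V_i$ under $H$, neither of which is addressed. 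The min-max formulation sidesteps both difficulties, which is why it is the standard (and Weber--Wojciechowski's) choice.
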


Let $R$ be a ring. The cup-length $ \ell_{R}(M )$
is the length of the longest sequence of $\alpha_1, \alpha_2, \dots , \alpha_l$ of cohomology classes $\alpha_j \in H^*(M, R)$
of positive degree such that
\[ \alpha_1 \alpha_2 \cdots \alpha_l \neq 0. \]
The following are well known; see \cite{COT03}. 
\begin{theorem}\label{catbound} Let $M$ be a topological space. Then for any ring $R$
\[ \dim(M) +1 \geq \mathrm{cat}(M ) \geq \ell_{R}(M ) +1. \]
\end{theorem}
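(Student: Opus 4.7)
The plan is to prove the two inequalities in Theorem \ref{catbound} independently, since they rely on quite different ideas: the upper bound comes from a covering argument based on a combinatorial decomposition of $M$, while the lower bound comes from a cohomological argument exploiting the naturality of the relative cup product.

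For the upper bound $\dim(M)+1 \geq \mathrm{cat}(M)$, I would first reduce to the case where $M$ carries a triangulation (or CW structure) of dimension $n := \dim(M)$, which is automatic for the manifolds and CW complexes relevant to the paper. For each $i \in \{0, 1, \ldots, n\}$, define $V_i$ to be the union of small pairwise-disjoint open neighborhoods of the barycenters of the $i$-dimensional simplices, chosen so that each connected component deformation retracts onto the interior of a single simplex. Every component of $V_i$ is then contractible, and because $M$ is assumed path-connected the inclusion $V_i \hookrightarrow M$ is null-homotopic. Consequently, $\{V_0, \ldots, V_n\}$ is a categorical open cover of $M$ with $n+1$ elements, yielding $\mathrm{cat}(M) \leq n+1$.

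For the lower bound $\mathrm{cat}(M) \geq \ell_R(M)+1$, consider an optimal categorical open covering $M = U_1 \cup \cdots \cup U_k$ with $k = \mathrm{cat}(M)$, where each inclusion $\iota_j \colon U_j \hookrightarrow M$ is null-homotopic. The induced map $\iota_j^* \colon H^{>0}(M; R) \to H^{>0}(U_j; R)$ then vanishes, so by the long exact sequence of the pair $(M, U_j)$ every positive-degree class $\alpha_j \in H^{>0}(M; R)$ admits a lift $\tilde{\alpha}_j \in H^*(M, U_j; R)$. By the naturality of the relative cup product, the product $\tilde{\alpha}_1 \cup \cdots \cup \tilde{\alpha}_k$ lies in $H^*(M, U_1 \cup \cdots \cup U_k; R) = H^*(M, M; R) = 0$ and restricts to $\alpha_1 \cup \cdots \cup \alpha_k$ under $H^*(M, M; R) \to H^*(M; R)$. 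Hence any length-$k$ product of positive-degree classes vanishes, so $\ell_R(M) + 1 \leq k = \mathrm{cat}(M)$.

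The main technical subtlety is ensuring that the relative cup product is well defined on the covering $\{U_j\}$, which requires the cover to be excisive; for open covers this follows from standard excision arguments. The upper bound also requires sufficient regularity of $M$ (e.g.\ an ANR or CW structure), but this hypothesis is satisfied throughout the paper. Beyond these routine checks the argument is essentially formal, and neither step should present a genuine obstacle.
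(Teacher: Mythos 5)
The paper does not actually prove this statement---it simply cites it as ``well known'' from \cite{COT03}---so there is no in-paper argument to compare against. Your proof is correct, and both halves follow the standard textbook route found in the cited reference: the cup-length lower bound via lifting classes to relative cohomology and pushing the product into $H^*(M,M)=0$ is exactly the Froloff--Elsholz/Eilenberg argument, and the dimension upper bound via a combinatorial cover indexed by dimension is the classical construction.

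Two small points worth tightening. First, in the upper bound your phrase ``small pairwise-disjoint open neighborhoods of the barycenters'' does not by itself yield a cover of $M$; what you want is the union, over $i$-simplices $\sigma$, of the open stars $\mathrm{St}(\hat\sigma)$ of the barycenters in the barycentric subdivision. These are pairwise disjoint for fixed $i$, each is contractible, and their union over all $i$ is all of $M$ because every point lies in the open star of the barycenter of the unique open simplex containing it. Your parenthetical about deformation retracting to the interior of a single simplex shows you have the right picture in mind, but as literally written the ``small neighborhoods'' need not cover. Second, and as you correctly flag, the theorem as stated (``Let $M$ be a topological space'') is slightly too generous: the upper bound $\mathrm{cat}(M)\le\dim(M)+1$ requires $M$ to be path-connected and to admit some regularity (CW, paracompact ANR, or at least normal with finite covering dimension); without path-connectedness even $\mathrm{cat}(S^0)=2>\dim(S^0)+1$ fails. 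Since in this paper $M$ is always a closed manifold, the hypotheses are satisfied, but the theorem statement is imprecise and your caveat is the right one to raise.
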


Let $f: M_1 \to M_2$ be a finite covering of topological spaces, then 
\[ \mathrm{cat}(M_2) \geq \mathrm{cat}(M_1). \]

Note that ${\Fl}_n (\mathbb{R}) \rightarrow \overline{\Fl}_n (\mathbb{R})$ is a finite covering. Thus, from Theorem \ref{cohoflagreal} and \ref{catbound} and the above discussion
\[ \mathrm{cat}\big(\overline{\Fl}_n(\mathbb{R}) \big)= \frac{n(n-1)}{2} +1= \dim (\overline{\Fl}_n(\mathbb{R})) . \]
So, we have the optimum estimate for $\mathrm{cat}\big(\overline{\Fl}_n(\mathbb{R})$. 

From Proposition \ref{rationalcoho} and Proposition 3D.4 in \cite{AT}, one obtains that 
\[\text{rank}(H^*(\overline{\Fl}_n(\mathbb{R}); \mathbb{Q})= 2^{[n/2]}.\]
Then a proof of the second part of Theorem \ref{weberestimate}, in the real case, essentially follows from the above discussion.
\newline 
In the complex case, from Theorem \ref{cohoflagcom}, \ref{catbound} and the above discussion, we have

\begin{equation}\label{catgap}
\dim (\overline{\Fl}_n(\mathbb{C}))= n(n-1) \geq  
\mathrm{cat} \big(\overline{\Fl}_n(\mathbb{C}) \big) \geq \frac{n(n-1)}{2} +1.
\end{equation}
The first part of Theorem \ref{weberestimate}, in the complex case, follows from this.  
From our computational result in Corollary \ref{cohoring3} and Theorem \ref{catbound}, we obtain 
\begin{equation}\label{cat7}
\mathrm{cat}\big(\overline{\Fl}_3(\mathbb{C})) \geq \ell_{\mathbb{F}_2}(\overline{\Fl}_3(\mathbb{C})) +1 = 7= \dim (\overline{\Fl}_3(\mathbb{C})) +1 .
\end{equation}
This implies 
\[ \mathrm{cat}\big(\overline{\Fl}_3(\mathbb{C})) = \dim (\overline{\Fl}_3(\mathbb{C})) +1 \]
So, in this case, the maximum possible value for the category specified by Equation \ref{catgap} has been achieved. 

The proof of the complex part of the second statement of Theorem \ref{weberestimate} was achieved in \cite[Corollary 18-19]{WW17}, using a similar argument as in the real case, specifically through estimating the rational cohomology of the finite covering ${\Fl}_n(\mathbb{C})$ of $\overline{\Fl}_n(\mathbb{C})$. However, according to Theorem \ref{rationalcoho}, this argument is not applicable. Instead, in this case, the mod-2 cohomology of $\overline{\Fl}_n(\mathbb{C})$ provides a more accurate estimate.

To summarize, we have the following improvements of Theorem \ref{weberestimate} for lower dimensions. 
\begin{corollary} \label{improvedestimate}
 1) In every $3$-dimensional complex Banach space there exists at least $7$ different Auerbach bases. 
\newline
2) For an open (in the sense of Hausdorff distance) dense set of Banach spaces of dimension $4$ and $5$ with
smooth (continually twice differentiable) norm, there exist at least $12$ and
$28$ different Auerbach bases, respectively.
\end{corollary}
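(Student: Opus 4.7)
The plan is to combine the cohomological computations already established (Corollary \ref{cohoring3}, Theorem \ref{poincare4} and Theorem \ref{poincare5}) with the general principles recalled in this section: the identification of Auerbach bases with bifurcation points of $|\det|$ on $S_{\mathbb{X},\neq 0}^n$ modulo the relevant symmetry group (Corollary \ref{detandab}), the homeomorphisms in Proposition \ref{transition} that reduce the problem to unordered flag manifolds, and the two Lusternik--Schnirelmann / Morse inequalities bounding bifurcation and critical points from below by $\mathrm{cat}$ and $\mathrm{rank}\, H^*$, respectively.

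For part (1), I would argue as follows. By Corollary \ref{cohoring3}, $H^*(\overline{\Fl}_3(\mathbb{C}); \mathbb{F}_2)\cong \mathbb{F}_2[\gamma]/(\gamma^7)$, so the mod-$2$ cup-length is $\ell_{\mathbb{F}_2}(\overline{\Fl}_3(\mathbb{C}))=6$. Theorem \ref{catbound} then yields $\mathrm{cat}(\overline{\Fl}_3(\mathbb{C}))\geq 7$, and since $\dim \overline{\Fl}_3(\mathbb{C})=6$ this is in fact an equality. The homeomorphism $S_{\mathbb{X},\neq 0}^n/N_{U(n)}(T_n)\cong \overline{\Fl}_3(\mathbb{C})\times D^{6}$ from Proposition \ref{transition}, together with the homotopy invariance of category and the Weber--Wojciechowski bifurcation bound, gives at least $7$ bifurcation points for $|\det|$ on the quotient. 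By Corollary \ref{detandab} these correspond to $7$ pairwise inequivalent Auerbach bases.

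For part (2), I would invoke instead the Morse-theoretic bound for generic $C^2$ norms. For such norms the Auerbach bases are exactly the critical points of $|\det|$ on $S_{\mathbb{X},\neq 0}^n/\Bgroup_n$ (see \cite{MS23}), and by the standard Morse inequality their number is at least the total rank of the mod-$2$ cohomology of this quotient. Using Proposition \ref{transition} once more, this rank equals $\mathrm{rank}\, H^*(\overline{\Fl}_n(\mathbb{R}); \mathbb{F}_2)$, which by Theorem \ref{poincare4} evaluates to $\Pi_{\overline{\Fl}_4(\mathbb{R})}(1)=1+1+2+4+2+1+1=12$ for $n=4$, and by Theorem \ref{poincare5} to $\Pi_{\overline{\Fl}_5(\mathbb{R})}(1)=1+1+2+4+4+4+4+4+2+1+1=28$ for $n=5$.

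All the serious work has already been done: the computation of the cohomology ring of $\overline{\Fl}_3(\mathbb{C})$ in Section \ref{sec:complex} and the Poincaré series of $\overline{\Fl}_4(\mathbb{R})$ and $\overline{\Fl}_5(\mathbb{R})$ in Section \ref{sec:sss}. The only remaining task is to quote these together with the general framework of \S \ref{auerbach} and simply evaluate the numerical bounds, so there is no real obstacle here; the only point requiring care is ensuring that the arguments of \cite{WW17} based on $\mathrm{cat}$ (for bifurcation points) and those of \cite{MS23, Mi} based on rank (for critical points in the smooth generic case) apply verbatim to $S_{\mathbb{X},\neq 0}^n/N_{U(n)}(T_n)$ in the complex case, which is guaranteed by Proposition \ref{transition}.
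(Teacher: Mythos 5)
Your proposal is correct and follows essentially the same route as the paper: the paper establishes the cup-length bound $\mathrm{cat}(\overline{\Fl}_3(\mathbb{C}))=7$ via Corollary \ref{cohoring3} and Theorem \ref{catbound}, and for $n=4,5$ relies on evaluating the Poincar\'e series of Theorems \ref{poincare4} and \ref{poincare5} at $t=1$ together with Proposition \ref{transition} and the Morse-theoretic rank bound, exactly as you do. The paper itself does not write out a proof but simply summarizes after laying out this framework; your write-up makes the logic explicit without introducing any new ideas or gaps.
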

 It is known that (cf. \cite{sob64}) if $K$ is an $n$-dimensional convex body then any Auerbach basis of a Banach space with the unit ball $K - K= \{x - y| x, y \in K\}$ corresponds to the system of conjugate affine diameters of $K$. Thus, the above corollary also improves the estimate for the number of different systems of conjugate affine diameters in the respective cases. 

Thus, in conclusion, one can expect that the estimate for Auerbach bases given by Theorem \ref{weberestimate} can be much improved in higher dimensions as well.

\section*{Acknowledgement} The first author acknowledges the MIUR Excellence Department Project awarded to the Department of Mathematics, University of Rome Tor Vergata, CUP E83C18000100006. The third author was partially supported by Science and Engineering Research Board (SERB) grant (CRG/2019/007028) for the year 2020-2021. 

\bibliographystyle{amsplain}
\bibliography{refile}

\appendix
\section{Appendix} \label{appendix} 
For the reader's convenience, here we provide the details of the computations of $H^*(\Bgrouppos_n)$ given in Table \ref{basis table}. We recall from Lemma \ref{connectinghom}, the restriction $ \rho_n \colon H^*(\Bgroup_n) \to H^*(\Bgrouppos_n) $ induces an injective map
\[
\overline{\rho}_n \colon \frac{H^*(\Bgroup_n)}{(\gamma_{1, 1} \odot 1_1 + \alpha_1 \odot 1_2)} \to H^*(\Bgrouppos_n), 
\]
with cokernel given by the annihilator ideal of $ \gamma_{1, 1} \odot 1_1 + \alpha_1 \odot 1_2 $.

In light of this, in order to describe cohomologies of $\Bgrouppos_3$, we first compute the additive generators of the ideal $ \ker(\rho_3) $. Let $x=\gamma_{1, 1} \odot 1_{1} + \alpha_1 \odot 1_{2}$. We compute below $x \cdot y$, for all possible elements of the basis of Hopf monomials (see \ref{basis B}) $y$. 
\newline
For $y=(\alpha_1^l\odot \alpha^m_2)$, 
\[ x\cdot (\alpha^l_1 \odot \alpha_2^m)= \alpha_1^l \odot \gamma_{1, 1} \alpha_2^m + (\alpha_1 \odot 1_2)\cdot ( \alpha^l_1 \odot \alpha_2^m) . \]
Using Theorem~\ref{cohob}, 
\begin{gather*} \Delta (\alpha_1 \odot 1_2)= \Delta \alpha_1 \odot \Delta 1_2= (\alpha_1 \otimes 1_0 + 1_0 \otimes \alpha_1) \odot ( 1_2 \otimes 1_0 +1_1 \otimes 1_1 +1_0 \otimes 1_2 ) \\
=(\alpha_1 \odot 1_2) \otimes 1_0 + (\alpha_1 \odot 1_1) \otimes 1_1 + \alpha_1 \otimes 1_2 + 1_2 \otimes \alpha_1 + 1_1 \otimes (\alpha_1 \odot 1_1) + 1_0 \otimes (\alpha_1 \odot 1_2) .
\end{gather*}
Using the Hopf ring distributivity and the fact that the cup product of terms in different components is zero, the second summand in the above expression becomes
\[ (\alpha_1 \odot 1_2)\cdot(\alpha^l_1 \odot \alpha_2^m)= 
\alpha^{l+1}_1 \odot \alpha_2^m + \alpha^l_1 \odot \alpha^{m+1}_1 \odot \alpha^{m}_1 . \]

Similarly
\begin{gather*}
\Delta(1_1 \odot \gamma_{1, 1}) = 1_0 \otimes (1_1 \odot \gamma_{1, 1}) + 1_1 \otimes \gamma_{1, 1} + \gamma_{1, 1} \otimes 1_1 + (1_1 \odot \gamma_{1, 1}) \otimes 1_0 \\ \mbox{ and} \quad
(1_1 \odot \gamma_{1, 1}) \cdot y = \alpha_1^l \odot \gamma_{1, 1} \alpha_2^m .
\end{gather*}

Thus
\[ x\cdot ( \alpha_1^l \odot \alpha^m_2)= \alpha_1^l \odot \gamma_{1, 1} \alpha_2^m + \alpha_1^{l+1} \odot \alpha_2^{m} + \alpha^l_1 \odot \alpha^{m+1}_1 \odot \alpha^{m}_1 . \]

Similarly, we obtain 
\[ x \cdot ( \alpha^l_1 \odot \gamma^m_{1, 1} \alpha_2^n)= \alpha^l_1 \odot \gamma^{m+1}_{1, 1} \alpha_2^n + \alpha^{l+1}_1 \odot \gamma^{m}_{1, 1} \alpha_2^n , \]

and
\[ x \cdot (\alpha_1^l \odot \alpha_1^m \odot \alpha_1^n) = \alpha_1^{l+1} \odot \alpha_1^m \odot \alpha_1^n + \alpha_1^l \odot \alpha_1^{m+1} \odot \alpha_1^n + \alpha_1^l \odot \alpha_1^m \odot \alpha_1^{n+1} \]

and lastly
\[ x \cdot \alpha^m_3= \gamma_{1, 1} \alpha^m_2 \odot \alpha_1^{m} + \alpha^{m+1}_1 \odot \alpha_2^m . \]

In order to describe cohomologies of $\Bgrouppos_4$, we first compute the additive generators of the ideal $\ker(\rho_4)$. Let $x=\gamma_{1, 1} \odot 1_{2} + \alpha_1 \odot 1_{3}$. We compute below $x \cdot y$, for all Hopf monomials $y$. The calculations are essentially the same as those for $ \ker(\rho_3) $.

We first compute the coproduct of $ x $ using the Hopf algebra structure:
\begin{gather*}
\Delta(x) = \Delta(\alpha_1) \odot \Delta(1_3) + \Delta(\gamma_{1, 1}) \odot \Delta(1_2) = (\alpha_1 \odot 1_3) \otimes 1_0 + (\alpha_1 \odot 1_1) \otimes 1_2 + (\alpha_1 \odot 1_2) \otimes 1_1 \\
+ \alpha_1 \otimes 1_3 + 1_3 \otimes \alpha_1 + 1_1 \otimes (\alpha_1 \odot 1_2) + 1_2 \otimes (\alpha_1 \odot 1_1)+ 1_0 \otimes (\alpha_1 \odot 1_3) + 1_2 \otimes \gamma_{1, 1} \\
+ 1_0 \otimes (\gamma_{1, 1} \odot 1_2) + 1_1 \otimes (1_1 \odot \gamma_{1, 1}) + (1_1 \odot \gamma_{1, 1}) \otimes 1_1 + (\gamma_{1, 1} \odot 1_2) \otimes 1_0 + 1_2 \otimes \gamma_{1, 1} .
\end{gather*}

Using the coproduct computation above and Hopf ring distributivity we obtain the cup product of $ x $ and all possible Hopf monomials $ y $ in degree at most $ 3 $:
\begin{gather*}
x \cdot 1_4 = x, x \cdot (\gamma_{1, 1}^k \odot \gamma_{1, 1}^l) = \gamma_{1, 1}^{k+1} \odot \gamma_{1, 1}^l + \gamma_{1, 1}^k \odot \gamma_{1, 1}^{l+1} \mbox{ for } l < k, \\
x \cdot (\gamma_{1, 1}^k \odot \alpha_1^l \odot \alpha_1^m \odot 1_1) = \gamma_{1, 1}^{k+1} \odot \alpha_1^l \odot \alpha_1^m \odot 1_1 + \gamma_{1, 1}^k \odot \alpha_1^{l+1} \odot \alpha_1^m \odot 1_1 + \gamma_{1, 1}^k \odot \alpha_1^l \odot \alpha_1^{m+1} \odot 1_1 \\
+ \gamma_{1, 1}^k \odot \alpha_1^l \odot \alpha_1^m \odot \alpha_1 \mbox{ for } m < l, x \cdot \gamma_{1, 2} = \gamma_{1, 1}^2 \odot \gamma_{1, 1}, x \cdot (\alpha_2 \odot 1_2) = \alpha_1^2 \odot \alpha_1 \odot 1_2 + \alpha_3 \odot 1_1 \\
+ (\gamma_{1, 1} \alpha_2) \odot 1_2, x \cdot (\alpha_3 \odot 1_1) = \alpha_1^2 \odot \alpha_2 \odot 1_1 + (\gamma_{1, 1}\cdot \alpha_2) \odot \alpha_1 \odot 1_1, x \cdot \gamma_{2, 1} = 0.
\end{gather*}

Finally, in $ H^*(\Bgroup_5) $, the same calculations yield the following formulas for the cup product of $ x = \gamma_{1, 1} \odot 1_3 + \alpha_1 \odot 1_4 $ with Hopf monomials $ y $ of degree at most $ 5 $:
\begin{gather*}
x \cdot 1_5 = x, x \cdot (\gamma_{1, 1}^k \odot \gamma_{1, 1}^l \odot \alpha_1^m) = \gamma_{1, 1}^{k+1} \odot \gamma_{1, 1}^l \odot \alpha_1^m + \gamma_{1, 1}^k \odot \gamma_{1, 1}^{l+1} \odot \alpha_1^m + \gamma_{1, 1}^k \odot \gamma_{1, 1}^l \odot \alpha_1^{m+1} \mbox{ for } k > l, \\
x \cdot (\gamma_{1, 2}^k \odot \alpha_1^l) = \gamma_{1, 1}^{k+1} \odot \gamma_{1, 1}^k \odot \alpha_1^l + \gamma_{1, 2}^k \odot \alpha_1^{l+1}, x \cdot (\gamma_{1, 1}^k \odot \alpha_1^l \odot \alpha_1^m \odot \alpha_1^h) = \gamma_{1, 1}^{k+1} \odot \alpha_1^l \odot \alpha_1^m \odot \alpha_1^h \\
+ \gamma_{1, 1}^k \odot \alpha_1^{l+1} \odot \alpha_1^m \odot \alpha_1^h + \gamma_{1, 1}^k \odot \alpha_1^l \odot \alpha_1^{m+1} \odot \alpha_1^h + \gamma_{1, 1}^k \odot \alpha_1^l \odot \alpha_1^m \odot \alpha_1^{h+1} \mbox{ for } l > m > h, \\
x \cdot (\gamma_{1, 1}^k \odot \alpha_2^l \odot 1_1) = \gamma_{1, 1}^{k+1} \odot \alpha_2^l \odot 1_1 + \gamma_{1, 1}^k \odot (\gamma_{1, 1} \alpha_2^l) \odot 1_1 + \gamma_{1, 1}^k \odot \alpha_1^{l+1} \odot \alpha_1^l \odot 1_1 + \gamma_{1, 1}^k \odot \alpha_2^l \odot \alpha_1, \\
x \cdot (\gamma_{1, 1}^k \odot \alpha_3) = \gamma_{1, 1}^{k+1} \odot \alpha_3 + \gamma_{1, 1}^k \odot (\gamma_{1, 1} \alpha_2) \odot \alpha_1 + \gamma_{1, 1}^k \odot \alpha_1^2 \odot \alpha_2, x \cdot (\gamma_{1, 1} \odot \alpha_1^2 \odot \alpha_2) = \gamma_{1, 1}^2 \odot \alpha_1^2 \odot \alpha_2 \\
+ \gamma_{1, 1} \odot (\gamma_{1, 1}\alpha_2) \odot \alpha_1 + \gamma_{1, 1} \odot \alpha_1^3 \odot \alpha_2, x \cdot ((\gamma_{2, 1}\gamma_{1, 2}^k) \odot \alpha_1^l) = (\gamma_{2, 1}\gamma_{1, 2}^k) \odot \alpha_1^{l+1}.
\end{gather*}

\begin{lemma} \label{restriction-map}
1. Up to degree $ 3 $, the map $ \overline{\rho_3} $ is an isomorphism. \newline
2. In degrees $ 0, 1$, and $2 $ the map $ \overline{\rho_4} $ is surjective. In degree $ 3 $, its cokernel is $ 1 $-dimensional. \newline
3. In degrees up to $ 5 $, the map $ \overline{\rho_5} $ is an isomorphism.
\end{lemma}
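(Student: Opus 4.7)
The plan is to use the Gysin sequence \eqref{Gysin} to reduce each statement to a linear algebra computation. Since $ \overline{\rho_n} $ is already injective (its kernel in the un-quotiented source is precisely the image of $e \cdot \_$, which has been modded out) with cokernel in degree $ d $ isomorphic to the annihilator $ \mathrm{Ann}_d(e) := \ker(e \cdot \_ \colon H^d(B\Bgroup_n) \to H^{d+1}(B\Bgroup_n)) $, each claim reduces to a computation of this annihilator. Parts 1 and 3 require its vanishing in the relevant range, while part 2 requires vanishing in degrees $ \leq 2 $ and a one-dimensional annihilator in degree $ 3 $.

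For each pair $ (n, d) $ appearing in the lemma, I would first list the Hopf monomial basis of $ H^d(B\Bgroup_n) $ from Definition \ref{basis B}, and then apply the formulas for $ e \cdot y $ on each basis element $ y $ that were derived above from the explicit coproduct $ \Delta(e) $ and Hopf ring distributivity. Each such calculation yields a matrix whose columns record the expansion of $ e \cdot y $ in the Hopf monomial basis of $ H^{d+1}(B\Bgroup_n) $, and $ \mathrm{Ann}_d(e) $ is the kernel of this matrix.

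For part 1, the relevant Hopf monomials in $ H^d(B\Bgroup_3) $ for $ d \leq 3 $ have the forms $ \alpha_1^l \odot \alpha_2^m $, $ \alpha_1^l \odot \gamma_{1, 1}^m \alpha_2^n $, $ \alpha_1^l \odot \alpha_1^m \odot \alpha_1^n $, and $ \alpha_3 $. The identities displayed just above show that for distinct such $ y $ the expansions of $ e \cdot y $ have distinct leading Hopf monomials (e.g.\ $ e \cdot (\alpha_1^l \odot \alpha_2^m) $ contributes the previously absent term $ \alpha_1^l \odot \gamma_{1, 1}\alpha_2^m $), giving linear independence of the images in each degree $ 0 $ through $ 3 $ and hence $ \mathrm{Ann}_d(e) = 0 $. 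For part 2 the same procedure is applied with $ e = \gamma_{1, 1} \odot 1_2 + \alpha_1 \odot 1_3 $; the crucial identity is $ e \cdot \gamma_{2, 1} = 0 $, which immediately exhibits a non-trivial element in $ \mathrm{Ann}_3(e) $ whose image in $ H^3(B\Bgrouppos_4) $ is the class $ t $ of Proposition \ref{cohomology Bpos}. A direct inspection of the table of formulas confirms that no other linear combination of degree $ 3 $ Hopf monomials is annihilated by $ e $, so the cokernel is exactly one-dimensional, and for degrees $ 0, 1, 2 $ linear independence is immediate.

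Part 3 follows the same strategy with $ e = \gamma_{1, 1} \odot 1_3 + \alpha_1 \odot 1_4 $ in $ H^*(B\Bgroup_5) $. The main obstacle will be bookkeeping: the Hopf monomial basis in degrees $ 4 $ and $ 5 $ grows rapidly and includes exotic elements such as $ (\gamma_{1, 1}\alpha_2^2) \odot \alpha_2 \odot \alpha_1 $ and $ \gamma_{2, 1}\gamma_{1, 2} \odot \alpha_1 $, all of which must be enumerated consistently with the profile condition of Definition \ref{basis B}. Relations such as $ e \cdot (\gamma_{2, 1}\gamma_{1, 2}^k \odot \alpha_1^l) = (\gamma_{2, 1}\gamma_{1, 2}^k) \odot \alpha_1^{l+1} $ are representative of the identities that prevent these exotic classes from contributing to $ \mathrm{Ann}_d(e) $, and verifying linear independence uniformly across all such families constitutes the tedious but mechanical core of the argument.
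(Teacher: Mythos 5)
Your proposal is correct and follows essentially the same route as the paper's: reduce via the Gysin sequence to the computation $\dim\coker(\overline{\rho}_n) = \dim\ker(e\cdot\_)$, then expand $e\cdot y$ on the Hopf monomial basis in each relevant degree using the explicit coproduct and distributivity formulas, and read off the kernel of the resulting matrix (with the key identity $e\cdot\gamma_{2,1}=0$ for $n=4$, degree $3$). The only minor imprecision worth flagging is the phrase claiming $\gamma_{2,1}$ has ``image'' $t$ in $H^3(B\Bgrouppos_4)$: the class $t$ is detected by the transfer $\tau_4(t)=\gamma_{2,1}$, not produced as $\rho_4(\gamma_{2,1})$ (indeed $t$ lies precisely in $\coker(\rho_4)$), but this does not affect the argument.
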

\begin{proof}
The Gysin sequence
\[\cdots \rightarrow H^{k-1}(\Bgroup_n) \xrightarrow{\partial} H^k(\Bgroup_n) \xrightarrow{\rho_n} H^k(\Bgrouppos_n) \xrightarrow{\tau_n}H^k(\Bgroup_n) \xrightarrow{\partial}H^{k+1}(\Bgroup_n) \rightarrow \cdots\]
splits into the short exact sequences
\[
0 \rightarrow \frac{H^k(\Bgroup_n)}{(e)} \rightarrow H^*(\Bgrouppos_n) \rightarrow \ker(e \cdot \_ \colon H^k(\Bgroup_n) \to H^{k+1}(\Bgroup_n)) \rightarrow 0.
\]
Therefore, $ \dim(\mathrm{coker}(\overline{\rho}_n)) = \dim (\ker(e \cdot \_)) $.

Let $ n = 3 $, we need to prove that multiplication by $ e = \gamma_{1, 1} \odot 1_1 + \alpha_1 \odot 1_2 $ is injective in degrees $ 0, 1, 2, 3 $. This is true in every degree, but we do not need this more general statement.

Listing all Hopf monomials in the right degrees we obtain the following additive bases:
\begin{gather*}
H^0(\Bgroup_3) = \mathbb{F}_2 \{ 1_3 \}, \quad H^1(\Bgroup_3) = \mathbb{F}_2 \{ \gamma_{1, 1} \odot 1_1, \alpha_1 \odot 1_2 \}, \\
H^2(\Bgroup_3) = \mathbb{F}_2 \{ \gamma_{1, 1}^2 \odot 1_1, \gamma_{1, 1} \odot \alpha_1, \alpha_2 \odot 1_1, \alpha_1^2 \odot 1_2 \}, \\
H^3(\Bgroup_3) = \mathbb{F}_2 \{\gamma_{1, 1}^3 \odot 1_1, \gamma_{1, 1}^2 \odot \alpha_1, \gamma_{1, 1} \odot \alpha_1^2, 1_2 \odot \gamma_1^3, \alpha_3, \alpha_1^2 \odot \alpha_1 \odot 1_1\} .
\end{gather*}
It can be seen by direct computation that the multiplication by the class $ x $ above is injective in these degrees.
\newline
For $ n = 4 $, a similar argument that uses the cup product calculations above shows that multiplication by $ \gamma_{1, 1} \odot 1_2 + \alpha_1 \odot 1_3 $ is injective in degrees $ 0, 1, 2 $, that its kernel in degree $ 3 $ is generated by $ \gamma_{2, 1} $. Similarly, for $ n = 5 $, multiplication by $ \gamma_{1, 1} \odot 1_3 + \alpha_1 \odot 1_4 $ is injective up to degree $ 5 $.
\end{proof}
With the help of the above computations and the lemma, we obtain the following.
\begin{align*}
 H^0(\Bgrouppos_3) &= \mathbb{F}_2, \quad H^1(\Bgrouppos_3) = \mathbb{F}_2 \{ \gamma_{1, 1} \odot 1_1 \}, \\
 H^2(\Bgrouppos_3) &= \mathbb{F}_2 \{ \gamma^2_{1, 1} \odot 1_1= \gamma_{1, 1} \odot \alpha_1, \alpha_2 \odot 1_1 \}, \\
 H^3(\Bgrouppos_3) &= \mathbb{F}_2 \{ \gamma^3_{1, 1} \odot 1_1, \gamma^2_{1, 1} \odot \alpha_1, 1_1 \odot \gamma_{1, 1} \alpha_2 =\alpha_1 \odot \alpha_2 = \alpha_3 \} .
\end{align*}

Likewise, the cohomology groups of $\Bgrouppos_4$ in lower degrees are described below.
\begin{align*}
 H^0(\Bgrouppos_4) &= \mathbb{F}_2 , \quad H^1(\Bgrouppos_4) = \mathbb{F}_2 \{ \gamma_{1, 1} \odot 1_2 \}, \\
 H^2(\Bgrouppos_4) &= \mathbb{F}_2 \{ \gamma^2_{1, 1} \odot 1_2, \gamma_{1, 1} \odot \alpha_1 \odot 1_1, \gamma_{1, 2}, \gamma_{2, 1}, \alpha_2 \odot 1_2 \}, \\
 H^3(\Bgrouppos_4) &= \mathbb{F}_2 \{ \gamma^3_{1, 1} \odot 1_2, \gamma_{1, 1} \odot \alpha^2_1 \odot 1_1=\gamma^2_{1, 1} \odot \alpha_1 \odot 1_1, \gamma_{1, 1} \odot \alpha_2 , \gamma_{1, 1} \alpha_2 \odot 1_2= \alpha_3 \odot 1_1, t \} .
\end{align*}
where $ t $ is a generator of the cokernel of $ \overline{\rho}_4 \colon H^3(\Bgroup_4) \to H^3(\Bgrouppos_4) $. Finally, cohomology groups of $ \Bgrouppos_5 $ up to degree six are given below.
\begin{align*}
 H^0(\Bgrouppos_5) &= \mathbb{F}_2, \quad H^1(\Bgrouppos_5) = \mathbb{F}_2 \{ \gamma_{1, 1} \odot 1_3 \} , \quad H^2(\Bgrouppos_5) = \mathbb{F}_2 \{ \gamma_{1, 1}^2 \odot 1_3, \gamma_{1, 2} \odot 1_1 \}, \\
 H^3(\Bgrouppos_5) &= \mathbb{F}_2 \{ \gamma_{1, 1}^3 \odot 1_3, \gamma_{1, 1}^2 \odot \gamma_{1, 1} \odot 1_1, \gamma_{1, 1} \odot \alpha_2 \odot 1_1, (\gamma_{1, 1}\alpha_2) \odot 1_3, \alpha_3 \odot 1_2 \}, \\
 H^4(\Bgrouppos_5) &= \mathbb{F}_2 \{ \gamma_{1, 1}^4 \odot 1_3, \gamma_{1, 1}^3 \odot \gamma_{1, 1} \odot 1_1, \gamma_{1, 2}^2 \odot 1_1, (\gamma_{1, 1}^2 \alpha_2) \odot 1_1, \gamma_{1, 1}^2 \odot \alpha_2 \odot 1_1, \\
 & \quad \quad \gamma_{1, 1} \odot \alpha_3, \alpha_2^2 \odot 1_3, \alpha_4 \odot 1_1 \} , \\
 H^5(\Bgrouppos_5) &= \mathbb{F}_2 \{ \gamma_{1, 1}^5 \odot 1_3, \gamma_{1, 1}^4 \odot \gamma_{1, 1} \odot 1_1, \gamma_{1, 1}^3 \odot \gamma_{1, 1}^2 \odot 1_1, (\gamma_{1, 1}^3 \alpha_2) \odot 1_3, \gamma_{1, 1}^3 \odot \alpha_2 \odot 1_1, \\
 & \quad \quad \gamma_{1, 1}^2 \odot \alpha_3, (\gamma_{1, 1} \alpha_2^2) \odot 1_3, (\gamma_{1, 1} \alpha_2) \odot \alpha_2 \odot 1_1, \gamma_{1, 1} \odot \alpha_2^2 \odot 1_1, \alpha_2^2 \odot \alpha_1 \odot 1_1, \alpha_5 \} .
\end{align*}

\end{document}